  \theoremstyle{plain}
    \newtheorem{thm}{Theorem}[section]
    \newtheorem{prop}[thm]{Proposition}
    \newtheorem{subsec}[thm]{}
\theoremstyle{definition}
    \newtheorem{defn}[thm]{Definition}
        \newtheorem{remark}[thm]{Remark}
\theoremstyle{remark}
\title{}
\author{}
\date{}
\begin{document}

\title[Rota-Baxter family algebras]{Deformations and homotopy theory for Rota-Baxter family algebras}

\author{Apurba Das}
\address{Department of Mathematics,
Indian Institute of Technology, Kharagpur 721302, West Bengal, India.}
\email{apurbadas348@gmail.com, apurbadas348@maths.iitkgp.ac.in}

\begin{abstract}
The concept of Rota-Baxter family algebra is a generalization of Rota-Baxter algebra. It appears naturally in the algebraic aspects of renormalizations in quantum field theory. Rota-Baxter family algebras are closely related to dendriform family algebras. In this paper, we first construct an $L_\infty$-algebra whose Maurer-Cartan elements correspond to Rota-Baxter family algebra structures. Using this characterization, we define the cohomology of a given Rota-Baxter family algebra. As an application of our cohomology, we study formal and infinitesimal deformations of a given Rota-Baxter family algebra. Next, we define the notion of a homotopy Rota-Baxter family algebra structure on a given $A_\infty$-algebra. We end this paper by considering the homotopy version of dendriform family algebras and their relations with homotopy Rota-Baxter family algebras.
\end{abstract}

\maketitle



\medskip

\begin{center}

\noindent {2020 MSC classifications:} 17B38, 16D20, 16E40, 16S80.

\noindent {Keywords:} Rota-Baxter family algebras, $L_\infty$-algebras, Cohomology, Deformations, Dendriform family algebras.

\end{center}



\thispagestyle{empty}

\tableofcontents


\medskip


\section{Introduction}
\subsection{Rota-Baxter algebras and dendriform algebras} Rota-Baxter operators are an algebraic abstraction of the classical integral operator. They first appeared in the work of G. Baxter in his study of the fluctuation theory in probability \cite{baxter}. Subsequently, such operators were studied by G.-C. Rota \cite{rota}, P. Cartier \cite{cartier} and F. V. Atkinson \cite{atkinson}, among others. Given an associative algebra $A$, a linear map $R:A \rightarrow A$ is said to be a {\bf Rota-Baxter operator} on $A$ if
\begin{align}\label{rb-identity}
    R(a) \cdot R(b) = R \big( R(a) \cdot b  \! ~+~ \! a \cdot R(b)   \big), \text{ for } a, b \in A.
\end{align}
Here dot $\cdot$ denotes the associative multiplication on the algebra $A$. A pair $(A,R)$ consisting of an associative algebra $A$ and a Rota-Baxter operator $R:A \rightarrow A$ is called a {\bf Rota-Baxter algebra}. In the last twenty years, Rota-Baxter algebras got very much attention due to their intimate relations with pre-Lie algebras, dendriform algebras, shuffle algebras, splitting of operads, associative Yang-Baxter equations, infinitesimal bialgebras and renormalizations in quantum field theory \cite{aguiar,aguiar4,aguiar3,guo-keigher,bai-olivia,connes,fard-guo2}. In \cite{uchino}, K. Uchino introduced a notion of relative Rota-Baxter operator in his study of Poisson structures in the noncommutative set-up. Let $A$ be an associative algebra and $M$ be an $A$-bimodule. A {\bf relative Rota-Baxter operator} on $M$ over the algebra $A$ is a linear map $R:M\rightarrow A$ satisfying
\begin{align*}
    R(u) \cdot R(v) = R \big( R(u) \cdot_M v  \! ~+~ \! u \cdot_M R(v)   \big), \text{ for } u,v \in M,
\end{align*}
where $~\cdot_M ~$ denotes both the left and right $A$-actions on $M.$ It follows that any Rota-Baxter operator on $A$ can be regarded as a relative Rota-Baxter operator on the adjoint $A$-bimodule $A$. A triple $(A,M, R)$ consisting of an associative algebra $A$, an $A$-bimodule $M$ and a relative Rota-Baxter operator $R:M \xrightarrow{} A$ is called a {\bf relative Rota-Baxter algebra}. It has been shown in \cite{uchino} that relative Rota-Baxter algebras are closely related to dendriform algebras as introduced by J.-L. Loday \cite{loday}. Recall that a {\bf dendriform algebra} is a triple $(D, \prec, \succ)$ consisting of a vector space $D$ equipped with two linear maps $\prec, \succ : D \otimes D \xrightarrow{} D$ satisfying
\begin{align*}
    (x \prec y ) \prec z =~& x \prec (y \prec z + y \succ z),\\
      (x \succ y ) \prec z =~&  x \succ (y  \prec z),\\
      (x \prec y + x \succ y) \succ z =~& x \succ (y \succ z), \text{ for } x, y, z \in D.
\end{align*}
If $(A, M, R)$ is a relative Rota-Baxter algebra then $(M, \prec, \succ)$ is a dendriform algebra, where 
\begin{align}\label{induced-dend-family}
u \prec v := u \cdot_M R(v) \quad \text{  and } \quad u \succ v := R(u) \cdot_M v, \text{ for } u, v \in M.
\end{align}
See \cite{fard-guo,ebrahimi,gubarev} for more details about (relative) Rota-Baxter algebras and their relations with dendriform algebras.

\medskip

\subsection{Cohomology and deformation theory for algebraic structures} Algebraic structures are better understood by knowing some invariants (e.g. homology, cohomology, $K$-groups) associated with them. In \cite{hoch} G. Hochschild first introduced the cohomology theory of associative algebras in his study of extensions of algebras. This cohomology is now known as Hochschild cohomology. In 1964, M. Gerstenhaber \cite{gers} first considered algebraic deformation theory for associative algebras and showed that such deformations are governed by the Hochschild cohomology theory. Subsequently, cohomology and deformation theories have been generalized for various other algebraic structures, including Lie algebras, Leibniz algebras, Poisson algebras and many others \cite{nij-ric,nij-ric2,bala-leib,penkava}. These theories are not only defined for various types of algebras, rather such theories are also defined for various operators on algebras. In \cite{gers-sch} M. Gerstenhaber and S. Schack considered cohomology and deformation theories for algebra morphisms. Later, they were extensively studied by Y. Fr\'{e}gier, M. Markl, D. Yau and M. Zambon \cite{diagram,fre-zam}. Recently, the authors in \cite{tang,laza,das-rota,das-mishra} defined cohomology and deformation theories for Rota-Baxter operators on Lie algebras and associative algebras. The cohomological relations between Rota-Baxter algebras and dendriform algebras are described in \cite{das-mishra2}. 

\medskip  

\subsection{Rota-Baxter family algebras (cohomology and deformations)} Family algebraic structures indexed by a semigroup first appeared in the work of K. Ebrahimi-Fard, J. Gracia-Bondia and F. Patron in their study of renormalizations in quantum field theory \cite{ebrahimi-gracia} (see also \cite{kreimer-panzer}). In these algebras, the usual operations are replaced by a family of operations labelled by the elements of a semigroup $\Omega$.  The notion of Rota-Baxter family algebra was first introduced in \cite{ebrahimi-gracia} as the `family version' of a Rota-Baxter algebra. More precisely, a Rota-Baxter family algebra is a pair $(A, \{ R_\alpha \}_{\alpha \in \Omega})$ consisting of an associative algebra $A$ and a collection $\{ R_\alpha : A \rightarrow A \}_{\alpha \in \Omega}$ of linear maps labelled by the elements of a semigroup $\Omega$, satisfying (\ref{rb-family-identity}) (see Definition \ref{rb-family-defn}) that generalize the usual Rota-Baxter identity (\ref{rb-identity}). Recently, Rota-Baxter family algebras and dendriform family algebras (family version of dendriform algebras) are extensively studied by L. Foissy, X. Gao, L. Guo, D. Manchon and Y. Zhang among others \cite{foissy,guo-guo,zgao,zgaom}. In \cite{das-ns} the author introduced the notion of a relative Rota-Baxter family algebra and find various relations with dendriform family algebras. He also introduced a cohomology associated with the collection of operators $\{ R_\alpha \}_{\alpha \in \Omega}$. However, this cohomology has not taken the underlying algebra and the bimodule into the account. Therefore, this cohomology controls the deformation theory of the collection of operators  $\{ R_\alpha \}_{\alpha \in \Omega}$ but not the whole data of a (relative) Rota-Baxter family algebra.

\medskip

In this paper, our first aim is to define the cohomology of relative Rota-Baxter family algebra which takes the underlying algebra and the bimodule into the consideration. To define this cohomology, we first construct a suitable $L_\infty$-algebra that characterizes relative Rota-Baxter family algebras as its Maurer-Cartan elements. The cohomology of a given relative Rota-Baxter family algebra is defined to be the cohomology induced by the corresponding Maurer-Cartan element. As a particular case, we obtain the cohomology of a Rota-Baxter family algebra. Next, we study formal one-parameter deformations and infinitesimal deformations of a relative Rota-Baxter family algebra as applications of our cohomology. Among all results, we show that the set of all equivalence classes of infinitesimal deformations has a bijection with the second cohomology group of the relative Rota-Baxter family algebra.

\medskip

\subsection{Homotopy Rota-Baxter family algebras and Dend$_\infty$-family algebras} The notion of $A_\infty$-algebras first appeared in the work of J. Stasheff in his study of topological loop spaces \cite{stasheff}. The concept of $A_\infty$-algebras and $L_\infty$-algebras are respectively homotopy versions of associative and Lie algebras \cite{lada-markl,lada-stasheff,keller,getzler}. In \cite{laza} the authors first considered homotopy Rota-Baxter operators on Lie algebras and find their relations with homotopy pre-Lie algebras. Subsequently, this study was generalized in the context of associative algebras in \cite{das-mishra,das-mishra2}. More precisely, the authors have introduced and studied homotopy Rota-Baxter algebras and found various connections with Dend$_\infty$-algebras (homotopy dendriform algebras).

\medskip

Another aim of this paper is to define the notion of a homotopy (relative) Rota-Baxter family algebra. We first construct a suitable $L_\infty$-algebra that motivates us to define homotopy Rota-Baxter family algebras. We also introduce Dend$_\infty$-family algebras as the homotopy version of dendriform family algebras. We observe that a Dend$_\infty$-family algebra gives rise to an $\Omega$-$A_\infty$-algebra (also called $A_\infty$-algebra relative to the semigroup $\Omega$). This generalizes the fact that a dendriform family algebra induces an associative algebra relative to $\Omega$ (see Aguiar \cite{aguiar-dend}). Finally, we show that a particular class of homotopy relative Rota-Baxter family algebras (called `strict' homotopy relative Rota-Baxter family algebras) induce Dend$_\infty$-family algebras. This is a generalization of the construction (\ref{induced-dend-family}) in the homotopy case. On the other hand, we show that any Dend$_\infty$-family algebra gives rise to a strict homotopy relative Rota-Baxter family algebra so that the induced Dend$_\infty$-family algebra coincides with the given one.

\medskip

\subsection{Organization of the paper} We organize the paper as follows. In section \ref{sec2}, we recall (relative) Rota-Baxter family algebras and some necessary background on $L_\infty$-algebras. Our aim in section \ref{sec3} is to construct the $L_\infty$-algebra that characterizes relative Rota-Baxter family algebras as its Maurer-Cartan elements. This characterization allows us to define the cohomology of a (relative) Rota-Baxter family algebra in section \ref{sec4}. As an application of cohomology, in section \ref{sec5}, we study formal one-parameter deformations and infinitesimal deformations of a relative Rota-Baxter family algebra. Finally, in section \ref{sec6}, we introduce and study homotopy relative Rota-Baxter family algebras and Dend$_\infty$-family algebras.

\medskip

Throughout the paper, all vector spaces, linear maps and unadorned tensor products are over a field ${\bf k}$ of characteristic $0$.

\section{Background on Rota-Baxter family algebras and $L_\infty$-algebras}\label{sec2}
In this section, we first recall Rota-Baxter family algebras, relative Rota-Baxter family algebras and their relations with dendriform family algebras. Next, we also recall some necessary background on $L_\infty$-algebras. Our main references are \cite{das-mishra,lada-markl,das-ns,zgao,zgaom}.

\subsection{Rota-Baxter family algebras and dendriform family algebras}
Here we collect some definitions and basic results about (relative) Rota-Baxter family algebras and dendriform family algebras.
\begin{defn}\label{rb-family-defn}
A {\bf Rota-Baxter family algebra} is a pair $(A, \{ R_\alpha \}_{\alpha \in \Omega})$ consisting of an associative algebra $A$ and a collection $\{ R_\alpha : A \rightarrow A \}_{\alpha \in \Omega}$ of linear maps labelled by the elements of $\Omega$ that satisfy
\begin{align}\label{rb-family-identity}
R_\alpha (a) \cdot R_\beta (b) = R_{\alpha \beta} \big(  R_\alpha (a) \cdot b ~+~ a \cdot R_\beta (b)  \big), \text{ for } a, b \in A, \alpha , \beta \in \Omega.
\end{align}
\end{defn}

\medskip

Let $A$ be an associative algebra. An {\bf $A$-bimodule} is a vector space $M$ equipped with two linear maps (called the left and right $A$-actions, respectively) $l : A \otimes M \rightarrow M, ~(a \otimes u) \mapsto a \cdot_M u$ and $r : M \otimes A \rightarrow M, ~ (u \otimes a) \mapsto u \cdot_M a$ satisfying
\begin{align*}
(a \cdot b) \cdot_M u = a \cdot_M (b \cdot_M u), \quad (a \cdot_M u) \cdot_M b = a \cdot_M (u \cdot_M b) ~~~ ~~ \text{ and } ~~ ~~~ (u \cdot_M a) \cdot_M b = u \cdot_M (a \cdot b),
\end{align*}
for $a, b \in A$, $u \in M$. We often denote an $A$-bimodule as above by $(M, l, r)$ or simply by $M$. It is easy to see that any associative algebra $A$ can be regarded as an $A$-bimodule, where both the left and right $A$-actions are given by the algebra multiplication map. This is called the adjoint $A$-bimodule.

\begin{defn}
A {\bf relative Rota-Baxter family algebra} is a triple $(A, M, \{ R_\alpha \}_{\alpha \in \Omega})$ consisting of an associative algebra $A$, an $A$-bimodule $M$ and a collection $\{ R_\alpha : M \rightarrow A \}_{\alpha \in \Omega}$ of linear maps satisfying
\begin{align}\label{rel-rb-fam-iden}
R_\alpha (u ) \cdot R_\beta (v) = R_{\alpha \beta} \big(  R_\alpha (u) \cdot_M v ~+~ u \cdot_M R_\beta (v)   \big), \text{ for } u, v \in M, \alpha , \beta \in \Omega.
\end{align}
\end{defn}

It is easy to see that any Rota-Baxter family algebra $(A, \{ R_\alpha \}_{\alpha \in \Omega})$ can be regarded as a relative Rota-Baxter family algebra $(A, A, \{ R_\alpha \}_{\alpha \in \Omega})$, where $A$ is equipped with the adjoint $A$-bimodule structure.

In the next, we recall dendriform family algebras which are the underlying structure of (relative) Rota-Baxter family algebras.

\begin{defn}
A {\bf dendriform family algebra} is a pair $(D, \{ \prec_\alpha, \succ_\alpha \}_{\alpha \in \Omega})$ consisting of a vector space $D$ equipped with a collection of linear maps $\{ \prec_\alpha , \succ_\alpha : D \otimes D \rightarrow D \}_{\alpha \in \Omega}$ labelled by the elements of $\Omega$ satisfying
\begin{align*}
( x \prec_\alpha y ) \prec_\beta z =~& x \prec_{\alpha \beta} (y \prec_\beta z + y \succ_\alpha z), \\
( x \succ_\alpha y ) \prec_\beta z =~&  x \succ_\alpha (y  \prec_\beta z), \\
(x \prec_\beta y + x \succ_\alpha y) \succ_{\alpha \beta} z =~& x \succ_\alpha (y \succ_\beta z), \text{ for } x, y, z \in D \text{ and } \alpha, \beta \in \Omega.
\end{align*}
\end{defn}

It follows from the above definition that any dendriform algebra $(D, \prec, \succ)$ can be considered as a dendriform family algebra $(D, \{ \prec_\alpha, \succ_\alpha \}_{\alpha \in \Omega})$, where $\prec_\alpha ~=~ \prec$ and $\succ_\alpha ~=~ \succ$, for all $\alpha \in \Omega$.

\begin{prop}
Let $(A, M, \{ R_\alpha \}_{\alpha \in \Omega})$ be a relative Rota-Baxter family algebra. Then the vector space $M$ inherits a dendriform family algebra structure with the operations
\begin{align*}
u \prec_\alpha v := u \cdot_M R_\alpha (v) ~~~~ \text{ and } ~~~~ u \succ_\alpha v := R_\alpha (u) \cdot_M v, ~\text{ for } u, v \in M, ~\alpha \in \Omega.
\end{align*}
In other words, $(M,  \{ \prec_\alpha, \succ_\alpha \}_{\alpha \in \Omega})$ is a dendriform family algebra, called the induced dendriform family algebra.
\end{prop}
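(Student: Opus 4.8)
The plan is to verify the three defining identities of a dendriform family algebra directly for the operations $u \prec_\alpha v = u \cdot_M R_\alpha(v)$ and $u \succ_\alpha v = R_\alpha(u) \cdot_M v$, tracking carefully how each reduces to a combination of the bimodule axioms for $M$ and the relative Rota-Baxter family identity (\ref{rel-rb-fam-iden}). The observation that organizes the whole computation is that the three dendriform axioms are governed respectively by the three bimodule compatibilities, with (\ref{rel-rb-fam-iden}) supplying the bridge between a genuine product $R_\alpha(u) \cdot R_\beta(v)$ in $A$ and the element $R_{\alpha\beta}(R_\alpha(u) \cdot_M v + u \cdot_M R_\beta(v))$ obtained from a mixed $\cdot_M$-expression.

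First I would dispose of the middle axiom, since it needs no Rota-Baxter input at all. Expanding both sides gives $(x \succ_\alpha y) \prec_\beta z = (R_\alpha(x) \cdot_M y) \cdot_M R_\beta(z)$ and $x \succ_\alpha (y \prec_\beta z) = R_\alpha(x) \cdot_M (y \cdot_M R_\beta(z))$, and these agree by the middle bimodule axiom $(a \cdot_M u) \cdot_M b = a \cdot_M (u \cdot_M b)$ with $a = R_\alpha(x)$ and $b = R_\beta(z)$. The labels $\alpha, \beta$ are unchanged across this identity, so the index bookkeeping is trivial here.

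Next I would treat the first axiom. Starting from $(x \prec_\alpha y) \prec_\beta z = (x \cdot_M R_\alpha(y)) \cdot_M R_\beta(z)$, the right-action axiom $(u \cdot_M a) \cdot_M b = u \cdot_M (a \cdot b)$ rewrites this as $x \cdot_M (R_\alpha(y) \cdot R_\beta(z))$. Applying (\ref{rel-rb-fam-iden}) to the inner product yields $x \cdot_M R_{\alpha\beta}(R_\alpha(y) \cdot_M z + y \cdot_M R_\beta(z))$, which is precisely $x \prec_{\alpha\beta}(y \prec_\beta z + y \succ_\alpha z)$. The third axiom is dual: here the left-action axiom $(a \cdot b) \cdot_M u = a \cdot_M (b \cdot_M u)$ does the work, but only after recognizing, again via (\ref{rel-rb-fam-iden}), that $R_{\alpha\beta}(x \cdot_M R_\beta(y) + R_\alpha(x) \cdot_M y)$ is the honest product $R_\alpha(x) \cdot R_\beta(y)$ in $A$; one then rewrites $(R_\alpha(x) \cdot R_\beta(y)) \cdot_M z$ as $R_\alpha(x) \cdot_M (R_\beta(y) \cdot_M z) = x \succ_\alpha (y \succ_\beta z)$.

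The computation carries no serious obstacle; the only point demanding genuine care is the placement of the semigroup labels. The identity (\ref{rel-rb-fam-iden}) forces the operator $R_{\alpha\beta}$ on its right-hand side, and one must confirm that this is exactly the index appearing on the outer operation ($\prec_{\alpha\beta}$ in the first axiom, $\succ_{\alpha\beta}$ in the third), while the two inner terms $R_\alpha(\cdot)\cdot_M(\cdot)$ and $(\cdot)\cdot_M R_\beta(\cdot)$ must be matched correctly with $\succ_\alpha$ and $\prec_\beta$. Once these index matchings are verified, all three identities close and $(M, \{\prec_\alpha, \succ_\alpha\}_{\alpha \in \Omega})$ is a dendriform family algebra.
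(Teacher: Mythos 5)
Your proof is correct: all three dendriform family axioms, including the semigroup-index placements, check out exactly as you describe (the middle axiom from the mixed bimodule axiom alone, the first from the right-action axiom plus the identity (\ref{rel-rb-fam-iden}), and the third from (\ref{rel-rb-fam-iden}) read in reverse plus the left-action axiom). The paper states this proposition without proof, as recalled background from the literature, and your direct verification is precisely the standard argument one would supply.
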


Let $(D,  \{ \prec_\alpha, \succ_\alpha \}_{\alpha \in \Omega} )$ be any dendriform family algebra. Then the space $D \otimes {\bf k} \Omega$ carries a dendriform algebra structure with the operations
\begin{align*}
(x \otimes \alpha) \prec (y \otimes \beta) = (x \prec_\beta y) \otimes \alpha \beta ~~~~ \text{ and } ~~~~ (x \otimes \alpha) \succ (y \otimes \beta)  = (x \succ_\alpha y) \otimes \alpha \beta,
\end{align*}
for $x \otimes \alpha,~ y \otimes \beta \in D \otimes {\bf k} \Omega$ (see \cite{zgaom}). As a consequence, $D \otimes {\bf k} \Omega$ inherits an associative algebra structure with the multiplication (denoted by $~\cdot_\mathrm{Tot}~$) given by
\begin{align*}
(x \otimes \alpha) \cdot_\mathrm{Tot} (y \otimes \beta) := (x \otimes \alpha) \prec (y \otimes \beta) ~+~ (x \otimes \alpha) \succ (y \otimes \beta).
\end{align*}
We denote this associative algebra simply by $(D \otimes {\bf k} \Omega)_\mathrm{Tot}$. Further, the vector space $D$ carries a  $(D \otimes {\bf k} \Omega)_\mathrm{Tot}$-bimodule structure with the left and right  $(D \otimes {\bf k} \Omega)_\mathrm{Tot}$-actions
\begin{align*}
(x \otimes \alpha) \cdot_D y := x \succ_\alpha y \quad \text{ and } \quad y \cdot_D (x \otimes \alpha) := y \prec_\alpha x, \text{ for } x \otimes \alpha \in  (D \otimes {\bf k} \Omega)_\mathrm{Tot},~ y \in D.
\end{align*}
With all these notations, the tuple  $ \big( (D \otimes {\bf k} \Omega)_\mathrm{Tot}, D, \{ R_\alpha \}_{\alpha \in \Omega} \big)$ is a relative Rota-Baxter family algebra, where
\begin{align*}
R_\alpha : D \rightarrow  (D \otimes {\bf k} \Omega)_\mathrm{Tot}, ~ R_\alpha (x) =  x \otimes \alpha, \text{ for }x \in D.
\end{align*}
Moreover, the induced dendriform family algebra structure on $D$ coincides with the given one.

\medskip

\subsection{$L_\infty$-algebras} In this subsection, we recall some basic definitions and results regarding $L_\infty$-algebras. Here we follow the sign conventions of Voronov \cite{voro} which coincides with that of Lada and Markl \cite{lada-markl} by a degree shift.

\begin{defn}
An {\bf $L_\infty$-algebra} is a pair $(\mathfrak{L}, \{ l_k \}_{k \geq 1})$ consisting of a graded vector space $\mathfrak{L} = \oplus_{i \in \mathbb{Z}} \mathfrak{L}_i$ equipped with a collection of degree $1$ linear maps $\{ l_k : \mathfrak{L}^{\otimes k} \rightarrow \mathfrak{L} \}_{k \geq 1}$ satisfying

- (graded symmetry) for $k \geq 1$, $\sigma \in \mathbb{S}_k$ and homogeneous elements $x_1, \ldots, x_k \in \mathfrak{L}$,
\begin{align*}
l_k \big(  x_{\sigma (1)}, \ldots, x_{\sigma (k)}  \big) = \epsilon(\sigma)~ l_k (x_1, \ldots, x_k),
\end{align*}

- (shifted higher Jacobi identities) for $n \geq 1$ and homogeneous elements $x_1, \ldots, x_n \in \mathfrak{L}$,
\begin{align*}
\sum_{i+j=n+1} \sum_{\sigma \in \mathbb{S}_{(i, n-i)}} (-1)^\sigma ~\epsilon (\sigma) ~ l_j \big(  l_i ( x_{\sigma (1)}, \ldots, x_{\sigma (i)}  ), x_{\sigma (i+1)}, \ldots, x_{\sigma (n)}  \big) = 0.
\end{align*}
Here $\mathbb{S}_k$ denotes the set of all permutations on the set $\{ 1, \ldots, k \}$, and $\mathbb{S}_{(i, n-i)}$ is the set of all $(i, n-i)$-shuffles. The notation $\epsilon (\sigma) = \epsilon (\sigma; x_1, \ldots, x_n)$ represents the {\em Koszul sign} that appears in the graded context.
\end{defn}

Any differential graded Lie algebra (in particular, any graded Lie algebra) can be regarded as an $L_\infty$-algebra. More precisely, if $(\mathfrak{g}, [~,~], d)$ is a differential graded Lie algebra, then $({s^{-1}} \mathfrak{g}, \{ l_k \}_{k \geq 1})$ is an $L_\infty$-algebra, where 
\begin{align*}
l_1 (s^{-1} x) = s^{-1} (dx), \quad l_2 (s^{-1}x, s^{-1} y) = (-1)^{|x|} s^{-1} [x, y] \quad \text{ and } \quad l_k = 0 \text{ for } k \geq 3.
\end{align*}

Throughout this paper, we assume that all $L_\infty$-algebras are weakly filtered \cite{getzler,laza}. In other words, we assume that certain formal sums of elements of $\mathfrak{L}$ always converge to some elements in $\mathfrak{L}$. This assumption is essential while considering Maurer-Cartan elements in an $L_\infty$-algebra. 

\begin{defn}
Let $( \mathfrak{L}, \{ l_k \}_{k \geq 1}  )$ be an $L_\infty$-algebra. An element $\alpha \in \mathfrak{L}_0$ is said to be a {\bf Maurer-Cartan element} of the $L_\infty$-algebra if the element $\alpha$ satisfies
\begin{align*}
\sum_{k=1}^\infty \frac{1}{k!} ~l_k (\underbrace{\alpha, \ldots, \alpha}_{k \text{ times}}) = 0.
\end{align*}
\end{defn}

Let $( \mathfrak{L}, \{ l_k \}_{k \geq 1}  )$ be an $L_\infty$-algebra and $\alpha$ be a Maurer-Cartan element of it. Then using $\alpha$, one can construct a new $L_\infty$-algebra on the same graded vector space $\mathfrak{L}$ with the structure maps $\{ l_k^\alpha \}_{k \geq 1}$ given by
\begin{align*}
l_k^\alpha (x_1, \ldots, x_k) := \sum_{i=0}^\infty \frac{1}{i!} ~l_{k+i} (\underbrace{\alpha, \ldots, \alpha}_{i \text{ times}}, x_1, \ldots, x_k), \text{ for } x_1, \ldots, x_k \in \mathfrak{L}.
\end{align*}
The $L_\infty$-algebra $(\mathfrak{L}, \{ l_k^\alpha \}_{k \geq 1})$ is said to be obtained from $(\mathfrak{L}, \{ l_k \}_{k \geq 1})$ twisting by the Maurer-Cartan element $\alpha$.

$L_\infty$-algebras often arise from graded Lie algebras endowed with certain additional data. The structure maps for such $L_\infty$-algebras are obtained via derived brackets.

\begin{defn}
A {\bf $V$-data} is a quadruple $(\mathfrak{g}, \mathfrak{a}, P, \triangle)$ consisting of a graded Lie algebra $\mathfrak{g}$ (with the graded Lie bracket $[~,~]$), an abelian Lie subalgebra $\mathfrak{a} \subset \mathfrak{g}$, a projection map $P: \mathfrak{g} \rightarrow \mathfrak{g}$ with $\mathrm{im}(P) = \mathfrak{a}$ and $\mathrm{ker}(P) \subset \mathfrak{g}$ a graded Lie subalgebra, and $\triangle \in \mathrm{ker}(P)_1$ that satisfies $[\triangle, \triangle] = 0$.
\end{defn}

\begin{thm}\label{voro-thm}
Let $(\mathfrak{g}, \mathfrak{a}, P, \triangle)$ be a $V$-data. Then the graded vector space $s^{-1} \mathfrak{g} \oplus \mathfrak{a}$ inherits an $L_\infty$-algebra with the structure maps $\{ l_k \}_{k \geq 1}$ are given by
\begin{align}
l_1 \big( (s^{-1}x,a)  \big)  =~& \big( - s^{-1} [\triangle, x], ~ P (x + [\triangle, a])  \big), \\
l_2 \big( (s^{-1}x, 0), (s^{-1}y, 0)  \big) =~&  \big( (-1)^{|x|} s^{-1} [x,y],~ 0  \big),  \\
l_k \big( (s^{-1}x, 0), (0, a_1), \ldots, (0, a_{k-1})  \big) =~&  \big( 0, ~ P[ \cdots [ ~[ x, a_1], a_2], \ldots, a_{k-1}]  \big), \text{ for } k \geq 2,  \\
l_k \big( (0, a_1), \ldots, (0, a_{k})  \big) =~&   \big( 0, ~  ~ P[ \cdots [ ~[ \triangle , a_1], a_2], \ldots, a_{k}] \big), \text{ for } k \geq 1, \label{a-bracket} \\
\text{ and up to permutations of the} & \text{ above inputs, all other maps vanish}. \nonumber
\end{align}
Here $x, y$ are homogeneous elements of $\mathfrak{g}$ and $a, a_1, \ldots, a_k$ are homogeneous elements of $\mathfrak{a}$.
\end{thm}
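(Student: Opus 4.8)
The plan is to recognize the asserted structure as an instance of Voronov's higher derived bracket construction \cite{voro} and to verify the two defining axioms of an $L_\infty$-algebra---graded symmetry and the shifted higher Jacobi identities---directly from the data of the $V$-data. Since the only nonvanishing brackets involve at most two inputs from the $s^{-1}\mathfrak{g}$-summand, I would organize the entire verification according to the number of $s^{-1}\mathfrak{g}$-inputs appearing among $x_1, \dots, x_n$, namely zero, one, or two.

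The graded symmetry is the routine part: it follows from the graded antisymmetry of the Lie bracket $[-,-]$ on $\mathfrak{g}$ together with the degree shift $s^{-1}$, which converts graded-antisymmetric operations into graded-symmetric ones, exactly as in the passage from a (differential) graded Lie algebra to its associated $L_\infty$-algebra recalled above. The Koszul signs $\epsilon(\sigma)$ are then absorbed automatically, so no genuine obstruction lies here.

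For the higher Jacobi identities I would treat the three cases in turn. When two inputs lie in $s^{-1}\mathfrak{g}$, the identity collapses to the statement that $(s^{-1}\mathfrak{g}, l_1, l_2)$ is the $L_\infty$-algebra underlying a differential graded Lie algebra: that $-[\triangle, -]$ is a differential uses $[\triangle, \triangle] = 0$ together with the graded Jacobi identity, and that it is a derivation of $[-,-]$ is again the graded Jacobi identity. When exactly one input lies in $s^{-1}\mathfrak{g}$ and the remaining inputs lie in $\mathfrak{a}$, the mixed brackets $l_k\big((s^{-1}x,0),(0,a_1),\dots\big)=\big(0, P[\cdots[[x,a_1],a_2],\dots,a_{k-1}]\big)$ enter, and the required identity follows from the graded Jacobi identity in $\mathfrak{g}$ combined with the fact that $\ker(P)$ is a graded Lie subalgebra (so that the ``unprojected'' inner brackets behave predictably under $P$) and that $\mathfrak{a}$ is abelian (killing the terms in which two $\mathfrak{a}$-inputs are bracketed directly).

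The genuinely delicate case---and the main obstacle---is when all inputs lie in $\mathfrak{a}$, where the higher Jacobi identity becomes precisely Voronov's assertion that the derived brackets $l_n(a_1,\dots,a_n)=P[\cdots[[\triangle,a_1],a_2],\dots,a_n]$ form an $L_\infty$-structure on $\mathfrak{a}$. Here the sign bookkeeping is heaviest, and one must carefully control how the projection $P$ interacts with the iterated ambient brackets. The strategy I would follow is Voronov's: decompose each intermediate bracket into its $\mathfrak{a}$-component and its $\ker(P)$-component, use that $\ker(P)$ is closed under $[-,-]$ to discard the $\ker(P)$-parts after applying $P$, use $[\mathfrak{a},\mathfrak{a}]=0$ to eliminate the remaining unwanted terms, and finally invoke $[\triangle,\triangle]=0$ (via the graded Jacobi identity) to cancel the terms in which $\triangle$ is bracketed with itself. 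Assembling these cancellations with the correct Koszul signs is the crux of the argument; once it is in place, the three cases together establish that $\big(s^{-1}\mathfrak{g}\oplus\mathfrak{a}, \{l_k\}_{k\geq1}\big)$ is an $L_\infty$-algebra.
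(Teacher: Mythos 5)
The first thing to say is that the paper offers no proof of Theorem \ref{voro-thm} to compare against: it is recalled as background, in Voronov's sign conventions, with the proof delegated entirely to \cite{voro} (the same is true of Theorem \ref{cons-thm}). So your attempt has to be judged against what a verification actually requires, and there it falls short in one structural way and one substantive way, even though it correctly names the ingredients that must be used ($[\triangle,\triangle]=0$, $\ker(P)$ a graded Lie subalgebra containing $\triangle$, and $\mathfrak{a}$ abelian).

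The structural problem is your organizing trichotomy ``zero, one, or two inputs from $s^{-1}\mathfrak{g}$''. The shifted Jacobi identities have to be checked for every distribution of inputs, and they do not decouple the way you claim. With three inputs from $s^{-1}\mathfrak{g}$ the identity is not vacuous: the $(i,j)=(2,2)$ terms $l_2(l_2(\cdot,\cdot),\cdot)$ survive and cancel only by the graded Jacobi identity of $\mathfrak{g}$ --- a case your list omits. More seriously, the two-$\mathfrak{g}$-input identities do not ``collapse to the statement that $(s^{-1}\mathfrak{g},l_1,l_2)$ is the $L_\infty$-algebra underlying a differential graded Lie algebra'', because $l_1$ does not preserve the summand $s^{-1}\mathfrak{g}$: by the stated formula, $l_1\big((s^{-1}x,0)\big)=\big(-s^{-1}[\triangle,x],\,P(x)\big)$ has a nonzero $\mathfrak{a}$-component. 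Already for $n=2$ with inputs $(s^{-1}x,0)$ and $(s^{-1}y,0)$, the $\mathfrak{a}$-component of the Jacobi identity is (up to Koszul signs) the relation $P[x,y]=P[x,Py]+P[Px,y]$, which holds only because $\ker(P)$ is a subalgebra and $\mathfrak{a}$ is abelian, and which involves the mixed bracket $l_2\big((s^{-1}y,0),(0,Px)\big)$ --- exactly the cross-terms your decomposition assumes away. These interactions between the two summands are the entire content of the theorem; a case split that hides them cannot organize the proof. Finally, for the cases you rightly call the crux (all inputs in $\mathfrak{a}$, which is Voronov's derived-bracket theorem, and the mixed one-$\mathfrak{g}$-input identities), you describe the intended cancellation mechanisms but explicitly defer carrying them out, so what you have is a plan rather than a proof. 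To close it you must either do those verifications, signs and all, or cite them from \cite{voro} --- which is precisely what the paper itself does.
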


The above result has the following consequences.
\begin{thm}\label{cons-thm}
(a) Let $(\mathfrak{g}, \mathfrak{a}, P, \triangle)$ be a $V$-data. Then $(\mathfrak{a}, \{ l_k \}_{k \geq 1})$ is an $L_\infty$-algebra, where the structure maps $\{ l_k \}_{k \geq 1}$ are given by (\ref{a-bracket}).

(b) Let $(\mathfrak{g}, \mathfrak{a}, P, \triangle)$ be a $V$-data and $\mathfrak{h} \subset \mathfrak{g}$ be a graded Lie subalgebra that satisfies $[\triangle, \mathfrak{h}] \subset \mathfrak{h}$. Then $(s^{-1} \mathfrak{h} \oplus \mathfrak{a}, \{ l_k \}_{k \geq 1})$ is an $L_\infty$-algebra, where the structure maps are given by the formulas given in Theorem \ref{voro-thm}.
\end{thm}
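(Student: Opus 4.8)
The plan is to deduce both statements directly from Theorem \ref{voro-thm} by exhibiting the relevant graded vector spaces as \emph{sub-$L_\infty$-algebras} of the big $L_\infty$-algebra $(s^{-1}\mathfrak{g} \oplus \mathfrak{a}, \{ l_k \}_{k \geq 1})$ constructed there. The guiding principle I would first record is: if $(\mathfrak{L}, \{ l_k \}_{k \geq 1})$ is any $L_\infty$-algebra and $W \subset \mathfrak{L}$ is a graded subspace closed under every structure map, i.e. $l_k(W^{\otimes k}) \subset W$ for all $k \geq 1$, then $(W, \{ l_k|_W \}_{k \geq 1})$ is again an $L_\infty$-algebra. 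The graded symmetry of each $l_k|_W$ is immediate, and the shifted higher Jacobi identities hold on $W$ because, for inputs taken from $W$, every intermediate bracket $l_i(x_{\sigma(1)}, \ldots, x_{\sigma(i)})$ already lies in $W$ by closure; hence the identity for $W$ is literally the restriction to $W$-inputs of the corresponding identity in $\mathfrak{L}$. No sign or Koszul bookkeeping has to be redone, and the whole problem reduces to checking closure.

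For part (b), I would set $W := s^{-1}\mathfrak{h} \oplus \mathfrak{a}$ and verify that each family of maps in Theorem \ref{voro-thm} preserves $W$, one at a time. For $l_1$ applied to $(s^{-1}x, a)$ with $x \in \mathfrak{h}$, the first component $-s^{-1}[\triangle, x]$ lies in $s^{-1}\mathfrak{h}$ precisely because of the hypothesis $[\triangle, \mathfrak{h}] \subset \mathfrak{h}$, while the second component $P(x + [\triangle, a])$ lies in $\mathfrak{a} = \mathrm{im}(P)$ automatically. For $l_2$ applied to $(s^{-1}x, 0), (s^{-1}y, 0)$ with $x, y \in \mathfrak{h}$, the output $((-1)^{|x|} s^{-1}[x, y], 0)$ lies in $W$ exactly because $\mathfrak{h}$ is a graded Lie subalgebra, so $[x, y] \in \mathfrak{h}$. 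The remaining maps $l_k$ ($k \geq 2$) with one $s^{-1}\mathfrak{h}$-entry and the rest in $\mathfrak{a}$, and $l_k$ ($k \geq 1$) with all entries in $\mathfrak{a}$, output elements of $0 \oplus \mathfrak{a} \subset W$ since their values lie in $\mathrm{im}(P) = \mathfrak{a}$; all other maps vanish. Hence $W$ is closed, and by the principle above it is an $L_\infty$-algebra with the stated structure maps.

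For part (a), I would simply specialize part (b) to $\mathfrak{h} = \{ 0 \}$, which is trivially a graded Lie subalgebra satisfying $[\triangle, \{0\}] \subset \{0\}$; then $W = s^{-1}\{0\} \oplus \mathfrak{a} = \mathfrak{a}$, and the only surviving structure maps are those of the last family of Theorem \ref{voro-thm}, namely the maps (\ref{a-bracket}). Equivalently, one checks directly that $0 \oplus \mathfrak{a}$ is closed, since by (\ref{a-bracket}) every bracket of elements of $\mathfrak{a}$ already lands in $\mathfrak{a}$.

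There is no serious analytic or combinatorial obstacle here; the content is entirely the closure verification. The one point deserving care --- and the only place where the two hypotheses on $\mathfrak{h}$ get used --- is the pair of checks for $l_1$ and $l_2$: the condition $[\triangle, \mathfrak{h}] \subset \mathfrak{h}$ is exactly what makes $l_1$ preserve the $s^{-1}\mathfrak{h}$-summand, and the condition that $\mathfrak{h}$ be a Lie subalgebra is exactly what makes $l_2$ do so, while all higher maps land in $\mathfrak{a}$ regardless and impose no further constraint. I would also note in passing that, since the ambient $L_\infty$-algebra is weakly filtered, the restricted structure on $W$ inherits this property, so that Maurer-Cartan elements continue to make sense without any additional argument.
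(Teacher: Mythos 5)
Your proposal is correct and follows exactly the route the paper intends: the paper states Theorem \ref{cons-thm} without proof as a direct consequence of Theorem \ref{voro-thm} (citing Voronov \cite{voro}), and your argument---that a graded subspace closed under all structure maps is a sub-$L_\infty$-algebra, followed by the closure checks showing $[\triangle,\mathfrak{h}]\subset\mathfrak{h}$ handles $l_1$, the subalgebra condition handles $l_2$, and all higher maps land in $\mathrm{im}(P)=\mathfrak{a}$ automatically---is precisely the standard way to make that deduction rigorous. The specialization $\mathfrak{h}=\{0\}$ to obtain part (a) from part (b) is also valid.
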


\medskip

\section{Maurer-Cartan characterization of relative Rota-Baxter family algebras}\label{sec3} In this section, our main aim is to construct an $L_\infty$-algebra that characterizes relative Rota-Baxter family algebras as its Maurer-Cartan elements. Along the way, we will prove some other relevant results.

\subsection{A generalization of the Gerstenhaber's graded Lie algebra} Let $V$ be any vector space. For any natural number $n \in \mathbb{N}$, let $\mathrm{Hom}_\Omega (V^{\otimes n}, V)$ be the set whose elements are given by a collection $f = \{ f_{\alpha_1, \ldots, \alpha_n} : V^{\otimes n} \rightarrow V \}_{ \alpha_1, \ldots, \alpha_n \in \Omega }$ of linear maps labelled by the elements of $\Omega^{\times n}$. Note that the set $\mathrm{Hom}_\Omega (V^{\otimes n}, V)$ has an obvious vector space structure with the addition and scalar multiplication respectively given by
\begin{align*}
f + f' = \big\{ f_{\alpha_1, \ldots, \alpha_n} + f'_{\alpha_1, \ldots, \alpha_n} \big\}_{\alpha_1, \ldots, \alpha_n \in \Omega} ~~~~ \text{ and } ~~~~ \lambda f  = \big\{ \lambda f_{\alpha_1, \ldots, \alpha_n} \big\}_{\alpha_1, \ldots, \alpha_n \in \Omega},
\end{align*}
for $f = \{ f_{\alpha_1, \ldots, \alpha_n} \}_{ \alpha_1, \ldots, \alpha_n \in \Omega }$, $f' = \{ f'_{\alpha_1, \ldots, \alpha_n} \}_{ \alpha_1, \ldots, \alpha_n \in \Omega } \in \mathrm{Hom}_\Omega (V^{\otimes n}, V)$ and $\lambda \in {\bf k}$. For any $m,n \geq 1$ and $1 \leq i \leq m$, there is a linear map $\circ_i : \mathrm{Hom}_\Omega (V^{\otimes m}, V) \otimes \mathrm{Hom}_\Omega (V^{\otimes n}, V) \rightarrow \mathrm{Hom}_\Omega (V^{\otimes m+n-1}, V)$, $(f, g) \mapsto f \circ_i g~$ given by
\begin{align}\label{partial-maps}
(f ~ \circ_i &~g)_{\alpha_1, \ldots, \alpha_{m+n-1}} (v_1, \ldots, v_{m+n-1}) \\ &:= f_{\alpha_1, \alpha_{i-1}, \alpha_i \cdots \alpha_{i+n-1}, \ldots, \alpha_{m+n-1}} \big( v_1, \ldots, v_{i-1}, g_{\alpha_i, \ldots, \alpha_{i+n-1}} (v_i, \ldots, v_{i+n-1}), v_{i+n}, \ldots, v_{m+n-1}  \big), \nonumber
\end{align}
for $\alpha_1, \ldots, \alpha_{m+n-1} \in \Omega$ and $v_1, \ldots, v_{m+n-1} \in V$. Such maps (which are called {\em partial compositions}) satisfy the followings
\begin{align*}
(f \circ_i g) \circ_{i+j-1} h =~& f \circ_i (g \circ_j h), \text{ for } 1 \leq i \leq m,~ 1 \leq j \leq n, \\
(f \circ_i g) \circ_{j+n-1} h =~& (f \circ_j h) \circ_i g, \text{ for } 1 \leq i < j \leq m,
\end{align*}
for $f \in \mathrm{Hom}_\Omega (V^{\otimes m}, V)$, $g \in \mathrm{Hom}_\Omega (V^{\otimes n}, V)$ and $h \in \mathrm{Hom}_\Omega (V^{\otimes p}, V)$.
Moreover, there is an element $I = \{ I_\alpha : V \rightarrow V \}_{\alpha \in \Omega} \in \mathrm{Hom}_\Omega (V, V)$ given by $I_\alpha = \mathrm{id}_V$, for all $\alpha \in \Omega$. The element $I \in \mathrm{Hom}_\Omega (V, V)$ satisfies
\begin{align*}
f \circ_i I = f = I \circ_1 f,
\end{align*}
for any $f \in \mathrm{Hom}_\Omega (V^{\otimes m}, V)$ and $1 \leq i \leq m$. Thus, the collection of vector spaces $\{ \mathrm{Hom}_\Omega (V^{\otimes n}, V) \}_{n \geq 1}$ equipped with the partial compositions (\ref{partial-maps}) forms a nonsymmetric operad. As a consequence, the graded vector space $\oplus_{n \geq 1} \mathrm{Hom}_\Omega (V^{\otimes n}, V)$ carries a degree $-1$ graded Lie bracket (called the {\bf $\Omega$-Gerstenhaber bracket}) given by
\begin{align}
[f, g]_\Omega := \sum_{i=1}^m (-1)^{(i-1)(n-1)}~ f \circ_i g - (-1)^{(m-1)(n-1)} \sum_{i=1}^n (-1)^{(i-1) (m-1)}~ g \circ_i f,
\end{align}
for $f \in \mathrm{Hom}_\Omega (V^{\otimes m}, V)$ and $g \in \mathrm{Hom}_\Omega (V^{\otimes n}, V)$. In other words, $\big( \oplus_{n \geq 0} \mathrm{Hom}_\Omega (V^{\otimes n+1}, V), [~,~]_\Omega     \big)$ is a graded Lie algebra.

\medskip

\subsection{A new graded Lie algebra}\label{subsection-a graded lie} Given an associative algebra $A$ and an $A$-bimodule $M$, here we construct a new graded Lie algebra using the $\Omega$-Gerstenhaber bracket. A Maurer-Cartan element of this graded Lie algebra corresponds to a collection $\{ R_\alpha : M \rightarrow A \}_{\alpha \in \Omega}$ of linear maps satisfying (\ref{rel-rb-fam-iden}). 

Let $A$ be an associative algebra and $M$ be an $A$-bimodule. 
Let $\mu : A \otimes A \rightarrow A$ denotes the associative multiplication on $A$, i.e. $\mu (a , b) = a \cdot b$, for $a , b \in A$. Moreover, we denote the left and right $A$-actions on $M$ by the maps $l$ and $r$, respectively. Consider the graded Lie algebra 
\begin{align*}
\mathfrak{g} = \big( \oplus_{n \geq 0} \mathrm{Hom}_\Omega ( (A \oplus M)^{\otimes n+1}, A \oplus M), [~,~]_\Omega     \big)
\end{align*}
associated with the direct sum vector space $A \oplus M$. Then we have the followings.

\medskip

(i) The maps $\mu, l, r$ induces an element $\triangle \in  \mathrm{Hom}_\Omega ( (A \oplus M)^{\otimes 2}, A \oplus M)$ given by
\begin{align*}
\triangle_{\alpha, \beta} \big(  (a, u), (b, v)  \big) := \big(  \mu (a, b), ~ l(a, v) + r (u, b)  \big) = (a \cdot b ,~ a \cdot_M v + u \cdot_M b),
\end{align*}
for all $\alpha, \beta \in \Omega$ and $(a, u), (b, v ) \in A \oplus M$. The element $\triangle \in \mathrm{Hom}_\Omega ( (A \oplus M)^{\otimes 2}, A \oplus M)$ satisfies $[\triangle, \triangle]_\Omega = 0$.

\medskip

(ii) The graded subspace $\mathfrak{a} = \oplus_{n \geq 0} \mathrm{Hom}_\Omega ( M^{\otimes n+1}, A) \subset \mathfrak{g}$ is an abelian Lie subalgebra.

\medskip

Let $P: \mathfrak{g} \rightarrow \mathfrak{g}$ be the projection onto the subspace $\mathfrak{a}$. Then it follows that $(\mathfrak{g}, \mathfrak{a}, P, \triangle)$ is a $V$-data. As a consequence, the graded vector space $\mathfrak{a}$ inherits an $L_\infty$-algebra structure (see Theorem \ref{cons-thm} (a)). It turns out that all the structure maps of this $L_\infty$-algebra vanish except the map $l_2$. Therefore, the suspension $s \mathfrak{a} = \oplus_{n \geq 1} \mathrm{Hom}_\Omega (M^{\otimes n}, A)$ carries a graded Lie algebra structure with the bracket
\begin{align}\label{deri-bracket}
\llbracket f, g \rrbracket := (-1)^{m} ~ [ [\triangle, f]_\Omega, g ]_\Omega,
\end{align}
for $f  \in \mathrm{Hom}_\Omega (M^{\otimes m}, A)$ and $g \in \mathrm{Hom}_\Omega (M^{\otimes n}, A)$. As a collection, the bracket (\ref{deri-bracket}) is given by
\begin{align}\label{deri-bracker-explicit}
&\llbracket f, g \rrbracket_{\alpha_1, \ldots, \alpha_{m+n}} (u_1, \ldots, u_{m+n}) \\
&= \sum_{i=1}^m (-1)^{(i-1)n} f_{\alpha_1, \ldots, \alpha_i \cdots \alpha_{i+n}, \ldots, \alpha_{m+n}} \big(  u_1, \ldots, u_{i-1}, g_{\alpha_i, \ldots, \alpha_{i+n-1}} (u_i, \ldots, u_{i+n-1}) \cdot_M u_{i+n}, \ldots, u_{m+n} \big)  \nonumber\\
& ~~ - \sum_{i=1}^m (-1)^{in} f_{\alpha_1, \ldots, \alpha_i \cdots \alpha_{i+n}, \ldots, \alpha_{m+n}} \big(  u_1, \ldots, u_{i-1}, u_i \cdot_M g_{\alpha_{i+1}, \ldots, \alpha_{i+n}} (u_{i+1}, \ldots, u_{i+n}), \ldots, u_{m+n} \big)  \nonumber\\
& ~~ - (-1)^{mn} \bigg\{  \sum_{i=1}^n (-1)^{(i-1)m} g_{\alpha_1, \ldots, \alpha_i \cdots \alpha_{i+m}, \ldots, \alpha_{m+n}} \big(  u_1, \ldots, f_{\alpha_i, \ldots, \alpha_{i+m-1}} (u_i, \ldots, u_{i+m-1}) \cdot_M u_{i+m}, \ldots, u_{m+n} \big)  \nonumber\\
& ~~ - \sum_{i=1}^n (-1)^{im}~ g_{\alpha_1, \ldots, \alpha_i \cdots \alpha_{i+m}, \ldots, \alpha_{m+n}} \big(  u_1, \ldots, u_{i-1}, u_i \cdot_M f_{\alpha_{i+1}, \ldots, \alpha_{i+m}} (u_{i+1}, \ldots, u_{i+m}), \ldots, u_{m+n} \big)      \bigg\}  \nonumber\\
& ~~ + (-1)^{mn} \big\{ f_{\alpha_1, \ldots, \alpha_m} (u_1, \ldots, u_m) \cdot g_{\alpha_{m+1}, \ldots, \alpha_{m+n}} (u_{m+1}, \ldots, u_{m+n})  \nonumber\\
& ~~ \qquad \qquad \qquad  - (-1)^{mn} g_{\alpha_1, \ldots, \alpha_n} (u_1, \ldots, u_n) \cdot f_{\alpha_{n+1}, \ldots, \alpha_{m+n}} (u_{n+1}, \ldots, u_{m+n})  \big\}, \nonumber
\end{align}
for $f = \{ f_{\alpha_1, \ldots, \alpha_m} \}_{\alpha_1, \ldots, \alpha_m \in \Omega}$, $g = \{ g_{\alpha_1, \ldots, \alpha_n} \}_{\alpha_1, \ldots, \alpha_n \in \Omega}$ and $u_1, \ldots, u_{m+n} \in M$.
This graded Lie bracket can be extended to the graded vector space $\oplus_{n \geq 0} \mathrm{Hom}_\Omega (M^{\otimes n}, A)$ by assuming that $\mathrm{Hom}_\Omega (M^{\otimes 0}, A) = A$. The formulas for the extension are given by
\begin{align*}
\llbracket f, a \rrbracket_{\alpha_1, \ldots, \alpha_m} (u_1, \ldots, u_m) =~& \sum_{i=1}^m f_{\alpha_1, \ldots, \alpha_m} \big(  u_1, \ldots, u_{i-1}, a \cdot_M u_i - u_i \cdot_M a, u_{i+1}, \ldots, u_m  \big) \\
&+ f_{\alpha_1, \ldots, \alpha_m} (u_1, \ldots, u_m ) \cdot a - a \cdot f_{\alpha_1, \ldots, \alpha_m} (u_1, \ldots, u_m ), \\
\llbracket a, b \rrbracket =~& a \cdot b - b \cdot a,
\end{align*}
for $f \in \mathrm{Hom}_\Omega (M^{\otimes m}, A)$ and $a, b \in A = \mathrm{Hom}_\Omega (M^{\otimes 0}, A)$. Thus, we obtain the following result.

\begin{thm}\label{mc-gla-thm}
Let $A$ be an associative algebra and $M$ be an $A$-bimodule. Then $ \big(  \oplus_{n \geq 0} \mathrm{Hom}_\Omega (M^{\otimes n}, A), \llbracket ~, ~ \rrbracket \big)$ is a graded Lie algebra. A collection $R = \{ R_\alpha : M \rightarrow A \}_{\alpha \in \Omega}$ of linear maps satisfy the identities (\ref{rel-rb-fam-iden}) if and only if $R = \{ R_\alpha \}_{\alpha \in \Omega} \in \mathrm{Hom}_\Omega (M, A)$ is a Maurer-Cartan element in the graded Lie algebra $\big(  \oplus_{n \geq 0} \mathrm{Hom}_\Omega (M^{\otimes n}, A), \llbracket ~, ~ \rrbracket \big)$.
\end{thm}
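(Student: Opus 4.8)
The plan is to read off the entire statement from the $V$-data $(\mathfrak{g}, \mathfrak{a}, P, \triangle)$ assembled just before the theorem, rather than to check the graded Jacobi identity for $\llbracket~,~\rrbracket$ directly. First I would invoke Theorem~\ref{cons-thm}(a): because $(\mathfrak{g}, \mathfrak{a}, P, \triangle)$ is a $V$-data, the abelian subalgebra $\mathfrak{a} = \oplus_{n \geq 0}\mathrm{Hom}_\Omega(M^{\otimes n+1}, A)$ acquires an $L_\infty$-structure whose maps are the derived brackets (\ref{a-bracket}). The theorem then reduces to two things: that this $L_\infty$-structure is concentrated in its binary operation $l_2$ (so that, after the degree shift, $s\mathfrak{a} = \oplus_{n \geq 1}\mathrm{Hom}_\Omega(M^{\otimes n}, A)$ is a genuine graded Lie algebra with bracket (\ref{deri-bracket})), and that the Maurer-Cartan equation for $R \in \mathrm{Hom}_\Omega(M, A)$ unwinds to (\ref{rel-rb-fam-iden}).

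The step requiring the most care, and the one I regard as the crux, is showing $l_k = 0$ for all $k \geq 3$. I would run a type/arity bookkeeping on $\mathfrak{g}$, using that every element of $\mathfrak{a}$ has output in $A$ and all inputs in $M$, while $\triangle$ splits into $\mu : A \otimes A \to A$, $l : A \otimes M \to M$ and $r : M \otimes A \to M$. A direct inspection of the nonzero partial compositions (\ref{partial-maps}) shows that $[\triangle, a_1]_\Omega$ decomposes into a piece with output in $A$ and exactly one $A$-input, together with a piece with output in $M$ and all inputs in $M$; bracketing either piece with a second $a_2 \in \mathfrak{a}$ and retaining only the type-compatible compositions lands back inside $\mathfrak{a}$. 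Hence $[[\triangle, a_1]_\Omega, a_2]_\Omega \in \mathfrak{a}$, and since $\mathfrak{a}$ is an abelian subalgebra, every further bracket $[\cdots[[\triangle, a_1]_\Omega, a_2]_\Omega, \ldots, a_k]_\Omega$ with $k \geq 3$ vanishes. This gives $l_k = 0$ for $k \geq 3$, and expanding the surviving $l_2$ through (\ref{deri-bracket}) with the formula for $\triangle$ produces the explicit bracket (\ref{deri-bracker-explicit}).

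It remains to pass from $s\mathfrak{a} = \oplus_{n \geq 1}\mathrm{Hom}_\Omega(M^{\otimes n}, A)$ to the full space $\oplus_{n \geq 0}\mathrm{Hom}_\Omega(M^{\otimes n}, A)$ by adjoining $\mathrm{Hom}_\Omega(M^{\otimes 0}, A) = A$ in the bottom degree via the two displayed extension formulas. Here the only thing left to verify is the graded Jacobi identity in the cases where at least one argument lies in $A$; I would check these by a direct computation that invokes only the associativity of $\mu$ and the bimodule axioms for $l, r$, treating it as routine bookkeeping rather than deriving it from the $V$-data. This completes the proof that $\big(\oplus_{n \geq 0}\mathrm{Hom}_\Omega(M^{\otimes n}, A), \llbracket~,~\rrbracket\big)$ is a graded Lie algebra.

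For the Maurer-Cartan characterization I would specialize (\ref{deri-bracker-explicit}) to $m = n = 1$ and $f = g = R \in \mathrm{Hom}_\Omega(M, A)$. The two inner sums contribute $R_{\alpha\beta}(R_\alpha(u) \cdot_M v)$ and $R_{\alpha\beta}(u \cdot_M R_\beta(v))$, the symmetric block carrying the overall sign $-(-1)^{mn}$ reproduces the same two terms (since $f = g$), and the final block yields $-2\, R_\alpha(u) \cdot R_\beta(v)$; after halving this gives
\begin{align*}
\tfrac{1}{2}\llbracket R, R \rrbracket_{\alpha, \beta}(u, v) = R_{\alpha\beta}\big( R_\alpha(u) \cdot_M v + u \cdot_M R_\beta(v) \big) - R_\alpha(u) \cdot R_\beta(v).
\end{align*}
Thus the Maurer-Cartan equation $\tfrac{1}{2}\llbracket R, R \rrbracket = 0$ holds if and only if $R_\alpha(u) \cdot R_\beta(v) = R_{\alpha\beta}\big(R_\alpha(u) \cdot_M v + u \cdot_M R_\beta(v)\big)$ for all $u, v \in M$ and $\alpha, \beta \in \Omega$, which is exactly (\ref{rel-rb-fam-iden}). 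The sign bookkeeping in this last computation is where I would be most careful, but it is mechanical given the explicit bracket.
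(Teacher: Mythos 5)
Your proposal is correct and takes essentially the same route as the paper: the $V$-data of Subsection \ref{subsection-a graded lie} together with Theorem \ref{cons-thm}(a), the observation that all structure maps except $l_2$ vanish (which you justify by the type bookkeeping that the paper merely asserts with ``it turns out''), the by-hand extension to arity zero, and finally the specialization of (\ref{deri-bracker-explicit}) to $f=g=R$, which is exactly the paper's own proof of the Maurer--Cartan equivalence. The only discrepancy is an overall sign: your value of $\llbracket R,R \rrbracket_{\alpha,\beta}(u,v)$ is the negative of the one displayed in the paper's proof, but it agrees with (\ref{deri-bracker-explicit}) as printed, and in either case the vanishing of $\llbracket R,R \rrbracket$ is equivalent to (\ref{rel-rb-fam-iden}), so the conclusion is unaffected.
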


\begin{proof}
For any $R = \{ R_\alpha \}_{\alpha \in \Omega} \in \mathrm{Hom}_\Omega (M,A)$, we have from (\ref{deri-bracker-explicit}) that
\begin{align*}
\llbracket R, R \rrbracket_{\alpha , \beta} (u, v) = 2 \big(  R_\alpha (u) \cdot R_\beta (v) - R_{\alpha \beta} (R_\alpha (u) \cdot_M v ) - R_{\alpha \beta} (u \cdot_M R_\beta (v) ) \big),
\end{align*}
for $\alpha, \beta \in \Omega$ and $u, v \in M$. This shows that $\llbracket R, R \rrbracket = 0$ if and only if the collection $\{ R_\alpha \}_{\alpha \in \Omega}$ satisfy (\ref{rel-rb-fam-iden}). This completes the proof.
\end{proof}

Let $(A, M, \{ R_\alpha \}_{\alpha \in \Omega})$ be a relative Rota-Baxter family algebra. Then it follows from the previous theorem that $R = \{ R_\alpha \}_{\alpha \in \Omega}$ is a Maurer-Cartan element in the graded Lie algebra $\big(  \oplus_{n \geq 0} \mathrm{Hom}_\Omega (M^{\otimes n}, A), \llbracket ~, ~ \rrbracket \big)$. Hence it induces a differential
\begin{align*}
d_R : \mathrm{Hom}_\Omega (M^{\otimes n}, A) \rightarrow \mathrm{Hom}_\Omega (M^{\otimes n+1}, A), ~ d_R (f) = (-1)^n \llbracket R, f \rrbracket, \text{ for } f \in \mathrm{Hom}_\Omega (M^{\otimes n}, A).
\end{align*}
Explicitly, the differential $d_R$ is given by
\begin{align*}
&(d_R (f))_{\alpha_1, \ldots, \alpha_{n+1}} (u_1, \ldots, u_{n+1}) \\
&= R_{\alpha_1} (u_1) \cdot f_{\alpha_2, \ldots, \alpha_{n+1}} (u_2, \ldots, u_{n+1}) - R_{\alpha_1 \cdots \alpha_{n+1}} \big(  u_1 \cdot_M f_{\alpha_2, \ldots, \alpha_{n+1}} (u_2, \ldots, u_{n+1}) \big) \\
& ~~ + \sum_{i=1}^n (-1)^i ~ f_{\alpha_1, \ldots, \alpha_i \alpha_{i+1}, \ldots, \alpha_{n+1}} \big(  u_1, \ldots, u_{i-1}, R_{\alpha_i} (u_i) \cdot_M u_{i+1} + u_i \cdot_M R_{\alpha_{i+1}} (u_{i+1}), \ldots, u_{n+1}  \big) \\
& ~~ + (-1)^{n+1} f_{\alpha_1, \ldots, \alpha_n} (u_1, \ldots, u_n) \cdot R_{\alpha_{n+1}} (u_{n+1}) - (-1)^{n+1} R_{\alpha_1 \cdots \alpha_{n+1}} \big(  f_{\alpha_1, \ldots, \alpha_{n}} (u_1, \ldots, u_n) \cdot_M u_{n+1}  \big),
\end{align*}
for $\alpha_1, \ldots, \alpha_{n+1} \in \Omega$ and $u_1, \ldots, u_{n+1} \in M$. In other words, $\big\{ \mathrm{Hom}_\Omega (M^{\otimes \bullet}, A)  , d_R \big\}$ is a cochain complex. The corresponding cohomology groups are called the {\bf cohomology} induced by the collection $R = \{ R_\alpha \}_{\alpha \in \Omega}$, and they are denoted by $H^\bullet_R (M,A)$. 

\medskip

\subsection{A new $L_\infty$-algebra} In this subsection, we construct an $L_\infty$-algebra that characterizes relative Rota-Baxter family algebras as its Maurer-Cartan elements. We will use this characterization in the next section to define the cohomology of a given relative Rota-Baxter family algebra.

Let $A$ and $M$ be two vector spaces (not necessarily equipped with any additional structures). For any $k, l \geq 0$, let $\mathcal{A}^{k, l}$ be the direct sum of all possible $(k+l)$ tensor powers of $A$ and $M$ in which $A$ appears $k$ times and $M$ appears $l$ times. For example, we have
\begin{align*}
\mathcal{A}^{1,1} = (A \otimes M) \oplus (M \otimes A), \quad \mathcal{A}^{2,0} = A \otimes A ~~~~ \text{ and } ~~~~ \mathcal{A}^{0,2} = M \otimes M.
\end{align*}
Next, consider the graded Lie algebra $\mathfrak{g} = \big(   \oplus_{n \geq 0} \mathrm{Hom}_\Omega ((A \oplus M)^{\otimes n+1}, A \oplus M), [~,~]_\Omega \big)$
associated to the vector space $A \oplus M$. Note that the space $\mathrm{Hom}_\Omega ((A \oplus M)^{\otimes n+1}, A \oplus M)$ has a decomposition
\begin{align*}
\mathrm{Hom}_\Omega ((A \oplus M)^{\otimes n+1}, A \oplus M) \cong  \big(  \oplus_{k+l = n+1} \mathrm{Hom}_\Omega (\mathcal{A}^{k,l}, A)  \big) \oplus \big(   \oplus_{k+l = n+1} \mathrm{Hom}_\Omega (\mathcal{A}^{k,l}, M)   \big).
\end{align*}
An element $f \in \mathrm{Hom}_\Omega ((A \oplus M)^{\otimes n+1}, A \oplus M) $ is said to have {\bf bidegree} $k| l$ with $k+l = n$ if
\begin{align*}
f \in \mathrm{Hom}_\Omega (\mathcal{A}^{k+1,l}, A) \oplus \mathrm{Hom}_\Omega (\mathcal{A}^{k, l+1}, M) \subset \mathrm{Hom}_\Omega ((A \oplus M)^{\otimes n+1}, A \oplus M).
\end{align*}
We denote the set of all elements of $\mathrm{Hom}_\Omega ((A \oplus M)^{\otimes n+1}, A \oplus M)$ of bidegree $k|l$ with $k+l= n$ by $\mathrm{Hom}^{k|l}_\Omega ( (A \oplus M)^{\otimes n+1}, A \oplus M)$. Then we have
\begin{align*}
\mathrm{Hom}^{k|0}_\Omega ( (A \oplus M)^{\otimes k+1}, A \oplus M) \cong ~& \mathrm{Hom}_\Omega (A^{\otimes k+1}, A) \oplus \mathrm{Hom}_\Omega (\mathcal{A}^{k,1}, M), \\
\mathrm{Hom}^{-1|l}_\Omega ( (A \oplus M)^{\otimes l}, A \oplus M) \cong ~& \mathrm{Hom}_\Omega (M^{\otimes l}, A).
\end{align*}
It is not hard to see that the $\Omega$-Gerstenhaber bracket $[~,~]_\Omega$ satisfies the following.

\begin{prop}
For $f \in \mathrm{Hom}^{k|l}_\Omega ( (A \oplus M)^{\otimes k+l +1}, A \oplus M)$ and $g \in \mathrm{Hom}^{p|q}_\Omega ( (A \oplus M)^{\otimes p+q+1}, A \oplus M)$, we have
\begin{align*}
[f,g]_\Omega \in \mathrm{Hom}^{k+p| l+q}_\Omega ( (A \oplus M)^{\otimes k+l +p+q+1}, A \oplus M).
\end{align*}
\end{prop}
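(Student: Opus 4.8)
The plan is to reduce everything to the behavior of the partial compositions $\circ_i$ on homogeneous pieces, since the $\Omega$-Gerstenhaber bracket $[\,,\,]_\Omega$ is, by definition, a signed sum of terms of the form $f \circ_i g$ and $g \circ_i f$. First I would record a convenient reformulation of the bidegree. Assign to $A$ the weight $(1,0) \in \mathbb{Z}^2$ and to $M$ the weight $(0,1)$, and extend additively to each tensor word $X_1 \otimes \cdots \otimes X_r$ (with $X_j \in \{A, M\}$). A homogeneous map $\phi$ from a word of weight $w_{\mathrm{in}}$ to a single factor of weight $w_{\mathrm{out}}$ then has a well-defined \emph{weight defect} $w_{\mathrm{in}} - w_{\mathrm{out}} \in \mathbb{Z}^2$. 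Checking directly against the definition, a map in $\mathrm{Hom}_\Omega(\mathcal{A}^{k+1,l}, A)$ has defect $(k+1, l) - (1,0) = (k,l)$, while a map in $\mathrm{Hom}_\Omega(\mathcal{A}^{k,l+1}, M)$ has defect $(k, l+1) - (0,1) = (k,l)$; hence the bidegree-$k|l$ subspace is exactly the span of all homogeneous pieces of weight defect $(k,l)$. The $\Omega$-labels are irrelevant here, since they do not affect input/output types.

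Next I would prove that partial composition adds weight defects. Decomposing $A \oplus M$ and distributing tensor products over the direct sum, the composite $f \circ_i g$ is nonzero only on those homogeneous pieces where the output type of $g$ matches the $i$-th input type of $f$; so it suffices to treat pure homogeneous $f$ and $g$ with matching types at slot $i$. For such a pair, the inputs of $f \circ_i g$ are the inputs of $f$ with the $i$-th slot replaced by the full list of inputs of $g$, while the output is that of $f$. Thus the input weight of $f \circ_i g$ equals $w_{\mathrm{in}}(f) - w(\text{slot } i) + w_{\mathrm{in}}(g)$, and since the $i$-th input type of $f$ coincides with the output type of $g$ we have $w(\text{slot } i) = w_{\mathrm{out}}(g)$. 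Subtracting the output weight $w_{\mathrm{out}}(f)$ gives
\[
\mathrm{defect}(f \circ_i g) = \big(w_{\mathrm{in}}(f) - w_{\mathrm{out}}(f)\big) + \big(w_{\mathrm{in}}(g) - w_{\mathrm{out}}(g)\big) = \mathrm{defect}(f) + \mathrm{defect}(g).
\]
In particular, if $f$ has bidegree $k|l$ and $g$ has bidegree $p|q$, then every partial composition $f \circ_i g$ (and likewise $g \circ_i f$) lands in bidegree $(k+p)|(l+q)$.

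Finally I would assemble the conclusion: since $[f,g]_\Omega$ is a signed linear combination of the maps $f \circ_i g$ and $g \circ_i f$, and each summand has bidegree $(k+p)|(l+q)$ by the previous step, the bracket $[f,g]_\Omega$ itself has bidegree $(k+p)|(l+q)$, as claimed.

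I expect no genuine difficulty here; this is a bookkeeping statement. The only point requiring care is the verification that the operadic composition respects the $A$/$M$-decomposition, that is, that the matching-type condition at the glued slot is exactly what forces the defects to add, and that nothing in the $\Omega$-labelling disturbs this count. Everything else follows from the linearity of the bracket in each slot.
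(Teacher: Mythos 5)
Your proof is correct, and it is essentially the argument the paper has in mind: the paper states this proposition without proof (prefacing it only with ``It is not hard to see''), the intended justification being exactly the bookkeeping you carry out. Your weight-defect formulation — partial composition glues the output slot of one map to a matching input slot of the other, so defects add, and the bracket is a signed sum of such compositions, with the $\Omega$-labels playing no role in the type count — is a clean and complete way to write that down.
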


It follows from the previous proposition that $\mathfrak{a} = \oplus_{l \geq 0} \mathrm{Hom}^{-1 | l+1}_\Omega ( (A \oplus M)^{\otimes l+1}, A \oplus M)$ is an abelian Lie subalgebra of $\mathfrak{g}$. Let $P: \mathfrak{g} \rightarrow \mathfrak{g}$ be the projection onto the subspace $\mathfrak{a}$. Then the quadruple $(\mathfrak{g}, \mathfrak{a}, P, \triangle = 0)$ is a $V$-data.

The above proposition also suggests that the graded subspace 
\begin{align*}
\mathfrak{h} = \oplus_{n \geq 0}~ \mathrm{Hom}^{n|0}_\Omega ((A \oplus M)^{\otimes n+1}, A \oplus M)
\end{align*}
 is a graded Lie subalgebra of $\mathfrak{g}$. Hence by Theorem \ref{cons-thm} (b), the graded space 
$ s^{-1} \mathfrak{h} \oplus \mathfrak{a} = \oplus_{n \geq -1} (s^{-1} \mathfrak{h} \oplus \mathfrak{a})_n$ carries an $L_\infty$-algebra structure. Here the graded space $ s^{-1} \mathfrak{h} \oplus \mathfrak{a}$ is given by
\begin{align*}
    (s^{-1} \mathfrak{h} \oplus \mathfrak{a})_{-1} =~&  \mathfrak{h}_0 = \mathrm{Hom}_\Omega (A, A) \oplus \mathrm{Hom}_\Omega (M,M) ~~~ \quad (\text{we assume that } \mathrm{a}_{-1} = 0),\\
    (s^{-1} \mathfrak{h} \oplus \mathfrak{a})_n =~& \mathfrak{h}_{n+1} \oplus \mathfrak{a}_n = \mathrm{Hom}_\Omega (A^{\otimes n+2}, A) \oplus \mathrm{Hom}_\Omega (\mathcal{A}^{n+1, 1}, M) \oplus \mathrm{Hom}_\Omega (M^{\otimes n+1}, A), \text{ for } n \geq 0.
\end{align*}
The operations $\{ l_k \}_{k \geq 1}$ for the $L_\infty$-structure are given by
\begin{align*}
l_2 (s^{-1} q, s^{-1} q') =~& (-1)^{|q|} s^{-1} [q,q']_\Omega, \\
l_k (s^{-1} q, p_1, \ldots, p_{k-1}) =~& P [ \cdots [[ q, p_1]_\Omega, p_2 ]_\Omega, \ldots, p_{k-1} ]_\Omega, \text{ for } k \geq 2,
\end{align*}
for homogeneous elements $q, q' \in \mathfrak{h}$ (considered as elements $s^{-1}q,~ s^{-1} q' \in s^{-1} \mathfrak{h}$) and homogeneous elements $p_1, \ldots, p_{k-1} \in \mathfrak{a}$. Upto permutations of the above inputs, all other maps vanish.

\medskip

Note that there is an embedding
\begin{align}\label{graded-embedd}
   \oplus_{n \geq 0} \mathrm{Hom} \big(  (A \oplus M)^{\otimes n+1}, A \oplus M  \big) ~ \hookrightarrow ~ \oplus_{n \geq 0} \mathrm{Hom}_\Omega \big(  (A \oplus M)^{\otimes n+1}, A \oplus M  \big), ~ f \mapsto \widetilde{f}
\end{align}
given by
\begin{align*}
    (\widetilde{f})_{\alpha_1, \ldots, \alpha_{n+1}} \big(  (a_1, u_1), \ldots, (a_{n+1}, u_{n+1}) \big) = f \big(  (a_1, u_1), \ldots, (a_{n+1}, u_{n+1}) \big),
\end{align*}
for all $\alpha_1, \ldots, \alpha_{n+1} \in \Omega$ and $(a_1, u_1), \ldots, (a_{n+1}, u_{n+1}) \in A \oplus M$. This restricts to an embedding
\begin{align*}
  \oplus_{n \geq 0}  \mathrm{Hom}^{n | 0} \big( (A \oplus M)^{\otimes n+1}, A \oplus M  \big) \hookrightarrow  \oplus_{n \geq 0} \mathrm{Hom}^{n | 0}_\Omega \big( (A \oplus M)^{\otimes n+1}, A \oplus M  \big) , ~ f \mapsto \widetilde{f}.
\end{align*}
which yields an embedding 
\begin{align*}
\mathfrak{h}' = \oplus_{n \geq 0} \mathrm{Hom}^{n|0} \big( (A \oplus M)^{\otimes n+1}, A \oplus M  \big) = \oplus_{n \geq 0} \big( \mathrm{Hom}(A^{\otimes n+1}, A) \oplus \mathrm{Hom}(\mathcal{A}^{n,1}, M) \big) ~ \hookrightarrow \mathfrak{h}
\end{align*}
at the level of graded vector spaces. With this embedding, the $L_\infty$-algebra structure on $s^{-1} \mathfrak{h} \oplus \mathfrak{a}$ induces an $L_\infty$-algebra structure on the graded vector space $s^{-1} \mathfrak{h}' \oplus \mathfrak{a}$. More precisely, we have the following.

\begin{thm}
Let $A$ and $M$ be two vector spaces. Then the graded vector space $s^{-1} \mathfrak{h}' \oplus \mathfrak{a}$
carries an $L_\infty$-algebra structure with the operations $\{ l_k \}_{k \geq 1}$ are given by
\begin{align*}
l_2 (s^{-1} q, s^{-1} q') =~& (-1)^{|q|} s^{-1} [ \widetilde{q}, \widetilde{q'}]_\Omega, \\
l_k (s^{-1} q, p_1, \ldots, p_{k-1}) =~& P [ \cdots [[ \widetilde{q}, p_1]_\Omega, p_2 ]_\Omega, \ldots, p_{k-1} ]_\Omega, \text{ for } k \geq 2.
\end{align*}
\end{thm}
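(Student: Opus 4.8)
The plan is to recognize the asserted operations as a special case of the derived-bracket construction of Theorem~\ref{cons-thm}(b), applied not to $\mathfrak{h}$ but to the image of $\mathfrak{h}'$ under the embedding~(\ref{graded-embedd}); write $\widetilde{\mathfrak{h}'} \subset \mathfrak{g}$ for this image. Since $(\mathfrak{g}, \mathfrak{a}, P, \triangle = 0)$ has already been shown to be a $V$-data, the whole statement reduces to verifying that $\widetilde{\mathfrak{h}'}$ is a graded Lie subalgebra of $\mathfrak{g}$. Once this is established, the hypothesis $[\triangle, \widetilde{\mathfrak{h}'}] \subset \widetilde{\mathfrak{h}'}$ of Theorem~\ref{cons-thm}(b) holds vacuously because $\triangle = 0$, and Theorem~\ref{cons-thm}(b) then produces an $L_\infty$-algebra on $s^{-1} \widetilde{\mathfrak{h}'} \oplus \mathfrak{a}$; transporting this structure back along the isomorphism $s^{-1}\mathfrak{h}' \oplus \mathfrak{a} \cong s^{-1}\widetilde{\mathfrak{h}'} \oplus \mathfrak{a}$ (identity on $\mathfrak{a}$, $q \mapsto \widetilde{q}$ on $\mathfrak{h}'$) gives exactly the maps written in the statement.

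The heart of the argument is the claim that the embedding $f \mapsto \widetilde{f}$ is a morphism of graded Lie algebras from the ordinary Gerstenhaber algebra $\oplus_{n} \mathrm{Hom}((A \oplus M)^{\otimes n+1}, A \oplus M)$ into $\mathfrak{g}$. I would prove this at the level of partial compositions. Unwinding~(\ref{partial-maps}) for two label-independent families $\widetilde{f}, \widetilde{g}$, each component $(\widetilde{f} \circ_i \widetilde{g})_{\alpha_1, \ldots, \alpha_{m+n-1}}$ equals $f(\ldots, g(\ldots), \ldots)$ for every choice of labels, the product $\alpha_i \cdots \alpha_{i+n-1}$ of semigroup elements being irrelevant precisely because neither $f$ nor $g$ depends on its labels. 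Hence $\widetilde{f} \circ_i \widetilde{g} = \widetilde{f \circ_i g}$, and summing with the Koszul signs in the definition of the bracket yields $[\widetilde{f}, \widetilde{g}]_\Omega = \widetilde{[f,g]}$. In particular the embedding is injective, grading- and bidegree-preserving, and bracket-preserving.

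It then remains to observe that $\mathfrak{h}' = \oplus_{n} \mathrm{Hom}^{n|0}((A \oplus M)^{\otimes n+1}, A \oplus M)$ is closed under the ordinary Gerstenhaber bracket, which is the non-family analogue of the bidegree computation recorded in the Proposition above: the bracket of elements of bidegrees $k|0$ and $p|0$ has bidegree $(k+p)|0$. Combined with the previous paragraph this shows that $\widetilde{\mathfrak{h}'}$ is a graded Lie subalgebra of $\mathfrak{h}$, hence of $\mathfrak{g}$. Applying Theorem~\ref{cons-thm}(b) and specializing the structure maps of Theorem~\ref{voro-thm} to $\triangle = 0$ then produces precisely the displayed $l_2$ and $l_k$; here $l_1$ vanishes because $\widetilde{\mathfrak{h}'} \subset \mathrm{Ker}(P)$ (its bidegree $n|0$ is disjoint from the bidegrees $-1|l+1$ spanning $\mathfrak{a}$) and the purely $\mathfrak{a}$-valued brackets vanish with $\triangle$. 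The only genuinely computational obstacle is the interchange identity $\widetilde{f} \circ_i \widetilde{g} = \widetilde{f \circ_i g}$; the rest is bookkeeping with bidegrees and the vanishing of $\triangle$.
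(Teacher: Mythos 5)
Your proposal is correct and takes essentially the same route as the paper: the $V$-data $(\mathfrak{g}, \mathfrak{a}, P, \triangle = 0)$, the bidegree proposition, Theorem \ref{cons-thm} (b), and the embedding $f \mapsto \widetilde{f}$, with only the organizational difference that you apply Theorem \ref{cons-thm} (b) directly to the image $\widetilde{\mathfrak{h}'}$ rather than to $\mathfrak{h}$ followed by restriction. Your explicit verification that $\widetilde{f} \circ_i \widetilde{g} = \widetilde{f \circ_i g}$, hence $[\widetilde{f}, \widetilde{g}]_\Omega = \widetilde{[f,g]}$, is exactly the closure fact the paper leaves implicit when asserting that the $L_\infty$-structure on $s^{-1}\mathfrak{h} \oplus \mathfrak{a}$ induces one on $s^{-1}\mathfrak{h}' \oplus \mathfrak{a}$ (the paper only records this coincidence of brackets later, inside the proof of Theorem \ref{mc-main-thm}).
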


\medskip

The importance of the above $L_\infty$-algebra is given by the following Maurer-Cartan characterization of relative Rota-Baxter family algebras.

\begin{thm}\label{mc-main-thm}
Let $A$ and $M$ be two vector spaces. Suppose there are linear maps
\begin{align*}
\mu : A \otimes A \rightarrow A, \quad l : A \otimes M \rightarrow M, \quad r : M \otimes A \rightarrow M
\end{align*}
and a collection of linear maps $R = \{ R_\alpha : M \rightarrow A \}_{\alpha \in \Omega}$ labelled by the elements of the semigroup $\Omega$. Then the triple $\big( A=  (A, \mu),  M= (M,l,r), \{ R_\alpha \}_{\alpha \in \Omega} \big)$ is a relative Rota-Baxter family algebra if and only if $\alpha = (s^{-1} \triangle, R) \in (s^{-1} \mathfrak{h}' \oplus \mathfrak{a})_0$ is a Maurer-Cartan element in the $L_\infty$-algebra $(s^{-1} \mathfrak{h}' \oplus \mathfrak{a}, \{ l_k \}_{k \geq 1}),$ where $\triangle = \mu + l + r.$
\end{thm}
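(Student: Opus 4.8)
The plan is to substitute the candidate element $\alpha = (s^{-1}\triangle, R)$ directly into the Maurer-Cartan equation $\sum_{k \geq 1} \frac{1}{k!}\, l_k(\alpha, \ldots, \alpha) = 0$ and to decompose the resulting identity according to the splitting $(s^{-1}\mathfrak{h}' \oplus \mathfrak{a})_1 = s^{-1}\mathfrak{h}'_2 \oplus \mathfrak{a}_1$ into which it necessarily lands, since each $l_k$ has degree $1$ and $\alpha \in (s^{-1}\mathfrak{h}' \oplus \mathfrak{a})_0$. First I would record that the only nonzero structure maps are $l_2$ and the maps $l_k$ ($k \geq 2$) listed in the preceding theorem (in particular $l_1 = 0$, as the underlying $V$-data has $\triangle = 0$ and $P$ annihilates $\mathfrak{h}'$). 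Because $\alpha$ has a component $s^{-1}\triangle \in s^{-1}\mathfrak{h}'$ and a component $R \in \mathfrak{a}$, and because these structure maps are nonzero only on inputs consisting of two elements of $s^{-1}\mathfrak{h}'$ (the map $l_2$) or of exactly one element of $s^{-1}\mathfrak{h}'$ together with elements of $\mathfrak{a}$ (the maps $l_k$), expanding multilinearly leaves only a short and controlled list of terms.

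The key technical step is a bidegree count based on the Proposition describing the behaviour of $[\,\cdot\,,\,\cdot\,]_\Omega$. Here $\widetilde{\triangle}$ has bidegree $1|0$ and $R$ has bidegree $-1|1$, so the iterated bracket $[\cdots[[\widetilde{\triangle}, R]_\Omega, R]_\Omega, \ldots, R]_\Omega$ with $j$ copies of $R$ has bidegree $(1-j)\,|\,j$. The projection $P$ onto $\mathfrak{a}$ retains only bidegree $-1|(l+1)$, i.e. first index exactly $-1$; hence $P$ annihilates this iterated bracket unless $1-j = -1$, that is $j = 2$. Consequently, in the $\mathfrak{a}$-component only the term coming from $l_3(s^{-1}\triangle, R, R)$ survives, while the $l_2(s^{-1}\triangle, R)$ contribution (bidegree $0|1$) and every contribution for $k \geq 4$ (first bidegree $\leq -2$) is killed by $P$. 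In the $s^{-1}\mathfrak{h}'$-component, only $l_2(s^{-1}\triangle, s^{-1}\triangle) = (-1)^{|\triangle|}\, s^{-1}[\widetilde{\triangle}, \widetilde{\triangle}]_\Omega$ can occur. After accounting for the combinatorial factors (graded symmetry makes all placements of $R$ equal, all Koszul signs are trivial since both components of $\alpha$ have degree $0$, and $\frac{1}{3!}$ times the $3$ placements yields $\frac12$), the Maurer-Cartan equation splits into the two independent equations $[\widetilde{\triangle}, \widetilde{\triangle}]_\Omega = 0$ and $P[[\widetilde{\triangle}, R]_\Omega, R]_\Omega = 0$.

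It then remains to interpret these two equations. For the first, I would use that the embedding $\widetilde{(\,\cdot\,)}$ of (\ref{graded-embedd}) is compatible with the partial compositions, hence is a morphism of graded Lie algebras, so that $[\widetilde{\triangle}, \widetilde{\triangle}]_\Omega = \widetilde{[\triangle, \triangle]}$ for the ordinary Gerstenhaber bracket on $A \oplus M$; this vanishes if and only if $[\triangle, \triangle] = 0$, which by the classical Gerstenhaber encoding says precisely that $(A, \mu)$ is associative and $(M, l, r)$ is an $A$-bimodule. For the second, I would observe that $P[[\widetilde{\triangle}, R]_\Omega, R]_\Omega$ is, up to sign and normalization, the derived bracket $\llbracket R, R \rrbracket$ of (\ref{deri-bracket}); the explicit computation already carried out in the proof of Theorem \ref{mc-gla-thm} gives $\llbracket R, R \rrbracket_{\alpha,\beta}(u,v) = 2\big(R_\alpha(u)\cdot R_\beta(v) - R_{\alpha\beta}(R_\alpha(u)\cdot_M v) - R_{\alpha\beta}(u\cdot_M R_\beta(v))\big)$, so the second equation holds if and only if $\{R_\alpha\}_{\alpha\in\Omega}$ satisfies the relative Rota-Baxter family identity (\ref{rel-rb-fam-iden}). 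Combining the two interpretations shows that $\alpha = (s^{-1}\triangle, R)$ is a Maurer-Cartan element exactly when $\big(A, M, \{R_\alpha\}_{\alpha\in\Omega}\big)$ is a relative Rota-Baxter family algebra.

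The hard part will be the bidegree truncation together with the sign and combinatorial bookkeeping of the second paragraph: one must verify that the a priori infinite Maurer-Cartan series genuinely collapses to these two finite pieces, and that the single surviving $\mathfrak{a}$-valued term matches $\llbracket R, R \rrbracket$ with the correct factor, since a slip there would alter which identity is being encoded.
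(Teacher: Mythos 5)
Your proposal is correct and follows essentially the same route as the paper's own proof: direct substitution of $(s^{-1}\triangle, R)$ into the Maurer--Cartan series, truncation of the series by (bi)degree reasons so that only $l_2(s^{-1}\triangle,s^{-1}\triangle)$ and $l_3(s^{-1}\triangle,R,R)$ survive, and then interpretation of $[\widetilde{\triangle},\widetilde{\triangle}]_\Omega=0$ via the classical Gerstenhaber encoding of associativity and bimodules, and of $[[\widetilde{\triangle},R]_\Omega,R]_\Omega=0$ via Theorem \ref{mc-gla-thm}. Your explicit bidegree bookkeeping is just a more detailed rendering of the paper's ``homogeneous degree reason,'' so there is no substantive difference.
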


\begin{proof}
    Note that $l_1 \big( (s^{-1} \triangle, R) \big) = 0$ (from the definition of $l_1$). Moreover, from the homogeneous degree reason, we have
    \begin{align*}
        [[[ \widetilde{\triangle}, R]_\Omega , R]_\Omega, R]_\Omega = 0.
    \end{align*}
    Therefore, $l_k \big( (s^{-1} \triangle, R), \ldots, (s^{-1} \triangle, R)   \big) = 0$ for $k \geq 4$. Hence we have
    \begin{align*}
       & \sum_{k=1}^\infty \frac{1}{k!} ~ l_k \big(    (s^{-1} \triangle, R), \ldots, (s^{-1} \triangle, R) \big) \\
       & = \frac{1}{2!}~ l_2 \big(  (s^{-1} \triangle, R), (s^{-1} \triangle, R)  \big) + \frac{1}{3!} ~ l_3 \big(  (s^{-1} \triangle, R), (s^{-1} \triangle, R), (s^{-1} \triangle, R) \big) \\
       & = \big(  - \frac{1}{2} s^{-1} [\widetilde{\triangle}, \widetilde{\triangle} ]_\Omega , ~ \frac{1}{2} [[ \widetilde{\triangle}, R]_\Omega, R]_\Omega  \big).
    \end{align*}
    This shows that $\alpha = (s^{-1} \triangle, R)$ is a Maurer-Cartan element in the $L_\infty$-algebra $(s^{-1} \mathfrak{h}' \oplus \mathfrak{a}, \{ l_k \}_{k \geq 1})$ if and only if $[\widetilde{\triangle}, \widetilde{\triangle} ]_\Omega = 0$ and $ [[ \widetilde{\triangle}, R]_\Omega, R]_\Omega   = 0$.

    Note that the $\Omega$-Gerstenhaber bracket $[~,~]_\Omega$ coincides with the usual Gerstenhaber bracket (denoted by $[~,~]_\mathsf{G}$) on the embedded subspace given in (\ref{graded-embedd}). Hence
    \begin{align*}
         [\widetilde{\triangle}, \widetilde{\triangle}]_\Omega = 0 ~ \Longleftrightarrow ~ [\triangle, \triangle]_\mathsf{G} = 0.
    \end{align*}
This holds if and only if $(A, \mu)$ is an associative algebra and $(M,l,r)$  is an $(A, \mu)$-bimodule (see \cite[Proposition 2.11]{das-rota}).

    On the other hand, $[[ \widetilde{\triangle}, R]_\Omega, R]_\Omega   = 0$ holds if and only if $\llbracket R, R \rrbracket = 0$. By Theorem \ref{mc-gla-thm}, this condition is equivalent to the fact that the collection $R = \{ R_\alpha \}_{\alpha \in \Omega}$ satisfy the identities  (\ref{rel-rb-fam-iden}). Combining these, we get that $\alpha = (s^{-1} \triangle, R)$ is a Maurer-Cartan element if and only if $\big( (A, \mu), (M,l, r), \{ R_\alpha\}_{\alpha \in \Omega}  \big)$ is a relative Rota-Baxter family algebra.
\end{proof}

\section{Cohomology of relative Rota-Baxter family algebras}\label{sec4}
In this section, we define the cohomology of relative Rota-Baxter family algebras using their  Maurer-Cartan characterizations. In particular, we define the cohomology of Rota-Baxter family algebras. Finally, we obtain a long exact sequence that connects various cohomology groups.

\medskip

\subsection{Cohomology of relative Rota-Baxter family algebras}
Let $(A, M, \{ R_\alpha \}_{\alpha \in \Omega})$ be a relative Rota-Baxter family algebra. Then we have seen in Theorem \ref{mc-main-thm} that $\alpha = (s^{-1} \triangle, R)$ is a Maurer-Cartan element in the $L_\infty$-algebra $(s^{-1} \mathfrak{h}' \oplus \mathfrak{a}, \{ l_k \}_{k \geq 1})$, where $\triangle = \mu + l + r$. Hence we can consider the $L_\infty$-algebra $(s^{-1} \mathfrak{h}' \oplus \mathfrak{a}, \{ l_k^{(s^{-1} \triangle, R) } \}_{k \geq 1})$ twisted by the Maurer-Cartan element $\alpha = (s^{-1} \triangle, R)$. Thus, it follows that the degree $1$ map $l_1^{(s^{-1} \triangle, R)} : s^{-1} \mathfrak{h}' \oplus \mathfrak{a} \rightarrow s^{-1} \mathfrak{h}' \oplus \mathfrak{a}$ satisfies $( l_1^{(s^{-1} \triangle, R)}  )^2 = 0$. We will use this observation to define the cochain complex associated with the relative Rota-Baxter family algebra.

For each $n \geq 0$, we denote the space of $n$-cochains by $C^n_\mathrm{rRBf} (A,M, \{ R_\alpha \}_{\alpha \in \Omega})$ and is defined by
\begin{align*}
    C^n_\mathrm{rRBf} (A,M, \{ R_\alpha \}_{\alpha \in \Omega}) = \begin{cases}
        0 & \text{ if } n =0, \\
        \mathrm{Hom}(A,A) \oplus \mathrm{Hom}(M,M) & \text{ if } n =1,\\
        \mathrm{Hom}(A^{\otimes n}, A) \oplus \mathrm{Hom}(\mathcal{A}^{n-1,1}, M) \oplus \mathrm{Hom}_\Omega (M^{\otimes n-1}, A) & \text{ if } n \geq 2.
    \end{cases}
\end{align*}
An element $(f, g) \in C^1_\mathrm{rRBf} (A, M, \{ R_\alpha \}_{\alpha \in \Omega})$ gives rise to an element $(s^{-1} (f + g), 0) \in (s^{-1} \mathfrak{h}' \oplus \mathfrak{a})_{{-1}}$. On the other hand, for any $n \geq 2$, the cochain group $C^n_\mathrm{rRBf} (A, M, \{ R_\alpha \}_{\alpha \in \Omega})$ can be identified with $(s^{-1} \mathfrak{h}' \oplus \mathfrak{a})_{n-2}$ via
\begin{align*}
\mathrm{Hom} (A^{\otimes n}, A) \oplus \mathrm{Hom} (\mathcal{A}^{n-1,1}, M) \oplus \mathrm{Hom}_\Omega (M^{\otimes n-1}, A) \ni (f,g, \gamma) ~ \leftrightsquigarrow ~ (s^{-1} (f + g), \gamma) \in (s^{-1} \mathfrak{h}' \oplus \mathfrak{a})_{n-2}.
\end{align*}
Using these identifications, we define a map $\delta_\mathrm{rRBf} : C^n_\mathrm{rRBf} (A, M, \{ R_\alpha \}_{\alpha \in \Omega}) \rightarrow C^{n+1}_\mathrm{rRBf} (A, M, \{ R_\alpha \}_{\alpha \in \Omega})$ by
\begin{align*}
\delta_\mathrm{rRBf} \big( (f,g) \big) =~& - l_1^{(s^{-1} \triangle, R)} \big( s^{-1} (f+g), 0   \big), \text{ for } (f,g) \in C^1_\mathrm{rRBf} (A, M, \{ R_\alpha \}_{\alpha \in \Omega}),\\
\delta_\mathrm{rRBf} \big( (f, g, \gamma) \big) =~& (-1)^{n-2}~ l_1^{(s^{-1} \triangle, R)} \big( s^{-1} (f + g), \gamma   \big), \text{ for } (f, g, \gamma) \in C^{n \geq 2}_\mathrm{rRBf} (A, M, \{ R_\alpha \}_{\alpha \in \Omega}).
\end{align*}
Explicitly, the map $\delta_\mathrm{rRBf} : C^n_\mathrm{rRBf} (A, M, \{ R_\alpha \}_{\alpha \in \Omega}) \rightarrow C^{n+1}_\mathrm{rRBf} (A, M, \{ R_\alpha \}_{\alpha \in \Omega})$ is given by
\begin{align*}
\delta_\mathrm{rRBf} \big(  (f,g) \big) =~& \big( \delta_\mathrm{Hoch} (f), \delta_\mathrm{Hoch}^f (g) , h_R (f,g)   \big), \text{ for } (f,g) \in C^1_\mathrm{rRBFf} (A, M, \{ R_\alpha \}_{\alpha \in \Omega}), \\
\delta_\mathrm{rRBf} \big(  (f, g, \gamma) \big) =~& \big( \delta_\mathrm{Hoch} (f), \delta_\mathrm{Hoch}^f (g) , \delta_R (\gamma) + h_R (f,g)   \big), \text{ for } (f, g, \gamma) \in C^{n \geq 2}_\mathrm{rRBFf} (A, M, \{ R_\alpha \}_{\alpha \in \Omega}).
\end{align*}
In the above expressions, $\delta_\mathrm{Hoch}$ is the Hochschild coboundary map of the algebra $A$ with coefficients in the adjoint $A$-bimodule. For any $f \in \mathrm{Hom}(A^{\otimes n}, A)$, the map $\delta^f_\mathrm{Hoch} : \mathrm{Hom}(\mathcal{A}^{n-1,1}, M ) \rightarrow \mathrm{Hom}(\mathcal{A}^{n,1}, M)$ is given by
\begin{align*}
\big(  \delta^f_\mathrm{Hoch} (g)  \big) (a_1, \ldots, a_{n+1}) =~& (l+r) \big(  a_1, (f+g) (a_2, \ldots, a_{n+1})   \big) \\
&+ \sum_{i=1}^n (-1)^i ~ g \big( a_1, \ldots, (\mu + l + r) (a_i, a_{i+1}), \ldots, a_{n+1} \big) \\
&+ (-1)^{n+1} (l+r) \big(  (f+g) (a_1, \ldots, a_n) , a_{n+1} \big),
\end{align*}
for $g \in \mathrm{Hom}(\mathcal{A}^{n-1,1}, M)$ and $a_1 \otimes \cdots \otimes a_{n+1} \in \mathcal{A}^{n,1}$. Finally, $\delta_R : \mathrm{Hom}_\Omega (M^{\otimes n}, A) \rightarrow \mathrm{Hom}_\Omega (M^{\otimes n+1}, A)$ is the differential induced by the collection of operators $R = \{ R_\alpha \}_{\alpha \in \Omega}$ (see subsection \ref{subsection-a graded lie}), and
\begin{align*}
h_R : \mathrm{Hom} (A^{\otimes n}, A) \oplus \mathrm{Hom} (\mathcal{A}^{n-1,1}, M) \rightarrow \mathrm{Hom}_\Omega (M^{\otimes n}, A)
\end{align*}
is given by
\begin{align*}
(h_R (f,g)&)_{\alpha_1, \ldots, \alpha_n} (u_1, \ldots, u_n) \\
&= (-1)^n \big\{ f \big(  R_{\alpha_1} (u_1) , \ldots, R_{\alpha_n} (u_n)  \big) - \sum_{r=1}^n R_{\alpha_1 \cdots \alpha_n} \big( g ( R_{\alpha_1} (u_1), \ldots, u_r, \ldots, R_{\alpha_n} (u_n) )  \big)     \big\}.
\end{align*}

\noindent Note that, upto some sign, the map $\delta_\mathrm{rRBf}$ is same with the map $l_1^{(s^{-1} \triangle, R)}$. Hence it follows that $(\delta_\mathrm{rRBf})^2 = 0$. In other words, $\{  C^\bullet_\mathrm{rRBf} (A,M, \{ R_\alpha \}_{\alpha \in \Omega}) , \delta_\mathrm{rRBf} \}$ is a cochain complex.

Let $Z^n_\mathrm{rRBf}  (A,M, \{ R_\alpha \}_{\alpha \in \Omega})   $ be the set of all $n$-cocycles and $B^n_\mathrm{rRBf}  (A,M, \{ R_\alpha \}_{\alpha \in \Omega})$ be the set of all $n$-coboundaries. Then we have $B^n_\mathrm{rRBf}  (A,M, \{ R_\alpha \}_{\alpha \in \Omega})  ~  \subset   ~Z^n_\mathrm{rRBf}  (A,M, \{ R_\alpha \}_{\alpha \in \Omega})   $, for all $n \geq 0$.  The quotients
\begin{align*}
H^n_\mathrm{rRBf}  (A,M, \{ R_\alpha \}_{\alpha \in \Omega})  := \frac{ Z^n_\mathrm{rRBf}  (A,M, \{ R_\alpha \}_{\alpha \in \Omega})   }{  B^n_\mathrm{rRBf}  (A,M, \{ R_\alpha \}_{\alpha \in \Omega})}, \text{ for } n \geq 0
\end{align*}
are called the {\bf cohomology groups} of the relative Rota-Baxter family algebra $(A, M, \{ R_\alpha \}_{\alpha \in \Omega})$.

\medskip

\subsection{Cohomology of Rota-Baxter family algebras} Using the general case of relative Rota-Baxter family algebras, here we define the cohomology of a Rota-Baxter family algebra.

Let $(A, \{ R_\alpha \}_{\alpha \in \Omega})$ be a Rota-Baxter family algebra. For each $n \geq 0$, we define the space of $n$-cochains $C^n_\mathrm{RBf} (A, \{ R_\alpha \}_{\alpha \in \Omega})$ by
\begin{align*}
   C^n_\mathrm{RBf} (A, \{ R_\alpha \}_{\alpha \in \Omega}) = \begin{cases}
        0 & \text{ if } n =0, \\
        \mathrm{Hom}(A,A) & \text{ if } n =1,\\
        \mathrm{Hom}(A^{\otimes n}, A) \oplus \mathrm{Hom}_\Omega (A^{\otimes n-1}, A) & \text{ if } n \geq 2.
    \end{cases}
\end{align*}
Then there is an embedding $i : C^\bullet_\mathrm{RBf} (A, \{ R_\alpha \}_{\alpha \in \Omega}) \hookrightarrow  C^\bullet_\mathrm{rRBf} (A, A,  \{ R_\alpha \}_{\alpha \in \Omega})$ given by
\begin{align*}
i (f ) :=~& (f,f), \text{ for } f \in C^1_\mathrm{RBf} (A, \{ R_\alpha \}_{\alpha \in \Omega}),\\
i(f , \gamma) :=~& (f, f, \gamma), \text{ for } (f, \gamma) \in C^{n \geq 2}_\mathrm{RBf} (A, \{ R_\alpha \}_{\alpha \in \Omega}).
\end{align*}
Here we consider $(A, A, \{ R_\alpha \}_{\alpha \in \Omega})$ as a relative Rota-Baxter family algebra and $C^n_\mathrm{rRBf} (A, A, \{ R_\alpha \}_{\alpha \in \Omega})$ is the $n$-th cochain group of the relative Rota-Baxter family algebra $(A, A, \{ R_\alpha \}_{\alpha \in \Omega})$, where $A$ is equipped with the adjoint $A$-bimodule structure.

It is easy to see that the differential $\delta_\mathrm{rRBf} : C^\bullet_\mathrm{rRBf} (A, A, \{ R_\alpha \}_{\alpha \in \Omega}) \rightarrow C^{\bullet +1}_\mathrm{rRBf} (A, A, \{ R_\alpha \}_{\alpha \in \Omega})$ satisfies
\begin{align*}
\delta_\mathrm{rRBf} \big( \mathrm{im} (i) \big) \subset \mathrm{im}(i).
\end{align*}
Hence $\delta_\mathrm{rRBf}$ induces a differential (denoted by $\delta_\mathrm{RBf}$) on the embedded subspace $C^\bullet_\mathrm{RBf} (A, \{ R_\alpha \}_{\alpha \in \Omega})$. The differential $\delta_\mathrm{RBf} : C^\bullet_\mathrm{RBf} (A, \{ R_\alpha \}_{\alpha \in \Omega}) \rightarrow C^{\bullet +1 }_\mathrm{RBf} (A, \{ R_\alpha \}_{\alpha \in \Omega})$ is given by
\begin{align*}
\delta_\mathrm{RBf} (f) =~& \big(  \delta_\mathrm{Hoch}(f), h_R (f)  \big), \text{ for } f \in \mathrm{Hom}(A,A) = C^1_\mathrm{RBf} (A, \{ R_\alpha \}_{\alpha \in \Omega}), \\
\delta_\mathrm{RBf} \big( (f, \gamma ) \big) =~& \big( \delta_\mathrm{Hoch} (f), \delta_R (\gamma) + h_R (f)   \big), \text{ for } (f, \gamma) \in C^{n \geq 2}_\mathrm{RBf} (A, \{ R_\alpha \}_{\alpha \in \Omega}).
\end{align*}
Here the map $h_R : \mathrm{Hom} (A^{\otimes n}, A) \rightarrow \mathrm{Hom}_\Omega (A^{\otimes n}, A)$ is given by
\begin{align*}
(h_R (f)&)_{\alpha_1, \ldots, \alpha_n} (a_1, \ldots, a_n) \\
&= (-1)^n \big\{ f \big(  R_{\alpha_1} (a_1) , \ldots, R_{\alpha_n} (a_n)  \big) - \sum_{r=1}^n R_{\alpha_1 \cdots \alpha_n} \big( f ( R_{\alpha_1} (a_1), \ldots, a_r, \ldots, R_{\alpha_n} (a_n) )  \big)     \big\},
\end{align*}
and $\delta_R : \mathrm{Hom}_\Omega (A^{\otimes n}, A) \rightarrow \mathrm{Hom}_\Omega (A^{\otimes n+1}, A) $  is the differential induced by the collection $R= \{ R_\alpha \}_{\alpha \in \Omega}$.
The cohomology groups of the cochain complex $\{ C^\bullet_\mathrm{RBf} (A, \{ R_\alpha \}_{\alpha \in \Omega}), \delta_\mathrm{RBf} \}$ are called the {\bf cohomology} of the Rota-Baxter family algebra $(A, \{ R_\alpha \}_{\alpha \in \Omega})$.

\medskip

Let $(A, \{ R_\alpha \}_{\alpha \in \Omega})$ be a Rota-Baxter family algebra. Then we have the following cochain complexes:

\medskip

$\diamond$ The Hochschild cochain complex $\{ C^\bullet_\mathrm{Hoch} (A,A), \delta_\mathrm{Hoch} \}$ of the associative algebra $A$ with coefficients in the adjoint $A$-bimodule.

\medskip

$\diamond$ The cochain complex $\{ \mathrm{Hom}_\Omega (A^{\otimes \bullet}, A), d_R \}$ induced by the collection of operators $ R = \{ R_\alpha \}_{\alpha \in \Omega}$.

\medskip

$\diamond$ The cochain complex $\{ C^\bullet_\mathrm{RBf} (A, \{ R_\alpha \}_{\alpha \in \Omega} ) , \delta_\mathrm{RBf} \}$ of the Rota-Baxter family algebra $(A, \{ R_\alpha \}_{\alpha \in \Omega})$.

\medskip 

\noindent It is easy to see that there is a short exact sequence of cochain complexes
\begin{align*}
0 \rightarrow \{    \mathrm{Hom}_\Omega (A^{\otimes \bullet -1}, A), d_R \} \xrightarrow{i} \{   C^\bullet_\mathrm{RBf} (A, \{ R_\alpha \}_{\alpha \in \Omega}), \delta_\mathrm{RBf}  \}  \xrightarrow{p} \{  C^\bullet_\mathrm{Hoch} (A,A), \delta_\mathrm{Hoch} \}  \rightarrow 0,
\end{align*}
where $i (\gamma) = (0, \gamma)$ and $p (f, \gamma) = f$. As a consequence, we obtain the following.

\begin{thm}
Let $(A, \{ R_\alpha \}_{\alpha \in \Omega})$ be a Rota-Baxter family algebra. Then there is a long exact sequence
\begin{align*}
 \cdots \longrightarrow H^{n-1}_R (A,A) \longrightarrow H^n_\mathrm{RBf} (A, \{ R_\alpha \}_{\alpha \in \Omega}) \longrightarrow H^n_\mathrm{Hoch} (A,A) \longrightarrow H^n_R (A,A) \longrightarrow \cdots .
\end{align*}
\end{thm}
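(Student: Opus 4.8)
The plan is to obtain the asserted long exact sequence as the one induced in cohomology by the short exact sequence of cochain complexes exhibited immediately before the statement, via the standard zig-zag (snake) lemma of homological algebra. The only genuine work is to confirm that the displayed sequence really is a short exact sequence of cochain complexes; once that is in place, the long exact sequence is automatic, and the task reduces to bookkeeping.

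First I would check that the inclusion $i(\gamma) = (0,\gamma)$ and the projection $p(f,\gamma) = f$ are cochain maps. For $i$, the explicit formula for $\delta_\mathrm{RBf}$ gives $\delta_\mathrm{RBf}(0,\gamma) = (\delta_\mathrm{Hoch}(0),\, \delta_R(\gamma) + h_R(0)) = (0, \delta_R(\gamma))$, since $h_R$ is linear and vanishes on the zero cochain; thus $i$ intertwines the differential $d_R$ (equivalently $\delta_R$) with $\delta_\mathrm{RBf}$. For $p$, the first component of $\delta_\mathrm{RBf}(f,\gamma)$ is precisely $\delta_\mathrm{Hoch}(f)$, so $p \circ \delta_\mathrm{RBf} = \delta_\mathrm{Hoch} \circ p$. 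Injectivity of $i$ and surjectivity of $p$ are immediate from their definitions, and exactness in the middle follows from the identification $\mathrm{ker}(p) = \{(f,\gamma) : f = 0\} = \mathrm{im}(i)$.

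The one point requiring care is the degree bookkeeping. The sub-complex appears as $\{\mathrm{Hom}_\Omega(A^{\otimes \bullet - 1}, A), d_R\}$, i.e. the $R$-induced complex shifted up by one, because in cochain degree $n$ the Rota-Baxter family cochain group $C^n_\mathrm{RBf}(A, \{R_\alpha\}_{\alpha \in \Omega})$ contributes the piece $\mathrm{Hom}_\Omega(A^{\otimes n-1}, A)$. Consequently the cohomology of the sub-complex in degree $n$ is $H^{n-1}_R(A,A)$, which is exactly what appears in the asserted sequence, and the connecting homomorphism out of $H^n_\mathrm{Hoch}(A,A)$ lands in the degree-$(n{+}1)$ cohomology of the sub-complex, namely $H^{(n+1)-1}_R(A,A) = H^n_R(A,A)$.

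Finally I would invoke the zig-zag lemma to produce the generic long exact sequence $\cdots \to H^n(\mathrm{sub}) \to H^n_\mathrm{RBf} \to H^n_\mathrm{Hoch}(A,A) \to H^{n+1}(\mathrm{sub}) \to \cdots$ and rewrite $H^n(\mathrm{sub}) = H^{n-1}_R(A,A)$ and $H^{n+1}(\mathrm{sub}) = H^n_R(A,A)$ using the shift above, which reproduces the displayed sequence. The connecting map $H^n_\mathrm{Hoch}(A,A) \to H^n_R(A,A)$ is, as usual, given by a diagram chase: for a Hochschild cocycle $f \in \mathrm{Hom}(A^{\otimes n}, A)$, lift it to $(f,0) \in C^n_\mathrm{RBf}(A, \{R_\alpha\}_{\alpha \in \Omega})$ along $p$, apply the differential to get $\delta_\mathrm{RBf}(f,0) = (\delta_\mathrm{Hoch}(f),\, h_R(f)) = (0, h_R(f))$, and send the class $[f]$ to the class of $h_R(f) \in \mathrm{Hom}_\Omega(A^{\otimes n}, A)$ in $H^n_R(A,A)$; well-definedness, independence of choices, and exactness are all guaranteed by the lemma. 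I expect no essential obstacle beyond carefully tracking this single degree shift, which is the only place where the statement deviates from the textbook template.
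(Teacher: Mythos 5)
Your proposal is correct and follows exactly the paper's route: the paper obtains the theorem as an immediate consequence of the displayed short exact sequence $0 \to \{\mathrm{Hom}_\Omega(A^{\otimes \bullet-1},A), d_R\} \to \{C^\bullet_\mathrm{RBf}, \delta_\mathrm{RBf}\} \to \{C^\bullet_\mathrm{Hoch}(A,A), \delta_\mathrm{Hoch}\} \to 0$ together with the standard long exact sequence in cohomology. The verifications you supply (that $i$ and $p$ are cochain maps, exactness, the degree shift, and the explicit connecting map $[f] \mapsto [h_R(f)]$) are precisely the routine details the paper leaves implicit under ``it is easy to see.''
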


\medskip

\section{Deformations of relative Rota-Baxter family algebras}\label{sec5}
In this section, we study formal and infinitesimal deformations of a relative Rota-Baxter family algebra. In particular, we show that the set of all equivalence classes of infinitesimal deformations of a relative Rota-Baxter family algebra $(A, M, \{ R_\alpha \}_{\alpha \in \Omega})$ has a bijection with the second cohomology group $H^2_\mathrm{rRBf} (A, M, \{ R_\alpha \}_{\alpha \in \Omega}).$

\medskip

Let $(A, M, \{ R_\alpha \}_{\alpha \in \Omega})$ be a relative Rota-Baxter family algebra. We denote the associative multiplication on $A$ by the map $\mu$, and the maps defining the left and right $A$-actions on $M$ by $l$ and $r$, respectively. Let $A[[t]]$ (resp. $M[[t]]$) be the space of all formal power series in $t$ with coefficients from $A$ (resp. $M$). Then $A[[t]]$ and $M[[t]]$ are both ${\bf k}[[t]]$-modules.

\begin{defn}
A {\bf formal one-parameter deformation} of a relative Rota-Baxter family algebra $(A, M, \{ R_\alpha \}_{\alpha \in \Omega})$ is a quadruple $(\mu_t, l_t, r_t, \{ (R_t)_\alpha \}_{\alpha \in \Omega})$  that consists of three formal power series of the form
\begin{align*}
\mu_t = \sum_{i \geq 0} t^i \mu_i, \quad l_t = \sum_{i \geq 0} t^i l_i, \quad r_t = \sum_{i \geq 0} t^i r_i
\end{align*}
and  a collection of power series $\{ (R_t)_\alpha = \sum_{i \geq 0} t^i R_{i, \alpha} \}_{\alpha \in \Omega}$ labelled by the elements of $\Omega$ (where each $\mu_i \in \mathrm{Hom} (A \otimes A, A)$, $l_i \in \mathrm{Hom}(A \otimes M, M)$, $r_i \in \mathrm{Hom} (M \otimes A, M)$ and $R_{i, \alpha} \in \mathrm{Hom}(M,A)$ with $\mu_0 = \mu$, $l_0 = l$, $r_0 = r$ and $R_{0, \alpha} = R_{\alpha}$, for all $\alpha \in \Omega$) such that

\medskip

$\diamond$ the ${\bf k} [[t]]$-linear operation $\mu_t : A[[t]] \otimes_{{\bf k}[[t]]} A[[t]] \rightarrow A[[t]]$ makes the pair $(A[[t]], \mu_t)$ into an associative algebra over the ring ${\bf k}[[t]]$,

\medskip

$\diamond$ the ${\bf k} [[t]]$-linear operations $l_t : A[[t]] \otimes_{{\bf k}[[t]]} M[[t]] \rightarrow M[[t]]$ and $r_t : M[[t]] \otimes_{{\bf k}[[t]]} A[[t]] \rightarrow M[[t]]$ makes the triple $(M[[t]], l_t, r_t)$ into a bimodule over the associative algebra $(A[[t]], \mu_t)$,

\medskip

$\diamond$ the collection of ${\bf k} [[t]]$-linear maps $\{ (R_t)_\alpha : M[[t]] \rightarrow A[[t]] \}_{\alpha \in \Omega}$ labelled by the elements of $\Omega$ makes the triple $\big( (A[[t]], \mu_t),  (M[[t]], l_t, r_t), \{ (R_t)_\alpha \}_{\alpha \in \Omega} \big)$ into a relative Rota-Baxter family algebra over the base ring ${\bf k}[[t]].$
\end{defn}

It follows from the above definition that a quadruple $(\mu_t, l_t, r_t,   \{ (R_t)_\alpha \}_{\alpha \in \Omega}   )$ is a formal one-parameter deformation if and only if the following system of equations must hold: For all $n \geq 0$, $a, b, c \in A$, $u, v \in M$ and $\alpha, \beta \in \Omega$,
\begin{align}
\sum_{i+j=n} \mu_i(\mu_j(a,b),c) &=  \displaystyle\sum_{i+j=n} \mu_i(a,\mu_j(b,c)), \label{def-eqn1}\\
\sum_{i+j=n} l_i(\mu_j(a,b),u) &= \displaystyle\sum_{i+j=n} l_i(a,l_j(b,u)),\\
\sum_{i+j=n} r_i(l_j(a,u),b) &= \displaystyle\sum_{i+j=n} l_i(a,r_j(u,b),\\
\sum_{i+j=n} r_i(r_j(u,a),b)&=\displaystyle\sum_{i+j=n} r_i(u,\mu_j(a,b)),\\
\sum_{i+j+k = n} \mu_i \big(  R_{j, \alpha} (u), R_{k, \beta}(v)  \big) &= \sum_{i+j+k = n} R_{i, \alpha \beta} \big( l_j (   R_{k, \alpha} (u), v) + r_j (u, R_{k, \beta} (v))   \big).  \label{def-eqn5}
\end{align}
These equations are called the {\bf deformation equations}. Note that the Equations (\ref{def-eqn1})-(\ref{def-eqn5}) are hold for $n=0$ as $(A,M, \{ R_\alpha \}_{\alpha \in \Omega})$ is a relative Rota-Baxter family algebra. If we write the above equations for $n=1$, we obtain
\begin{align}
\mu_1(a\cdot b,c) + \mu_1(a,b)\cdot c &= \mu_1(a,b\cdot c) + a\cdot\mu_1(b,c),\label{def-eqn11}\\
l_1(a\cdot b,u) + \mu_1(a,b)\cdot_M u &= l_1(a,b\cdot_M u) + a\cdot_M l_1(b,u),\label{def-eqn22}\\
r_1(a\cdot_M u,b) + l_1(a,u)\cdot_M b &=  l_1(a,u\cdot_M b) + a\cdot_M r_1(u,b),\label{def-eqn33}\\
r_1(u\cdot_M a,b) + r_1(u,a)\cdot_M b &= r_1(u,a\cdot b) + u\cdot_M \mu_1(a,b),\label{def-eqn44}\\
R_{1, \alpha}(u) \cdot R_\beta (v) + R_\alpha (u) \cdot R_{1, \beta} (v) + \mu_1 (R_\alpha (u), R_\beta (v)) &= R_{\alpha \beta} \big( l_1 (R_\alpha (u), v) + R_{1, \alpha} (u) \cdot_M v  \big) \label{def-eqn55} \\
 + R_{1, \alpha \beta} \big(  R_\alpha (u) \cdot_M v  \big) + R_{\alpha \beta} \big(  r_1 ~& (u, R_\beta (v)) + u \cdot_M R_{1, \beta} (v)  \big) + R_{1, \alpha \beta} (u \cdot_M R_\beta (v)),  \nonumber
\end{align}
for all $a, b, c \in A$, $u, v \in M$ and $\alpha, \beta \in \Omega$. The identity (\ref{def-eqn11}) is equivalent to $(\delta_\mathrm{Hoch} (\mu_1)) (a,b,c) = 0$. To understand the remaining identities, we first define an element $\beta_1 \in \mathrm{Hom}(\mathcal{A}^{1,1}, M)$ by
\begin{align}\label{beta}
\beta_1 (a, u) := l_1 (a, u) ~~~~ \text{ and } ~~~~ \beta_1 (u, a) := r_1 (u, a), \text{ for } a \in A, u \in M.
\end{align}
Then the identities (\ref{def-eqn22}), (\ref{def-eqn33}), (\ref{def-eqn44}) are respectively equivalent to the followings:
\begin{align*}
\big( \delta^{\mu_1}_\mathrm{Hoch} (\beta_1)  \big) (a,b, u) = 0, \quad  \big( \delta^{\mu_1}_\mathrm{Hoch} (\beta_1)  \big) (a, u, b) = 0 \quad \text{ and } \quad \big( \delta^{\mu_1}_\mathrm{Hoch} (\beta_1)  \big) (u, a, b) = 0.
\end{align*}
Combining these, we get $\delta^{\mu_1}_\mathrm{Hoch} (\beta_1) = 0$. Finally, the identity (\ref{def-eqn55}) is equivalent to the condition
\begin{align*}
\big( \delta_R (R_1) + h_R (\mu_1, \beta_1)  \big)_{\alpha, \beta} (u, v) = 0,
\end{align*}
where $R_1 = \{ R_{1, \alpha} \}_{\alpha \in \Omega}$. Hence we have
\begin{align*}
\delta_{\mathrm{rRBf}} (\mu_1, \beta_1, R_1) = \big( \delta_\mathrm{Hoch} (\mu_1), ~ \delta^{\mu_1}_\mathrm{Hoch} (\beta_1), ~  \delta_R (R_1) + h_R (\mu_1, \beta_1)  \big) = 0.
\end{align*}
Therefore, the element $(\mu_1, \beta_1, R_1) \in Z^2_\mathrm{rRBf} (A, M, \{ R_\alpha \}_{\alpha \in \Omega})$ is a $2$-cocycle. This is called the {\bf infinitesimal} of the given formal one-parameter deformation $(\mu_t, l_t, r_t, \{ (R_t)_\alpha \}_{\alpha \in \Omega})$.

\begin{defn}
Let $(\mu_t, l_t, r_t, \{ (R_t)_\alpha \}_{\alpha \in \Omega})$ and $(\mu'_t, l'_t, r'_t, \{ (R'_t)_\alpha \}_{\alpha \in \Omega})$ be two formal one-parameter deformations of a relative Rota-Baxter family algebra $(A, M, \{ R_\alpha \}_{\alpha \in \Omega})$. They are said to be {\bf equivalent} if there exists a pair $(\varphi_t, \psi_t)$ of formal power series of the form
\begin{align}\label{formal-form}
\varphi_t = \sum_{i \geq 0} t^i \varphi_i   \quad \text{ and } \quad \psi_t = \sum_{i \geq 0} t^i \psi_i
\end{align}
(where each $\varphi_i \in \mathrm{Hom}(A,A)$ and $\psi_i \in \mathrm{Hom}(A,A)$ with $\varphi_0 = \mathrm{id}_A$ and $\psi_0 = \mathrm{id}_M$) such that the pair of ${\bf k}[[t]]$-linear maps
\begin{align*}
(\varphi_t, \psi_t) : \big(  (A[[t]], \mu_t), (M[[t]], l_t, r_t), \{ (R_t)_\alpha \}_{\alpha \in \Omega} \big) \rightsquigarrow \big(  (A[[t]], \mu'_t), (M[[t]], l'_t, r'_t), \{ (R'_t)_\alpha \}_{\alpha \in \Omega} \big)
\end{align*}
is a morphism of relative Rota-Baxter family algebras.
\end{defn}

It follows that the deformations $(\mu_t, l_t, r_t, \{ (R_t)_\alpha \}_{\alpha \in \Omega})$ and $(\mu'_t, l'_t, r'_t, \{ (R'_t)_\alpha \}_{\alpha \in \Omega})$ are equivalent if there exists a pair $(\varphi_t, \psi_t)$ of formal power series of the form (\ref{formal-form}) such that the following system of equations are held: for all $n \geq 0$, $a, b \in A$, $u \in M$ and $\alpha \in \Omega$,
\begin{align}
\sum_{i+j=n} \varphi_i(\mu_j(a,b)) &=\displaystyle\sum_{i+j+k=n}\mu'_i(\varphi_j(a),\varphi_k(b)), \label{equiv-def-eqn1}\\
\sum_{i+j=n} \psi_i (l_j(a,u)) &=\displaystyle\sum_{i+j+k=n}l'_i(\varphi_j(a),\psi_j(u)), \label{equiv-def-eqn2}\\
\sum_{i+j=n} \psi_i (r_j(u,a)) &=\displaystyle\sum_{i+j+k=n}r'_i(\psi_j(u),\varphi_k(a)), \label{equiv-def-eqn3}\\
\sum_{i+j = n} \varphi_i \circ R_{j, \alpha} &= \sum_{i+j= n} R'_{i, \alpha} \circ \psi_j, \label{equiv-def-eqn4}
\end{align}
It is easy to see that all the above identities are held for $n=0$ as $\varphi_0 = \mathrm{id}_A$ and $\psi_0 = \mathrm{id}_M$. By writing the Equation (\ref{equiv-def-eqn1}) for $n=1$, we simply get
\begin{align*}
\mu_1 - \mu_1' = \delta_\mathrm{Hoch} (\varphi_1).
\end{align*}
Similarly, the Equations (\ref{equiv-def-eqn2})-(\ref{equiv-def-eqn3}) yields $\beta_1 - \beta_1' = \delta_\mathrm{Hoch}^{\varphi_1} (\psi_1)$. Finally, the Equation (\ref{equiv-def-eqn4}) for $n=1$ gives rise to
\begin{align*}
R_1 - R_1' = h_R (\varphi_1, \psi_1).
\end{align*}
Thus, we get
\begin{align*}
(\mu_1, \beta_1, R_1) - (\mu_1', \beta_1', R_1') = \big( \delta_\mathrm{Hoch} (\varphi_1), ~ \delta_\mathrm{Hoch}^{\varphi_1} (\psi_1), ~  h_R (\varphi_1, \psi_1) \big) = \delta_\mathrm{rRBf} (\varphi_1, \psi_1).
\end{align*}
In conclusion, we have the following result.

\begin{thm}
Let $(A, M, \{ R_\alpha \}_{\alpha \in \Omega})$ be a relatibe Rota-Baxter family algebra. Then the infinitesimal of any formal one-parameter deformation of the relative Rota-Baxter family algebra $(A, M, \{ R_\alpha \}_{\alpha \in \Omega})$ is a $2$-cocycle in the cochain complex $\{ C^\bullet_\mathrm{rRBf} (A, M, \{ R_\alpha \}_{\alpha \in \Omega}), \delta_\mathrm{rRBf}  \}$. Moreover, the corresponding cohomology class in $H^2_\mathrm{rRBf} (A, M, \{ R_\alpha \}_{\alpha \in \Omega}) $ depends only on the equivalence class of the deformation. Hence there is a well-defined map
\begin{align}\label{defor-map}
(\text{set of all formal one-parameter deformations})/ \sim  ~ \longrightarrow ~ H^2_\mathrm{rRBf} (A, M, \{ R_\alpha \}_{\alpha \in \Omega}).
\end{align}
\end{thm}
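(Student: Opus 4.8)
The plan is to read off the infinitesimal $(\mu_1, \beta_1, R_1)$ from the first-order part of a deformation and to recognize the order-one deformation equations as the statement that $\delta_\mathrm{rRBf}$ vanishes on this triple. Concretely, I would start from the deformation equations (\ref{def-eqn1})--(\ref{def-eqn5}), specialize them to $n=1$ to obtain (\ref{def-eqn11})--(\ref{def-eqn55}), and then match each group termwise against a component of the explicit formula for $\delta_\mathrm{rRBf}$. Equation (\ref{def-eqn11}) is exactly $\delta_\mathrm{Hoch}(\mu_1) = 0$. After packaging $l_1$ and $r_1$ into the single bimodule cochain $\beta_1 \in \mathrm{Hom}(\mathcal{A}^{1,1}, M)$ via (\ref{beta}), the three scattered identities (\ref{def-eqn22}), (\ref{def-eqn33}), (\ref{def-eqn44}) reassemble into $\delta^{\mu_1}_\mathrm{Hoch}(\beta_1) = 0$. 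Finally, (\ref{def-eqn55}) is precisely $\delta_R(R_1) + h_R(\mu_1, \beta_1) = 0$ with $R_1 = \{ R_{1,\alpha} \}_{\alpha \in \Omega}$. Assembling the three components gives $\delta_\mathrm{rRBf}(\mu_1, \beta_1, R_1) = 0$, so the infinitesimal is a $2$-cocycle.

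For the second assertion I would compare two equivalent deformations using the equivalence equations (\ref{equiv-def-eqn1})--(\ref{equiv-def-eqn4}). Specializing these to $n=1$ (the $n=0$ case being automatic from $\varphi_0 = \mathrm{id}_A$ and $\psi_0 = \mathrm{id}_M$) yields $\mu_1 - \mu_1' = \delta_\mathrm{Hoch}(\varphi_1)$ from (\ref{equiv-def-eqn1}), the identity $\beta_1 - \beta_1' = \delta^{\varphi_1}_\mathrm{Hoch}(\psi_1)$ from (\ref{equiv-def-eqn2})--(\ref{equiv-def-eqn3}), and $R_1 - R_1' = h_R(\varphi_1, \psi_1)$ from (\ref{equiv-def-eqn4}). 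Combining these three relations shows that the difference of the two infinitesimals equals $\delta_\mathrm{rRBf}(\varphi_1, \psi_1)$, a $2$-coboundary. Hence the two infinitesimals represent the same class in $H^2_\mathrm{rRBf}(A, M, \{ R_\alpha \}_{\alpha \in \Omega})$, and the assignment sending a deformation to the class of its infinitesimal descends to the quotient by equivalence, producing the well-defined map (\ref{defor-map}).

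The only genuine work is the verification that the order-one deformation (respectively equivalence) equations coincide termwise with the components of $\delta_\mathrm{rRBf}$ (respectively with $\delta_\mathrm{rRBf}$ applied to $(\varphi_1, \psi_1)$). The main obstacle is therefore bookkeeping: one must track the signs introduced by the twisting $l_1^{(s^{-1} \triangle, R)}$ and by the identification of the cochain groups with the graded pieces of $s^{-1} \mathfrak{h}' \oplus \mathfrak{a}$, and one must check that the three separate bimodule conditions (\ref{def-eqn22})--(\ref{def-eqn44}), corresponding to the three placements of the module slot in $\mathcal{A}^{2,1}$, fuse correctly into the single Hochschild-type condition $\delta^{\mu_1}_\mathrm{Hoch}(\beta_1) = 0$. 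Since the explicit formulas for $\delta_\mathrm{Hoch}$, $\delta^{f}_\mathrm{Hoch}$, $\delta_R$ and $h_R$ have already been recorded, this reduces to a direct comparison and introduces no new phenomena beyond those already handled in the preceding discussion.
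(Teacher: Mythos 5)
Your proposal is correct and follows essentially the same route as the paper: the paper's own argument is precisely the discussion preceding the theorem statement, which specializes the deformation equations (\ref{def-eqn1})--(\ref{def-eqn5}) to $n=1$, identifies (\ref{def-eqn11}) with $\delta_\mathrm{Hoch}(\mu_1)=0$, fuses (\ref{def-eqn22})--(\ref{def-eqn44}) into $\delta^{\mu_1}_\mathrm{Hoch}(\beta_1)=0$ via the cochain $\beta_1$ of (\ref{beta}), identifies (\ref{def-eqn55}) with $\delta_R(R_1)+h_R(\mu_1,\beta_1)=0$, and then extracts from the $n=1$ equivalence equations (\ref{equiv-def-eqn1})--(\ref{equiv-def-eqn4}) that the difference of two infinitesimals equals $\delta_\mathrm{rRBf}(\varphi_1,\psi_1)$. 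Your identification of the bookkeeping (signs from the twisted differential and the fusion of the three bimodule conditions) as the only real work is consistent with how the paper treats it.
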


\medskip

It follows from the above theorem that an equivalence class of formal one-parameter deformations give rise to a cohomology class in $H^2_\mathrm{rRBf} (A, M, \{ R_\alpha \}_{\alpha \in \Omega})$. However, the converse need not be true. In the following, we consider infinitesimal deformations of a relative Rota-Baxter family algebra $(A, M, \{ R_\alpha \}_{\alpha \in \Omega})$ and show that any cohomology class in $H^2_\mathrm{rRBf} (A, M, \{ R_\alpha \}_{\alpha \in \Omega})$ yields an infinitesimal deformation. In particular, the map (\ref{defor-map}) has a generalization while considering infinitesimal deformations (instead of formal one-parameter deformations) that turns out to be bijective.

\begin{defn}
Let $(A, M, \{ R_\alpha \}_{\alpha \in \Omega})$ be a relative Rota-Baxter family algebra. An {\bf infinitesimal deformation} of $(A, M, \{ R_\alpha \}_{\alpha \in \Omega})$ is a quadruple $(\mu_t, l_t, r_t, \{ (R_t)_\alpha \}_{\alpha \in \Omega})$ of the following truncated sums
\begin{align*}
\mu_t = \mu+ t \mu_1, \quad l_t = l + t l_1, \quad r_t = r+t r_1 ~~~ \text{ and } ~~~ \{ (R_t)_\alpha = R_\alpha + t R_{1, \alpha} \}_{\alpha \in \Omega}
\end{align*}
such that the triple $\big(  (A[[t]]/(t^2), \mu_t), (M[[t]]/(t^2), l_t, r_t), \{ (R_t)_\alpha \}_{\alpha \in \Omega} \big)$ is a relative Rota-Baxter family algebra over the ring ${\bf k}[[t]]/(t^2)$.
\end{defn}

Equivalence between two infinitesimal deformations can be defined similarly as we defined the same for formal one-parameter deformations. We are now ready to prove the main result of this section.

\begin{thm}
Let $(A, M, \{ R_\alpha \}_{\alpha \in \Omega})$ be a relative Rota-Baxter family algebra. Then there is a bijection between the set of all equivalence classes of infinitesimal deformations of $(A, M, \{ R_\alpha \}_{\alpha \in \Omega})$ and the second cohomology group $H^2_\mathrm{rRBf} (A, M, \{ R_\alpha \}_{\alpha \in \Omega}).$
\end{thm}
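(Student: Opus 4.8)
The plan is to exhibit the desired bijection as the descent, to equivalence classes and cohomology respectively, of an explicit bijection between infinitesimal deformations and the space $Z^2_\mathrm{rRBf}(A, M, \{ R_\alpha \}_{\alpha \in \Omega})$ of $2$-cocycles. First I would consider the map sending an infinitesimal deformation $(\mu_t, l_t, r_t, \{ (R_t)_\alpha \}_{\alpha \in \Omega})$ to its infinitesimal $(\mu_1, \beta_1, R_1)$, where $\beta_1 \in \mathrm{Hom}(\mathcal{A}^{1,1}, M)$ is assembled from $l_1, r_1$ as in (\ref{beta}) and $R_1 = \{ R_{1, \alpha} \}_{\alpha \in \Omega}$. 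The computation preceding this statement shows that, modulo $t^2$, the defining axioms of a relative Rota-Baxter family algebra over ${\bf k}[[t]]/(t^2)$ are precisely the first-order deformation equations (\ref{def-eqn11})--(\ref{def-eqn55}), and that these are in turn equivalent to $\delta_\mathrm{rRBf}(\mu_1, \beta_1, R_1) = 0$. Hence this map indeed lands in $Z^2_\mathrm{rRBf}(A, M, \{ R_\alpha \}_{\alpha \in \Omega})$.

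For surjectivity of the unreduced map I would reverse this observation: given any $2$-cocycle $(\mu_1, \beta_1, R_1)$, set $l_1 := \beta_1|_{A \otimes M}$ and $r_1 := \beta_1|_{M \otimes A}$, and form the truncated sums $\mu_t = \mu + t \mu_1$, $l_t = l + t l_1$, $r_t = r + t r_1$ and $(R_t)_\alpha = R_\alpha + t R_{1, \alpha}$. Since any product of two positive-degree terms vanishes modulo $t^2$, the only nontrivial instances of the deformation equations are those at $n=1$, which hold precisely because $(\mu_1, \beta_1, R_1)$ is a cocycle. Thus $\big( (A[[t]]/(t^2), \mu_t), (M[[t]]/(t^2), l_t, r_t), \{ (R_t)_\alpha \}_{\alpha \in \Omega} \big)$ is a relative Rota-Baxter family algebra over ${\bf k}[[t]]/(t^2)$, i.e.\ an infinitesimal deformation whose infinitesimal is $(\mu_1, \beta_1, R_1)$. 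Injectivity is immediate, since the infinitesimal determines the truncated structure maps, so this gives a bijection between infinitesimal deformations and $Z^2_\mathrm{rRBf}(A, M, \{ R_\alpha \}_{\alpha \in \Omega})$.

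It then remains to check that this bijection carries the equivalence relation on infinitesimal deformations exactly onto the coboundary relation on cocycles. In one direction, the first-order analysis above (restricting the equivalence equations (\ref{equiv-def-eqn1})--(\ref{equiv-def-eqn4}) to $n=1$) shows that if two infinitesimal deformations are equivalent via $(\varphi_t, \psi_t) = (\mathrm{id}_A + t \varphi_1, \mathrm{id}_M + t \psi_1)$, then their infinitesimals satisfy $(\mu_1, \beta_1, R_1) - (\mu_1', \beta_1', R_1') = \delta_\mathrm{rRBf}(\varphi_1, \psi_1)$, so the associated cohomology classes in $H^2_\mathrm{rRBf}(A, M, \{ R_\alpha \}_{\alpha \in \Omega})$ coincide. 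Conversely, given cocycles differing by a coboundary $\delta_\mathrm{rRBf}(\varphi_1, \psi_1)$, I would define $\varphi_t = \mathrm{id}_A + t \varphi_1$ and $\psi_t = \mathrm{id}_M + t \psi_1$ and verify that this pair is a morphism of relative Rota-Baxter family algebras over ${\bf k}[[t]]/(t^2)$; modulo $t^2$ the morphism conditions reduce to exactly the first-order identities encoding $\delta_\mathrm{rRBf}(\varphi_1, \psi_1)$, so the two deformations are equivalent. Passing to quotients then yields the desired bijection
\[
(\text{infinitesimal deformations})/\!\sim ~ \longleftrightarrow ~ H^2_\mathrm{rRBf}(A, M, \{ R_\alpha \}_{\alpha \in \Omega}).
\]

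The routine parts are the bijectivity of the unreduced correspondence and the forward direction of the equivalence statement, both of which follow directly from the computations already carried out in the text. The main point requiring care is the converse equivalence step: one must confirm that the truncated pair $(\varphi_t, \psi_t)$ built from a coboundary genuinely intertwines the two deformations as a ${\bf k}[[t]]/(t^2)$-linear morphism, i.e.\ that no higher-order obstruction survives. In the infinitesimal setting this is exactly where working modulo $t^2$ pays off: all equations beyond first order are vacuous, so the verification collapses to re-reading the coboundary identity $\delta_\mathrm{rRBf}(\varphi_1, \psi_1)$ as the $n=1$ part of the equivalence equations, and none of the convergence or obstruction-theoretic input needed for genuine formal deformations is required here.
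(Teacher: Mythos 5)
Your proposal is correct and follows essentially the same route as the paper: the infinitesimal of a deformation is a $2$-cocycle, truncated sums built from a cocycle give an infinitesimal deformation, and the pair $(\mathrm{id}_A + t\varphi_1, \mathrm{id}_M + t\psi_1)$ realizes cohomologous cocycles as equivalent deformations. Your reorganization (an unreduced bijection with $Z^2_\mathrm{rRBf}$ that descends to quotients, rather than mutually inverse maps between the quotient sets) is only a cosmetic difference.
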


\begin{proof}
Like formal one-parameter deformations, we have a map from the set of all infinitesimal deformations to the second cohomology group $H^2_\mathrm{rRBf} (A,M, \{ R_\alpha \}_{\alpha \in \Omega}).$ We will now construct a map in the other direction.

\medskip

Let $(\mu_1, \beta_1, R_1) \in Z^2_\mathrm{rRBf} (A,M, \{ R_\alpha \}_{\alpha \in \Omega})$ be a $2$-cocycle. Then it is easy to verify that the quadruple $(\mu_t, l_t, r_r, \{ (R_t)_\alpha \}_{\alpha \in \Omega})$ is an infinitesimal deformation, where
\begin{align*}
\mu_t = \mu + t \mu_1, \quad l_t = l +t l_1, \quad r_t = r + t r_1 ~~~~ \text{ and } ~~~~ \{ (R_t)_\alpha = R_\alpha + t R_{1, \alpha} \}_{\alpha \in \Omega}.
\end{align*} 
Here the maps $l_1$ and $r_1$ are obtained from $\beta_1$ simply by (\ref{beta}). If $(\mu_1, \beta_1, R_1)$ and $(\mu'_1, \beta'_1, R'_1)$ are two cohomologous $2$-cocycles \big(say, $(\mu_1, \beta_1, R_1) - (\mu_1, \beta_1, R_1) = \delta_\mathrm{rRBf} (\varphi_1, \psi_1)$\big) then the pair 
\begin{align*}
(\varphi_t = \mathrm{id}_A + t \varphi_1, \psi_t = \mathrm{id}_M + t \psi_1)
\end{align*}
 provides the equivalence between the corresponding infinitesimal deformations. Hence we obtain a map from $H^2_\mathrm{rRBf} (A, M, \{ R_\alpha \}_{\alpha \in \Omega})$ to the set of all equivalence classes of infinitesimal deformations. This map is the inverse of the previously constructed map in the other direction. Hence the result follows.
\end{proof}

\medskip

\section{Homotopy Rota-Baxter family algebras and Dend$_\infty$-family algebras}\label{sec6} In this section, we first introduce homotopy (relative) Rota-Baxter family algebras as the homotopy analogue of (relative) Rota-Baxter family algebras. Next, we also consider Dend$_\infty$-family algebras (homotopy analogue of dendriform family algebras) and find their relations with homotopy (relative) Rota-Baxter family algebras. Like relative Rota-Baxter family algebras have underlying associative algebras, their homotopy version has underlying $A_\infty$-algebras. Thus, we start this section by recalling $A_\infty$-algebras and representations over them \cite{stasheff,keller}.

\medskip

\begin{defn}\label{a-inf-defn}
    An {\bf $A_\infty$-algebra} is a pair $(A, \{ \mu_k \}_{k \geq 1})$ consisting of a graded vector space $A$ with a collection $\{ \mu_k : A^{\otimes k} \rightarrow A \}_{k \geq 1}$ of degree $1$ linear maps satisfying
    \begin{align}\label{homotopy-ass-iden}
\sum_{k+l = n+1} \sum_{i=1}^k (-1)^{|a_1| + \cdots + |a_{i-1}|} ~ \mu_k \big( a_1, \ldots, a_{i-1}, \mu_l (a_i, \ldots, a_{i+l-1}), a_{i+l}, \ldots, a_n   \big) = 0,
    \end{align}
    for any $n \geq 1$ and homogeneous elements $a_1, \ldots, a_n \in A.$
\end{defn}

Let $(A, \{ \mu_k \}_{k \geq 1})$ be an $A_\infty$-algebra. A {\bf representation} of this $A_\infty$-algebra is a graded vector space $M$ equipped with a collection $\{ \eta_k : \mathcal{A}^{k-1,1} \xrightarrow{} M \}_{k \geq 1}$ of degree $1$ linear maps satisfying the identities (\ref{homotopy-ass-iden}) with exactly one of $a_1, \ldots, a_n$ is from $M$ and the corresponding operation $\mu_k$ or $\mu_l$ replaced by $\eta_k$ or $\eta_l$.
Like the nonhomotopic case, here $\mathcal{A}^{k-1,1}$ denotes the direct sum of all possible tensor powers of $A$ and $M$ in which $A$ appears $k-1$ times and $M$ appears exactly once.

Representations of $A_\infty$-algebras generalize the concept of bimodules over associative algebras. Further, it follows from the above definition that any $A_\infty$-algebra is a representation of itself, called the adjoint representation.

\subsection{Homotopy (relative) Rota-Baxter family algebras} Here we introduce homotopy (relative) Rota-Baxter family algebras. To do this, we will mainly generalize the results of subsection \ref{subsection-a graded lie} in the homotopy context.

Let $(A, \{ \mu_k \}_{k \geq 1})$ be an $A_\infty$-algebra and $(M, \{ \eta_k \}_{k \geq 1})$ be a representation of it. 
For any $n \in \mathbb{Z}$ and $k \geq 1$, let $\mathrm{Hom}^n_\Omega ( (A \oplus M)^{\otimes k}, A \oplus M)$ be the set whose elements are given by a collection 
\begin{align}\label{mu-k}
\mu_k = \big\{  (\mu_k)_{\alpha_1, \ldots, \alpha_k} : (A \oplus M)^{\otimes k} \rightarrow A \oplus M  \big\}_{\alpha_1, \ldots, \alpha_k \in \Omega}
\end{align}
 of degree $n$ linear maps labelled by the elements of $\Omega^{\times k}$. We define
\begin{align*}
\mathrm{Hom}_\Omega^n \big(   \overline{T}(A \oplus M), A \oplus M \big) = \oplus_{k \geq 1} \mathrm{Hom}^n_\Omega ( (A \oplus M)^{\otimes k}, A \oplus M).
\end{align*}
Thus, an element $\mu \in \mathrm{Hom}^n_\Omega ( \overline{T} (A \oplus M), A \oplus M)$ is given by a sum $\mu = \sum_{k \geq 1} \mu_k$, where  $\mu_k$'s are of the form (\ref{mu-k}).
Observe that the graded vector space $\oplus_{n \in \mathbb{Z}} \mathrm{Hom}^n_\Omega (\overline{T}(A \oplus M), A \oplus M)$ inherits a graded Lie bracket
\begin{align*}
    \{ \! \! [ \sum_{k \geq 1} \mu_k , \sum_{l \geq 1} \eta_l  ] \! \! \}_\Omega = \sum_{p \geq 1} \sum_{k+l = p+1} \big( \mu_k \circ \eta_l - (-1)^{mn} \eta_l \circ \mu_k  \big),
\end{align*}
where
\begin{align*}
&(\mu_k \circ \eta_l)_{\alpha_1, \ldots, \alpha_{p}} \big(  (a_1, u_1), \ldots, (a_{p}, u_{p})  \big)  \\
&=  \sum_{i=1}^k (-1)^{|a_1| + \cdots + |a_{i-1}|} 
 (\mu_k)_{\alpha_1, \ldots, \alpha_i \cdots \alpha_{i+l-1}, \ldots, \alpha_{k+l-1}} \big(  (a_1, u_1), \ldots, (a_{i-1}, u_{i-1}), \\ & \qquad \qquad \qquad \qquad \qquad \qquad \qquad  (\eta_l)_{\alpha_i, \ldots, \alpha_{i+l-1}} \big(   (a_i, u_i), \ldots, (a_{i+l-1}, u_{i+l-1})  \big), \ldots,  (a_{p}, u_{p})      \big),
 \end{align*}
for $\sum_{k \geq 1} \mu_k \in \mathrm{Hom}^m_\Omega ( \overline{T} (A \oplus M), A \oplus M)$ and $\sum_{l \geq 1} \eta_l \in \mathrm{Hom}^n_\Omega ( \overline{T} (A \oplus M), A \oplus M)$.
It is further easy to see that the graded vector space $\mathfrak{a} = \oplus_{n \in \mathbb{Z}} \mathrm{Hom}_\Omega^n (\overline{T}(M), A)$ is an abelian Lie subalgebra of $\mathfrak{g} = \big(  \oplus_{n \in \mathbb{Z}} \mathrm{Hom}^n_\Omega (\overline{T}(A \oplus M), A \oplus M), \{ \! \! [ ~, ~ ] \! \! \}_\Omega  \big)$. Let $P: \mathfrak{g} \rightarrow \mathfrak{g}$ be the projection onto the subspace $\mathfrak{a}$. We define an element $\triangle \in   \mathrm{Hom}^1_\Omega     \big(  \overline{T}(A \oplus M), A \oplus M  \big)$ given by
\begin{align*}
\triangle := \widetilde{\sum_{k \geq 1} (\mu_k + \eta_k )}.
\end{align*}
Then the element $\triangle \in \mathrm{ker}(P)_1$. Since $(A, \{ \mu_k \}_{k \geq 1})$ is an $A_\infty$-algebra and $(M, \{ \eta_k \}_{k \geq 1})$ is a representation, it follows that $\{ \! \! [ \triangle , \triangle ] \! \! \}_\Omega = 0$. Hence we obtain a $V$-data $(\mathfrak{g}, \mathfrak{a}, P, \triangle)$. Thus, by Theorem \ref{cons-thm} (a), the graded vector space $\mathfrak{a} =  \oplus_{n \in \mathbb{Z}} \mathrm{Hom}_\Omega^n (\overline{T}(M), A)$ inherits an $L_\infty$-algebra with structure maps $\{ l_k \}_{k \geq 1}$. (Note that, this is in general a genuine $L_\infty$-algebra which generalizes the graded Lie algebra of Theorem \ref{mc-gla-thm} in the homotopy context.) 

\begin{defn}\label{defn-aaa}
A {\bf homotopy relative Rota-Baxter family algebra} is a triple
\begin{align}\label{hrrbfa}
\big(  (A, \{ \mu_k \}_{k \geq 1}), (M, \{ \eta_k \}_{k \geq 1}), R = \sum_{k \geq 1} R_k \big)
\end{align}
in which $(A, \{ \mu_k \}_{k \geq 1})$ is an $A_\infty$-algebra, $(M, \{ \eta_k \}_{k \geq 1})$ is a representation and $R = \sum_{k \geq 1} R_k$ is a Maurer-Cartan element of the $L_\infty$-algebra $\big(  \oplus_{n \in \mathbb{Z}} \mathrm{Hom}_\Omega^n (\overline{T}(M), A), \{ l_k \}_{k \geq 1} \big)$.
\end{defn}

The equivalent (and explicit) description of a homotopy relative Rota-Baxter family algebra is given by the following result.

\begin{thm}\label{thm-aaa}
A homotopy relative Rota-Baxter family algebra is a triple (\ref{hrrbfa}) in which $(A, \{ \mu_k \}_{k \geq 1})$ is an $A_\infty$-algebra, $(M, \{ \eta_k \}_{k \geq 1})$ is a representation and $R = \sum_{k \geq 1} R_k \in \mathrm{Hom}^0_\Omega (\overline{T}(M), A)$ satisfying the following set of identities
\begin{align}\label{homotopy-rota-iden}
&\sum_{t_1 + \cdots + t_k = n} \frac{1}{k!} ~ \mu_k \big( (R_{t_1})_{\alpha_1, \ldots, \alpha_{t_1}} (u_1, \ldots, u_{t_1}), \ldots, (R_{t_k})_{ \alpha_{t_1 + \cdots + t_{k-1} + 1} , \ldots, \alpha_n} (u_{t_1 + \cdots + t_{k-1} + 1} , \ldots, u_n)   \big)  \\
&= \sum_{i=1}^k \sum_{\substack{j+t_1 + \cdots t_{i-1} + 1\\ + t_{i+1} + \cdots t_k + t = n}} \frac{(-1)^{|u_1| + \cdots + |u_j|}}{(k-1 )!} ~ (R_{j+1+t})_{\alpha_1, \ldots, \alpha_j, \alpha_{j+1} \cdots \alpha_{n-t}, \alpha_{n-t+1}, \ldots, \alpha_n} \bigg( u_1, \ldots, u_j, \nonumber \\
&\eta_k \big( (R_{t_1})_{\alpha_{j+1}, \ldots, \alpha_{j+ t_1}} (u_{j+1}, \ldots, u_{j+ t_1}), \ldots, u_{j+ t_1 + \cdots + t_{i-1} + 1}, \ldots, (R_{t_k})_{\alpha_{n - t - t_k +1}, \ldots, \alpha_{n-t}} (u_{n - t - t_k +1}, \ldots, u_{n-t}) \big), \nonumber \\ & u_{n-t+1}, \ldots, u_n \bigg), \text{ for all } n \geq 1, ~\alpha_1, \ldots, \alpha_n \in \Omega \text{ and } u_1, \ldots, u_n \in M. \nonumber
\end{align}
\end{thm}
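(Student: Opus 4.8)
The plan is to unwind Definition~\ref{defn-aaa} by expanding the Maurer--Cartan equation of $R = \sum_{k \geq 1} R_k$ in the $L_\infty$-algebra $(\mathfrak{a}, \{ l_k \}_{k \geq 1})$ and matching it, arity by arity, with the system (\ref{homotopy-rota-iden}). Since this $L_\infty$-structure comes from the $V$-data $(\mathfrak{g}, \mathfrak{a}, P, \triangle)$ via Theorem~\ref{cons-thm}(a), its structure maps are the derived brackets of (\ref{a-bracket}); evaluating all arguments at $R$ gives
\begin{align*}
l_k(R, \ldots, R) = P \, \underbrace{\{ \! \! [ \cdots \{ \! \! [ \{ \! \! [ \triangle, R ] \! \! \}_\Omega, R ] \! \! \}_\Omega, \ldots, R ] \! \! \}_\Omega}_{k \text{ copies of } R},
\end{align*}
so that $R$ is a Maurer--Cartan element precisely when $\sum_{k \geq 1} \frac{1}{k!} \, l_k(R, \ldots, R) = 0$. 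This is an identity in $\mathfrak{a} = \oplus_{n} \mathrm{Hom}^n_\Omega(\overline{T}(M), A)$, so it may be checked after evaluating on inputs $u_1, \ldots, u_n \in M$ with labels $\alpha_1, \ldots, \alpha_n \in \Omega$, one output arity $n$ at a time -- which is exactly the form of (\ref{homotopy-rota-iden}).

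I would then expand the iterated brackets, using the splitting $\triangle = \widetilde{\sum_{k \geq 1}(\mu_k + \eta_k)}$ of $\triangle$ into its $A$-valued part (the $\mu_k$) and its $M$-valued part (the $\eta_k$). The decisive bookkeeping is that $R$ sends $\overline{T}(M)$ into $A$, while $P$ retains only maps with $M$-inputs and $A$-output. Hence in each step $\{ \! \! [\, \cdot\, , R ] \! \! \}_\Omega = (\cdot)\circ R \pm R\circ(\cdot)$, a copy of $R$ may only be fed (through its $A$-valued output) into an $A$-slot of some $\mu_k$ or $\eta_k$, or else the current intermediate map may be plugged into a slot of an outer $R$ -- and the latter requires that intermediate to be $M$-valued, i.e. produced by an $\eta_k$. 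The abelianness of $\mathfrak{a}$ annihilates every term in which two copies of $R$ would be composed without an intervening $\triangle$. Therefore, after applying $P$, the surviving terms fall into exactly two families: those whose outermost operation is a $\mu_k$ with all slots filled by outputs $R_{t_1}(\cdots), \ldots, R_{t_k}(\cdots)$, which reproduce the left-hand side of (\ref{homotopy-rota-iden}); and those whose outermost operation is a single $R_{j+1+t}$ enclosing one $\eta_k$ fed by the remaining $R$-outputs together with one bare $M$-input, which reproduce the right-hand side. The opposite relative sign of the $(\cdot)\circ R$ and $R\circ(\cdot)$ summands in the derived bracket is exactly what carries the $\eta$-family across the equality, turning $\sum_k \frac{1}{k!} \, l_k(R, \ldots, R) = 0$ into the stated ``left $=$ right''.

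It remains to pin down the constants and signs. On the $\mu_k$-family all $k$ slots carry $R$-outputs, and their graded symmetrization -- together with the Maurer--Cartan prefactor and the positional sums already built into $\circ$ -- collapses to the factor $\frac{1}{k!}$; on the $\eta_k$-family one slot carries the distinguished bare input (summed over which slot, giving $\sum_{i=1}^k$) while the remaining $k-1$ slots carry $R$-outputs, whose symmetrization yields $\frac{1}{(k-1)!}$. The Koszul signs $(-1)^{|u_1| + \cdots + |u_j|}$ record commuting $R$ past the $j$ inputs lying to its left, exactly as in the sign rule of the partial composition $\circ_i$. Finally, the index constraints $t_1 + \cdots + t_k = n$ on the left and $j + t_1 + \cdots + t_{i-1} + 1 + t_{i+1} + \cdots + t_k + t = n$ on the right merely record how the $n$ inputs from $M$ are distributed among the various $R_{t_i}$, the bare slot, and the $j + t$ leftover inputs of the outer $R_{j+1+t}$.

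The main obstacle is this sign-and-multiplicity bookkeeping rather than any conceptual point: one must verify that the Koszul signs generated by the iterated $\{ \! \! [ ~, ~ ] \! \! \}_\Omega$ agree with those displayed in (\ref{homotopy-rota-iden}), that each family is counted with precisely the factorial written and no surplus terms survive the projection $P$, and that the bidegree splitting used earlier in this section forces the clean type-separation invoked above. This is the homotopy-level analogue of the computation in Theorem~\ref{mc-gla-thm}, where $\llbracket R, R \rrbracket_{\alpha, \beta}(u,v)$ was shown to equal twice the relative Rota--Baxter family expression; here the single operation $\triangle$ is replaced by the whole tower $\sum_k(\mu_k + \eta_k)$ and the single operator $R$ by $\sum_k R_k$, so the same derived-bracket mechanism, now iterated, delivers (\ref{homotopy-rota-iden}).
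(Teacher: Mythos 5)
Your proposal is correct and takes essentially the same route as the paper: both unwind the Maurer--Cartan equation as $\sum_{k\geq 1}\frac{1}{k!}\,P[\cdots[[\triangle,R],R],\ldots,R]=0$, observe that abelianness of $\mathfrak{a}$ together with the $A$-valued output of $R$ and the projection $P$ kills every term of the binomial expansion except the one with $\triangle$ outermost and the one with a single $R$ outermost, and identify these two surviving families (with coefficients $\frac{1}{k!}$ and $k\cdot\frac{1}{k!}=\frac{1}{(k-1)!}$) with the two sides of (\ref{homotopy-rota-iden}). The only cosmetic difference is that you justify the vanishing of the $i\geq 2$ terms and the factor $\frac{1}{(k-1)!}$ by slot-counting language, where the paper simply cites the binomial expansion and asserts ``the other terms are zero.''
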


\begin{proof}
Note that the element $R$ is a Maurer-Cartan element if and only if
\begin{align*}
\sum_{k=1}^\infty \frac{1}{k!}~ P \underbrace{[ \cdots [  [}_{k} \triangle, R ], R ], \ldots, R ] ~= 0.
\end{align*}
For any $k \geq 1$, we have
\begin{align}\label{something-eqn}
&\big(  P \underbrace{[ \cdots [  [}_{k} \triangle, R ], R ], \ldots, R ] \big)  \nonumber \\
&= \mathrm{pr}_A \sum_{i=0}^k (-1)^i ~ {k \choose i} \big( \underbrace{R \circ \cdots \circ R}_{i} \circ \triangle \underbrace{\circ R \circ \cdots \circ R}_{k-i}  \big) \nonumber \\
&=\big(  ( \mathrm{pr}_A \circ \triangle \circ \underbrace{R \circ \cdots \circ R }_{k} ) - k ( \mathrm{pr}_A \circ R \circ  \triangle \circ \underbrace{R \circ \cdots \circ R }_{k-1}) \big) ~ (\text{the other terms are zero}).
\end{align}
By careful calculation, we get
\begin{align*}
&( \mathrm{pr}_A \circ \triangle \circ \underbrace{R \circ \cdots \circ R }_{k} ) (u_1, \ldots, u_n) \\
&= \sum_{t_1 + \cdots + t_k = n}  \mu_k \big( (R_{t_1})_{\alpha_1, \ldots, \alpha_{t_1}} (u_1, \ldots, u_{t_1}), \ldots, (R_{t_k})_{ \alpha_{t_1 + \cdots + t_{k-1} + 1} , \ldots, \alpha_n} (u_{t_1 + \cdots + t_{k-1} + 1} , \ldots, u_n)   \big).
\end{align*}
Similarly,
\begin{align*}
&k ( \mathrm{pr}_A \circ R \circ  \triangle \circ \underbrace{R \circ \cdots \circ R }_{k-1}) (u_1, \ldots, u_n) \\
& = k \sum_{i=1}^k \sum_{\substack{j+t_1 + \cdots t_{i-1} + 1\\ + t_{i+1} + \cdots t_k + t = n}} (-1)^{|u_1| + \cdots + |u_j|} ~ (R_{j+1+t})_{\alpha_1, \ldots, \alpha_j, \alpha_{j+1} \cdots \alpha_{n-t}, \alpha_{n-t+1}, \ldots, \alpha_n} \bigg( u_1, \ldots, u_j, \nonumber \\
&\eta_k \big( (R_{t_1})_{\alpha_{j+1}, \ldots, \alpha_{j+ t_1}} (u_{j+1}, \ldots, u_{j+ t_1}), \ldots, u_{j+ t_1 + \cdots + t_{i-1} + 1}, \ldots, (R_{t_k})_{\alpha_{n - t - t_k +1}, \ldots, \alpha_{n-t}} (u_{n - t - t_k +1}, \ldots, u_{n-t}) \big), \nonumber \\ & u_{n-t+1}, \ldots, u_n \bigg).
\end{align*}
Therefore, it follows from (\ref{something-eqn}) that the identities (\ref{homotopy-rota-iden}) hold. This completes the proof.
\end{proof}

A homotopy relative Rota-Baxter family algebra $\big(  (A, \{ \mu_k \}_{k \geq 1}), (M, \{ \eta_k \}_{k \geq 1}), R = \sum_{k \geq 1} R_k \big)$ is said to be {\bf strict} if $R_k = 0$ for $k \neq 1$. Therefore, $R$ is given by a collection $R = \{ R_\alpha : M \rightarrow A \}_{\alpha \in \Omega}$ of degree $0$ linear maps. Moreover, it follows from (\ref{homotopy-rota-iden}) that the collection $R = \{ R_\alpha \}_{\alpha \in \Omega}$ satisfies
\begin{align}\label{strict-iden}
\mu_k \big( R_{\alpha_1} (u_1), \ldots, R_{\alpha_k} (u_k)  \big) = \sum_{r=1}^k  R_{\alpha_1 \cdots \alpha_k} \big(  \eta_k (   R_{\alpha_1} (u_1), \ldots, u_r, \ldots, R_{\alpha_k} (u_k)) \big),
\end{align}
for all $k \geq 1$, $\alpha_1, \ldots, \alpha_k \in \Omega$ and $u_1, \ldots, u_k \in M$.

\medskip

\begin{remark}\label{hrbfal} In Definition \ref{defn-aaa} or in Theorem \ref{thm-aaa}, if we consider the adjoint representation of an $A_\infty$-algebra, we obtain homotopy Rota-Baxter family algebra. 
\end{remark}


It follows from Theorem \ref{thm-aaa} and Remark \ref{hrbfal} that any (relative) Rota-Baxter family algebra can be seen as a homotopy (relative) Rota-Baxter family algebra concentrated in degree $-1$.

\medskip

\subsection{Dend$_\infty$-family algebras} In this subsection, we introduce and study Dend$_\infty$-family algebras as the homotopy version of dendriform family algebras.

Let $D = \oplus_{i \in \mathbb{Z}} D_i$ be a graded vector space. For each $n \geq 1$, let $C_n$ be the set of first $n$ natural numbers. We write $C_n = \{ [1], \ldots, [n] \}$ for our convenience. For any $n \in \mathbb{Z}$ and $k geq 1$, let
 $\mathrm{Hom}_\Omega^n ({\bf k} [C_k] \otimes D^{\otimes k}, D)$ be the set whose elements are given by a collection $\theta_k = \{ (\theta_k)_{\alpha_1, \ldots, \alpha_k} : {\bf k} [C_k] \otimes D^{\otimes k} \rightarrow D \}_{\alpha_1, \ldots, \alpha_k \in \Omega}$ of graded linear maps labelled by the elements of $\Omega^{\times k}$. For any $[r] \in C_k$, we denote the degree $n$ linear map $(\theta_k)_{\alpha_1, \ldots, \alpha_k} ([r]; ~, \ldots, ~) : D^{\otimes k} \rightarrow D$ simply by $(\theta_k)^{[r]}_{\alpha_1, \ldots, \alpha_k}$. 

Let $\mathrm{Hom}_{\widehat{\Omega}}^n ({\bf k} [C_k] \otimes D^{\otimes k}, D) \subset \mathrm{Hom}_\Omega^n ({\bf k} [C_k] \otimes D^{\otimes k}, D)$ be the set consisting of all elements 
\begin{align*}
\theta_k = \{ (\theta_k)_{\alpha_1, \ldots, \alpha_k} : {\bf k} [C_k] \otimes D^{\otimes k} \rightarrow D \}_{\alpha_1, \ldots, \alpha_k \in \Omega} \in \mathrm{Hom}_\Omega^n ({\bf k} [C_k] \otimes D^{\otimes k}, D)
\end{align*}
for which the degree $n$ linear maps $(\theta_k)^{[r]}_{\alpha_1, \ldots, \alpha_k} : D^{\otimes k} \rightarrow D$ doesn't depend on $\alpha_r$, for all $1 \leq r \leq k$.

\begin{defn}
A {\bf Dend$_\infty$-family algebra} is a pair $(D, \{ \theta_k \}_{k \geq 1} )$ consisting of a graded vector space $D = \oplus_{i \in \mathbb{Z}} D_i$ with a collection $\{ \theta_k \}_{k \geq 1}$ of elements $\theta_k \in \mathrm{Hom}^1_{\widehat{\Omega}}({\bf k} [C_k] \otimes D^{\otimes k}, D)$, for $k \geq 1$, satisfying the following set of identities: for any $ n \geq 1,$ $[r] \in C_n$, $ \alpha_1, \ldots, \alpha_n \in \Omega$ and homogeneous elements $x_1, \ldots, x_n \in D$, 
\begin{align}\label{hom-dend-fam-iden}
 \sum_{k+l = n+1} \sum_{i = 1}^k ~(\theta_k \diamond_i \theta_l)^{[r]}_{\alpha_1, \ldots, \alpha_n} (x_1, \ldots, x_n) = 0,
\end{align}
where\\

\noindent $(\theta_k \diamond_i \theta_l)^{[r]}_{\alpha_1, \ldots, \alpha_n} (x_1, \ldots, x_n) = $

\begin{align*}
\begin{cases}
(-1)^{|x_1| + \cdots + |x_{i-1}|} ~ (\theta_k)^{[r]}_{\alpha_1, \ldots, \alpha_i \cdots \alpha_{i+l-1}, \ldots, \alpha_n} \big( x_1, \ldots, x_{i-1}, \sum_{s=1}^l (\theta_l)^{[s]}_{\alpha_i, \ldots, \alpha_{i+l-1}} (x_i, \ldots, x_{i+l-1}), x_{i+l}, \ldots, x_{n}    \big) \\
\hfill \text{ if }  1 \leq r \leq i-1, \\\\
(-1)^{|x_1| + \cdots + |x_{i-1}|} ~ (\theta_k)^{[i]}_{\alpha_1, \ldots, \alpha_i \cdots \alpha_{i+l-1}, \ldots, \alpha_n} \big( x_1, \ldots, x_{i-1},  (\theta_l)^{[r-i+1]}_{\alpha_i, \ldots, \alpha_{i+l-1}} (x_i, \ldots, x_{i+l-1}), x_{i+l}, \ldots, x_{n}    \big) \\
\hfill \text{ if } i \leq r \leq i+l-1, \\\\
(-1)^{|x_1| + \cdots + |x_{i-1}|} ~ (\theta_k)^{[r-l+1]}_{\alpha_1, \ldots, \alpha_i \cdots \alpha_{i+l-1}, \ldots, \alpha_n} \big( x_1, \ldots, x_{i-1}, \sum_{s=1}^l (\theta_l)^{[s]}_{\alpha_i, \ldots, \alpha_{i+l-1}} (x_i, \ldots, x_{i+l-1}), x_{i+l}, \ldots, x_{n}    \big) \\
\hfill \text{ if } i+l \leq r \leq n. 
\end{cases}
\end{align*}
\end{defn}

\medskip

\begin{remark}
(i) When $\Omega$ is singleton, a Dend$_\infty$-family algebra is nothing but a Dend$_\infty$-algebra (homotopy dendriform algebra) \cite{lod-val-book}. Thus, Dend$_\infty$-family algebras are a generalization of Dend$_\infty$-algebras.

\medskip

(ii) Note that any dendriform family algebra can be regarded as a Dend$_\infty$-family algebra concentrated in degree $-1$. More precisely, if $(D, \{ \prec_\alpha, \succ_\alpha \}_{\alpha \in \Omega})$ is a dendriform family algebra then $(s^{-1} D, \{ \theta_k \}_{k \geq 1})$ is a Dend$_\infty$-family algebra, where
\begin{align*}
\begin{cases}
(\theta_2)^{[1]}_{\alpha, \beta} (s^{-1}x, s^{-1}y) = s^{-1} (x \prec_\beta y) \\
(\theta_2)^{[2]}_{\alpha, \beta} (s^{-1}x, s^{-1} y) = s^{-1} (x \succ_\alpha y)
\end{cases}   ~~~~ \text{ and } \quad  \theta_k = 0 \text{ for } k \neq 2.
\end{align*}
\end{remark}

It is known that Dend$_\infty$-algebras are the splitting object for $A_\infty$-algebras \cite{lod-val-book}. Here we generalize this result by introducing a notion of $\Omega$-$A_\infty$-algebra.

\begin{defn}
An {\bf $\Omega$-$A_\infty$-algebra} (also called an $A_\infty$-algebra relative to $\Omega$) is a pair $(A, \{ \nu_k \}_{k \geq 1})$ consisting of a graded vector space $A$ together with a collection $\{ \nu_k \}_{k \geq 1}$ of elements $\nu_k \in \mathrm{Hom}^1_\Omega (A^{\otimes k}, A)$, for $k \geq 1$, satisfying
\begin{align*}
\sum_{k+l = n+1} \sum_{i=1}^k (-1)^{|a_1| + \cdots + |a_{i-1}|} ~ (\mu_k)_{\alpha_1, \ldots, \alpha_i \cdots \alpha_{i+l-1}, \ldots, \alpha_{n}} \big( a_1, \ldots, (\mu_l)_{\alpha_i, \ldots, \alpha_{i+l-1}} (a_i, \ldots, a_{i+l-1}), \ldots, a_n   \big) = 0,
\end{align*}
for all $n \geq 1$, $\alpha_1, \ldots, \alpha_n \in \Omega$ and homogeneous elements $a_1, \ldots, a_n \in A$. 
\end{defn}

When $\Omega$ is singleton, an $\Omega$-$A_\infty$-algebra is nothing but an ordinary $A_\infty$-algebra (see Definition \ref{a-inf-defn}). In general, we have the following result.

\begin{prop}\label{omega-to-ordinary}
Let $(A, \{ \nu_k \}_{k \geq 1})$ be an $\Omega$-$A_\infty$-algebra. Then $(A \otimes {\bf k} \Omega, \{ \mu_k \}_{k \geq 1})$ is an ordinary $A_\infty$-algebra, where
\begin{align*}
\mu_k (a_1 \otimes \alpha_1, \ldots, a_k \otimes \alpha_k) = \big( (\nu_k)_{\alpha_1, \ldots, \alpha_k} (a_1, \ldots, a_k)   \big) \otimes \alpha_1 \cdots \alpha_k,
\end{align*}
for $a_1 \otimes \alpha_1, \ldots, a_k \otimes \alpha_k \in A \otimes {\bf k} \Omega$.
\end{prop}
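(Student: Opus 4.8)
The plan is to verify the homotopy associativity identity (\ref{homotopy-ass-iden}) for the collection $\{ \mu_k \}_{k \geq 1}$ on the graded vector space $A \otimes {\bf k} \Omega$ directly, reducing it term by term to the defining identity of the $\Omega$-$A_\infty$-algebra $(A, \{ \nu_k \}_{k \geq 1})$. Since everything is ${\bf k}$-multilinear, it suffices to check the identity on decomposable elements $a_j \otimes \alpha_j$.

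First I would record two elementary bookkeeping facts. Because ${\bf k}\Omega$ is concentrated in degree $0$, the homogeneous degree of $a \otimes \alpha \in A \otimes {\bf k}\Omega$ equals $|a|$; consequently every Koszul sign computed in $A \otimes {\bf k}\Omega$ agrees with the corresponding sign computed in $A$, and each $\mu_k$ is a degree $1$ map because every $\nu_k$ is. Second, expanding an inner application on decomposables, $\mu_l(a_i \otimes \alpha_i, \ldots, a_{i+l-1} \otimes \alpha_{i+l-1})$ produces the element $(\nu_l)_{\alpha_i, \ldots, \alpha_{i+l-1}}(a_i, \ldots, a_{i+l-1}) \otimes (\alpha_i \cdots \alpha_{i+l-1})$, so the semigroup label fed into the $i$-th slot of the outer $\nu_k$ is precisely the product $\alpha_i \cdots \alpha_{i+l-1}$.

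Next I would assemble the full composite. Applying the outer $\mu_k$ and invoking associativity of the semigroup product in $\Omega$, the output tensor label collapses to the total product $\alpha_1 \cdots \alpha_n$, independent of the position $i$ and the split $k+l = n+1$. Thus each summand of the $A_\infty$-identity for $\{\mu_k\}$ equals
\begin{align*}
(-1)^{|a_1| + \cdots + |a_{i-1}|}\, (\nu_k)_{\alpha_1, \ldots, \alpha_i \cdots \alpha_{i+l-1}, \ldots, \alpha_n} \big( a_1, \ldots, (\nu_l)_{\alpha_i, \ldots, \alpha_{i+l-1}}(a_i, \ldots, a_{i+l-1}), \ldots, a_n \big) \otimes \alpha_1 \cdots \alpha_n.
\end{align*}
Since the factor $\otimes\, \alpha_1 \cdots \alpha_n$ is common to all terms, it factors out of the full sum $\sum_{k+l=n+1} \sum_{i=1}^{k}$, which then becomes exactly the left-hand side of the defining $\Omega$-$A_\infty$-identity tensored with $\alpha_1 \cdots \alpha_n$. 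As $(A, \{\nu_k\})$ is an $\Omega$-$A_\infty$-algebra, this expression vanishes, yielding (\ref{homotopy-ass-iden}) for $\{\mu_k\}$ and completing the proof.

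The computation is essentially bookkeeping, and the only point requiring genuine care — hence the main obstacle — is confirming that the semigroup labels propagate correctly: one must check that under the composite $\mu_k \circ_i \mu_l$ the argument label of the $i$-th slot of $\nu_k$ is exactly $\alpha_i \cdots \alpha_{i+l-1}$ and that the overall output label is the associative product $\alpha_1 \cdots \alpha_n$, so that the index pattern $\{\alpha_1, \ldots, \alpha_i \cdots \alpha_{i+l-1}, \ldots, \alpha_n\}$ matches term-for-term the one appearing in the $\Omega$-$A_\infty$-identity. Once this label matching is established and the signs are reconciled via the degree identity $|a_j \otimes \alpha_j| = |a_j|$, the conclusion is immediate.
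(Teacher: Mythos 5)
Your proof is correct and follows essentially the same route as the paper's: a direct term-by-term verification of the $A_\infty$-identity on decomposables, matching signs via $|a_j \otimes \alpha_j| = |a_j|$ and semigroup labels via associativity of $\Omega$, then invoking the $\Omega$-$A_\infty$-identity. If anything, your write-up is slightly more careful than the paper's, which silently drops the common factor $\otimes\, \alpha_1 \cdots \alpha_n$ in its displayed computation.
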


\begin{proof}
For any $n \geq 1$ and homogeneous elements $a_1 \otimes \alpha_1, \ldots, a_n \otimes \alpha_n \in A \otimes {\bf k} \Omega$, we have
\begin{align*}
&\sum_{k+l = n+1} \sum_{i=1}^k (-1)^{|a_1 \otimes \alpha_1| +  \cdots + |a_{i-1} \otimes \alpha_{i-1}|} \\
& \qquad \qquad \qquad  \mu_k \big(  a_1 \otimes \alpha_1, \ldots, a_{i-1} \otimes \alpha_{i-1}, \mu_l \big(  a_i \otimes \alpha_i, \ldots, a_{i+l-1} \otimes \alpha_{i+l-1}  \big), a_{i+l} \otimes \alpha_{i+l} , \ldots, a_n \otimes \alpha_n  \big) \\
&= \sum_{k+l = n+1} \sum_{i=1}^k (-1)^{|a_1| + \cdots + |a_{i-1}|} ~ (\mu_k)_{\alpha_1, \ldots, \alpha_i \cdots \alpha_{i+l-1}, \ldots, \alpha_{n}} \big( a_1, \ldots, (\mu_l)_{\alpha_i, \ldots, \alpha_{i+l-1}} (a_i, \ldots, a_{i+l-1}), \ldots, a_n   \big) \\
&= 0.
\end{align*}
This completes the proof.
\end{proof}

\begin{prop}
Let $(D, \{ \theta_k \}_{k \geq 1})$ be a Dend$_\infty$-family algebra. Then $(D, \{ \nu_k \}_{k \geq 1})$ is an $\Omega$-$A_\infty$-algebra, where
\begin{align*}
(\nu_k)_{\alpha_1, \ldots, \alpha_k} (x_1, \ldots, x_k) = \sum_{r=1}^k (\theta_k)^{[r]}_{\alpha_1, \ldots, \alpha_k} (x_1, \ldots, x_k),
\end{align*}
for $\alpha_1, \ldots, \alpha_k \in \Omega$ and $x_1, \ldots, x_k \in \Omega$.
\end{prop}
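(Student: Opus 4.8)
The plan is to deduce the $\Omega$-$A_\infty$ relations for the operations $\{\nu_k\}$ directly from the defining identities (\ref{hom-dend-fam-iden}) of the Dend$_\infty$-family algebra, by summing the latter over the label $[r]\in C_n$. First I would fix $n\geq 1$, elements $\alpha_1,\ldots,\alpha_n\in\Omega$ and homogeneous $x_1,\ldots,x_n\in D$, and observe that since (\ref{hom-dend-fam-iden}) holds for every single $[r]\in C_n$, adding those $n$ identities gives
\[
\sum_{r=1}^n \sum_{k+l=n+1}\sum_{i=1}^k (\theta_k \diamond_i \theta_l)^{[r]}_{\alpha_1,\ldots,\alpha_n}(x_1,\ldots,x_n) = 0.
\]
The entire task is then to show that, after interchanging the order of summation, the left-hand side is precisely the $\Omega$-$A_\infty$ expression associated with $\{\nu_k\}$.

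To carry this out I would fix $k,l$ with $k+l=n+1$ and an insertion slot $i$ with $1\leq i\leq k$, and sum $(\theta_k \diamond_i \theta_l)^{[r]}$ over all $r=1,\ldots,n$ using the three-case formula. The essential bookkeeping is to track the outer superscript on $\theta_k$: in the range $1\leq r\leq i-1$ it equals $[r]$ (sweeping the values $1,\ldots,i-1$), in the range $i\leq r\leq i+l-1$ it is frozen at $[i]$, and in the range $i+l\leq r\leq n$ it equals $[r-l+1]$ (sweeping $i+1,\ldots,n-l+1$). Using $k=n+1-l$, the last range of superscripts is exactly $i+1,\ldots,k$, so as $r$ runs over all of $C_n$ the outer superscript takes each value in $\{1,\ldots,k\}$ exactly once. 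Simultaneously, in the first and third ranges the inner slot already carries the full sum $\sum_{s=1}^l (\theta_l)^{[s]}=\nu_l$, while in the frozen middle range the inner superscript $[r-i+1]$ sweeps over $1,\ldots,l$, so summing over that range reconstructs $\nu_l$ in the slot attached to the outer superscript $[i]$. Since all three cases carry the identical Koszul sign $(-1)^{|x_1|+\cdots+|x_{i-1}|}$, the sum over $r$ collapses to
\[
(-1)^{|x_1|+\cdots+|x_{i-1}|}\,(\nu_k)_{\alpha_1,\ldots,\alpha_i\cdots\alpha_{i+l-1},\ldots,\alpha_n}\big(x_1,\ldots,(\nu_l)_{\alpha_i,\ldots,\alpha_{i+l-1}}(x_i,\ldots,x_{i+l-1}),\ldots,x_n\big),
\]
which is exactly the $(k,l,i)$-summand of the $\Omega$-$A_\infty$ relation.

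Summing this over $i$ and over $k+l=n+1$ and comparing with the displayed vanishing sum then yields the $\Omega$-$A_\infty$ relation for $\{\nu_k\}$; the requirement $\nu_k\in\mathrm{Hom}^1_\Omega(D^{\otimes k},D)$ is immediate, since $\nu_k$ is a finite sum of the degree $1$ maps $(\theta_k)^{[r]}$. I expect the main obstacle to be the careful index accounting of the preceding paragraph, in particular verifying that the three disjoint $r$-ranges reassemble into a single clean sum $\sum_{r'=1}^k (\theta_k)^{[r']}$ with a fully reconstructed $\nu_l$ inserted, and confirming that the $\Omega$-indices $\alpha_1,\ldots,\alpha_i\cdots\alpha_{i+l-1},\ldots,\alpha_n$ on the outer map agree across all three cases (which they do, since the merged index $\alpha_i\cdots\alpha_{i+l-1}$ is the same in each). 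Everything else is routine rewriting.
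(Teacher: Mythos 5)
Your proposal is correct and follows essentially the same route as the paper's proof: sum the Dend$_\infty$-family identities over $[r]\in C_n$, split $\sum_{r=1}^{n}$ into the ranges $1\le r\le i-1$, $i\le r\le i+l-1$, $i+l\le r\le n$ for each fixed $(k,l,i)$, and observe that the outer superscripts reassemble into $\sum_{r'=1}^{k}(\theta_k)^{[r']}=\nu_k$ with $\nu_l$ reconstructed in the inserted slot. Your index bookkeeping (including the frozen outer label $[i]$ in the middle range and the shift $[r-l+1]$ in the last range) matches the paper's computation exactly.
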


\begin{proof}
Since $(D, \{ \theta_k \}_{k \geq 1})$ is a Dend$_\infty$-family algebra, we have the identities (\ref{hom-dend-fam-iden}). Adding all these identities for $r=1, 2, \ldots, n$, we get
\begin{align}\label{r1n}
\sum_{r=1}^n ~ \sum_{k+l= n+1} ~ \sum_{i=1}^k ~(\theta_k \diamond_i \theta_l)^{[r]}_{\alpha_1, \ldots, \alpha_n} (x_1, \ldots, x_n) = 0.
\end{align}
Given any fixed $k, l$ (with $k+l=n+1$) and $1 \leq i \leq k$, we have $\sum_{r=1}^n = \sum_{r=1}^{i-1} + \sum_{r=i}^{i+l-1} + \sum_{r=i+l}^n$. For this decomposition,
\begin{align}
&\sum_{r=1}^{i-1} (\theta_k  \diamond_i \theta_l)^{[r]}_{\alpha_1, \ldots, \alpha_n} (x_1, \ldots, x_n) \nonumber \\
&= \sum_{r=1}^{i-1} (-1)^{|x_1| + \cdots + |x_{i-1}|}~ (\theta_k)^{[r]}_{\alpha_1, \ldots, \alpha_i \cdots \alpha_{i+l-1}, \ldots, \alpha_n} \big(  x_1, \ldots, x_{i-1}, \sum_{s=1}^l (\theta_l)^{[s]}_{\alpha_i, \ldots, \alpha_{i+l-1}} (x_i, \ldots, x_{i+l-1}), \ldots, x_n  \big) \nonumber \\
&= \sum_{r=1}^{i-1} (-1)^{|x_1| + \cdots + |x_{i-1}|}~ (\theta_k)^{[r]}_{\alpha_1, \ldots, \alpha_i \cdots \alpha_{i+l-1}, \ldots, \alpha_n} \big(  x_1, \ldots, x_{i-1}, (\nu_l)_{\alpha_i, \ldots, \alpha_{i+l-1}} (x_i, \ldots, x_{i+l-1}), \ldots, x_n  \big). \label{some-identity1}
\end{align}
Similarly,
\begin{align}
&\sum_{r=i}^{i+l-1} (\theta_k  \diamond_i \theta_l)^{[r]}_{\alpha_1, \ldots, \alpha_n} (x_1, \ldots, x_n) \nonumber \\
&= (-1)^{|x_1| + \cdots + |x_{i-1}|}~ (\theta_k)^{[i]}_{\alpha_1, \ldots, \alpha_i \cdots \alpha_{i+l-1}, \ldots, \alpha_n} \big(    x_1, \ldots, x_{i-1}, \sum_{s=1}^l (\theta_l)^{[s]}_{\alpha_i, \ldots, \alpha_{i+l-1}} (x_i, \ldots, x_{i+l-1}), \ldots, x_n \big)    \nonumber \\
&= (-1)^{|x_1| + \cdots + |x_{i-1}|}~ (\theta_k)^{[i]}_{\alpha_1, \ldots, \alpha_i \cdots \alpha_{i+l-1}, \ldots, \alpha_n} \big(    x_1, \ldots, x_{i-1}, (\nu_l)_{\alpha_i, \ldots, \alpha_{i+l-1}} (x_i, \ldots, x_{i+l-1}), \ldots, x_n \big)    \label{some-identity2}
\end{align}
and
\begin{align}
&\sum_{r=i+l}^{n} (\theta_k  \diamond_i \theta_l)^{[r]}_{\alpha_1, \ldots, \alpha_n} (x_1, \ldots, x_n) \nonumber \\
&= \sum_{r=i+1}^{k} (-1)^{|x_1| + \cdots + |x_{i-1}|}~ (\theta_k)^{[r]}_{\alpha_1, \ldots, \alpha_i \cdots \alpha_{i+l-1}, \ldots, \alpha_n} \big(  x_1, \ldots, x_{i-1}, \sum_{s=1}^l (\theta_l)^{[s]}_{\alpha_i, \ldots, \alpha_{i+l-1}} (x_i, \ldots, x_{i+l-1}), \ldots, x_n  \big) \nonumber \\
&= \sum_{r=i+1}^{k} (-1)^{|x_1| + \cdots + |x_{i-1}|}~ (\theta_k)^{[r]}_{\alpha_1, \ldots, \alpha_i \cdots \alpha_{i+l-1}, \ldots, \alpha_n} \big(  x_1, \ldots, x_{i-1}, (\nu_l)_{\alpha_i, \ldots, \alpha_{i+l-1}} (x_i, \ldots, x_{i+l-1}), \ldots, x_n  \big).
 \label{some-identity3}
\end{align}
Substituting the values of (\ref{some-identity1}), (\ref{some-identity2}) and (\ref{some-identity3}) in the identity (\ref{r1n}), we get that $(D, \{ \nu_k \}_{k \geq 1})$ is an $\Omega$-$A_\infty$-algebra.
\end{proof}

In the following, we find some relations between strict homotopy relative Rota-Baxter family algebras and Dend$_\infty$-family algebras.

\begin{thm}
Let $\big( (A, \{ \mu_k \}_{k \geq 1}), (M, \{ \eta_k \}_{k \geq 1}), \{ R_\alpha \}_{\alpha \in \Omega} \big)$ be a strict homotopy relative Rota-Baxter family algebra. Then $(M, \{ \theta_k \}_{k \geq 1})$ is a Dend$_\infty$-family algebra, where
\begin{align*}
(\theta_k)^{[r]}_{\alpha_1, \ldots, \alpha_k} (u_1, \ldots, u_k) = \eta_k \big(   R_{\alpha_1} (u_1), \ldots, u_r, \ldots, R_{\alpha_k} (u_k) \big),
\end{align*}
for $k \geq 1$, $[r] \in C_k$, $\alpha_1, \ldots, \alpha_k \in \Omega$ and $u_1, \ldots, u_k \in M.$
\end{thm}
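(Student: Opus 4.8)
The plan is to show that, for each fixed superscript $[r]$, the Dend$_\infty$-family identity (\ref{hom-dend-fam-iden}) is nothing but the representation form of the homotopy associativity identity (\ref{homotopy-ass-iden}), evaluated on the particular string of inputs $(R_{\alpha_1}(u_1), \ldots, u_r, \ldots, R_{\alpha_n}(u_n))$, in which $u_r \in M$ is the unique input coming from the representation and every other entry $R_{\alpha_j}(u_j)$ lies in $A$. Since $(M, \{\eta_k\})$ is a representation of the $A_\infty$-algebra $(A, \{\mu_k\})$, this identity holds, and the theorem follows.

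First I would record the easy structural checks. Because each $\eta_k$ has degree $1$ and each $R_\alpha$ has degree $0$, the composite $(\theta_k)^{[r]}_{\alpha_1,\ldots,\alpha_k}$ has degree $1$; moreover the $r$-th slot of $(\theta_k)^{[r]}$ receives $u_r$ itself rather than $R_{\alpha_r}(u_r)$, so $(\theta_k)^{[r]}_{\alpha_1,\ldots,\alpha_k}$ is independent of $\alpha_r$. Hence each $\theta_k$ indeed belongs to $\mathrm{Hom}^1_{\widehat{\Omega}}({\bf k}[C_k] \otimes M^{\otimes k}, M)$, as required by the definition of a Dend$_\infty$-family algebra.

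The heart of the argument is the expansion of the left-hand side of (\ref{hom-dend-fam-iden}) through the three cases of the diamond operation $\theta_k \diamond_i \theta_l$. When $i \le r \le i+l-1$, that is, when the distinguished slot $r$ falls inside the inner block, the inner factor carries the single superscript $[r-i+1]$, and substituting the definition of $\theta$ yields $\eta_k\big(R_{\alpha_1}(u_1), \ldots, \eta_l(R_{\alpha_i}(u_i), \ldots, u_r, \ldots, R_{\alpha_{i+l-1}}(u_{i+l-1})), \ldots, R_{\alpha_n}(u_n)\big)$, with everything $R$-ed except the slot $r$ sitting inside $\eta_l$. When $r$ lies outside the inner block (the cases $1 \le r \le i-1$ and $i+l \le r \le n$), the inner factor appears as the full sum $\sum_{s=1}^l (\theta_l)^{[s]}_{\alpha_i,\ldots,\alpha_{i+l-1}}$, and this sum is fed into an $R$-ed slot of the outer $\theta_k$, namely the slot carrying the contracted label $\alpha_i \cdots \alpha_{i+l-1}$. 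Here I would invoke the strict Rota-Baxter family identity (\ref{strict-iden}): by linearity of $R$ it says precisely that $R_{\alpha_i\cdots\alpha_{i+l-1}}\big(\sum_{s=1}^l \eta_l(R_{\alpha_i}(u_i),\ldots, u_{i+s-1}, \ldots, R_{\alpha_{i+l-1}}(u_{i+l-1}))\big)$ equals $\mu_l\big(R_{\alpha_i}(u_i), \ldots, R_{\alpha_{i+l-1}}(u_{i+l-1})\big)$. After this substitution the term becomes $\eta_k\big(R_{\alpha_1}(u_1), \ldots, \mu_l(R_{\alpha_i}(u_i), \ldots, R_{\alpha_{i+l-1}}(u_{i+l-1})), \ldots, u_r, \ldots, R_{\alpha_n}(u_n)\big)$, with the sole un-$R$-ed slot $u_r$ now residing in the outer $\eta_k$.

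Finally I would assemble the pieces. The terms of the first type are exactly the summands of the representation identity (\ref{homotopy-ass-iden}) in which the inner operation $\eta_l$ contains the $M$-input $u_r$, while the terms of the second type are exactly the summands in which the inner operation $\mu_l$ acts on a consecutive block of $A$-inputs not containing $u_r$; together they exhaust all summands of (\ref{homotopy-ass-iden}) for the string $(R_{\alpha_1}(u_1), \ldots, u_r, \ldots, R_{\alpha_n}(u_n))$. The signs also match: since $R$ has degree $0$ one has $|R_{\alpha_j}(u_j)| = |u_j|$, so the prefactor $(-1)^{|u_1|+\cdots+|u_{i-1}|}$ in the diamond operation coincides with the Koszul sign $(-1)^{|R_{\alpha_1}(u_1)|+\cdots+|R_{\alpha_{i-1}}(u_{i-1})|}$ of (\ref{homotopy-ass-iden}). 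Thus the left-hand side of (\ref{hom-dend-fam-iden}) vanishes for every $[r]$, proving that $(M, \{\theta_k\})$ is a Dend$_\infty$-family algebra. The step demanding the most care is the index bookkeeping in the outside-block cases: one must check that the un-$R$-ed outer slot really is $u_r$ (for $i+l \le r \le n$ the outer superscript is $[r-l+1]$, which after reindexing the outer inputs lands back on $u_r$), and that the $\widehat{\Omega}$-independence of $(\theta_k)^{[i]}$ on its $i$-th argument is precisely what legitimises ignoring the contracted label $\alpha_i\cdots\alpha_{i+l-1}$ in the inside-block case.
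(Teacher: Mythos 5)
Your proposal is correct and follows essentially the same route as the paper's proof: the same three-case analysis on the position of $r$ relative to the inner block, the same use of the strict identity (\ref{strict-iden}) (in its summed form) to exchange $\mu_l\big(R_{\alpha_i}(u_i),\ldots,R_{\alpha_{i+l-1}}(u_{i+l-1})\big)$ with $R_{\alpha_i\cdots\alpha_{i+l-1}}\big(\sum_s(\theta_l)^{[s]}\big)$, and the same final appeal to the representation identity for $(M,\{\eta_k\})$ evaluated on the string $(R_{\alpha_1}(u_1),\ldots,u_r,\ldots,R_{\alpha_n}(u_n))$. The only difference is cosmetic—you read each chain of equalities from the diamond term toward the $\eta_k$--$\mu_l$ term while the paper reads it in the reverse direction—and your added checks (degree, $\widehat{\Omega}$-independence, sign matching via $|R_{\alpha_j}(u_j)|=|u_j|$) are correct details the paper leaves implicit.
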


\begin{proof}
Since the collection $\{ R_\alpha \}_{\alpha \in \Omega}$ satisfies (\ref{strict-iden}), it follows that
\begin{align}
\mu_k \big(  R_{\alpha_1} (u_1) , \ldots, R_{\alpha_k} (u_k) \big) = R_{\alpha_1 \cdots \alpha_k} \big(  \sum_{r=1}^k (\theta_k)^{[r]}_{\alpha_1, \ldots, \alpha_k} (u_1, \ldots, u_k)   \big),
\end{align}
for all $k \geq 1$, $\alpha_1, \ldots, \alpha_k \in \Omega$ and $u_1, \ldots, u_k \in M$.  We consider the elements $R_{\alpha_1} (u_1), \ldots, u_r, \ldots, R_{\alpha_n} (u_n)$ in which exactly one element $u_r$ is from $M$ and all the remaining elements are from $A$. Let $k, l, i$ be fixed natural numbers with $k+l=n+1$ and $1 \leq i \leq k$. If $1 \leq r \leq i-1$, then
\begin{align*}
&(-1)^{|u_1| + \cdots + |u_{i-1}|} ~ \eta_k \big(  R_{\alpha_1} (u_1), \ldots, u_r, \ldots, R_{\alpha_{i-1}} (u_{i-1}) , \mu_l \big( R_{\alpha_i} (u_i), \ldots, R_{\alpha_{i+l-1}} (u_{i+l-1})   \big), \ldots, R_{\alpha_n} (u_n)  \big) \\
&= (-1)^{|u_1| + \cdots + |u_{i-1}|} ~ \eta_k \big(  R_{\alpha_1} (u_1), \ldots, u_r, \ldots, R_{\alpha_i \cdots \alpha_{i+l-1}} \big( \sum_{s=1}^l (\theta_l)^{[s]}_{\alpha_i , \ldots, \alpha_{i+l-1}} (u_i, \ldots, u_{i+l-1})  \big), \ldots, R_{\alpha_n} (u_n) \big)  \\
&= (-1)^{|u_1| + \cdots + |u_{i-1}|} ~ (\theta_k)^{[r]}_{\alpha_1, \ldots, \alpha_i \cdots \alpha_{i+l-1}, \ldots, \alpha_n} \big( u_1, \ldots, \sum_{s=1}^l (\theta_l)^{[s]}_{\alpha_i, \ldots, \alpha_{i+l-1}} (u_i, \ldots, u_{i+l-1}), \ldots, u_n   \big) \\
&= (\theta_k \diamond_i \theta_l)^{[r]}_{\alpha_1, \ldots, \alpha_n} (u_1, \ldots, u_n).
\end{align*}
Similarly, if $i \leq r \leq i+l-1$ then
\begin{align*}
&(-1)^{|u_1| + \cdots + |u_{i-1}|} ~ \eta_k \big(  R_{\alpha_1} (u_1), \ldots, R_{\alpha_{i-1}} (u_{i-1}) , \eta_l \big( R_{\alpha_i} (u_i), \ldots, u_r, \ldots, R_{\alpha_{i+l-1}} (u_{i+l-1})   \big), \ldots, R_{\alpha_n} (u_n)  \big) \\
&= (-1)^{|u_1| + \cdots + |u_{i-1}|} ~ \eta_k \big(  R_{\alpha_1} (u_1), \ldots, (\theta_l)^{[r-i+1]}_{\alpha_i, \ldots, \alpha_{i+l-1}} (u_i, \ldots, u_{i+l-1}), \ldots, R_{\alpha_n} (u_n) \big)  \\
&= (-1)^{|u_1| + \cdots + |u_{i-1}|} ~ (\theta_k)^{[i]}_{\alpha_1, \ldots, \alpha_i \cdots \alpha_{i+l-1}, \ldots, \alpha_n} \big( u_1, \ldots, u_{i-1}, (\theta_l)^{[r-i+1]}_{\alpha_i, \ldots, \alpha_{i+l-1}} (u_i, \ldots, u_{i+l-1}), \ldots, u_n   \big) \\
&= (\theta_k \diamond_i \theta_l)^{[r]}_{\alpha_1, \ldots, \alpha_n} (u_1, \ldots, u_n)
\end{align*}
and if $i + l \leq r \leq n$, then
\begin{align*}
&(-1)^{|u_1| + \cdots + |u_{i-1}|} ~ \eta_k \big(  R_{\alpha_1} (u_1),  \ldots, R_{\alpha_{i-1}} (u_{i-1}) , \mu_l \big( R_{\alpha_i} (u_i), \ldots, R_{\alpha_{i+l-1}} (u_{i+l-1})   \big), \ldots, u_r, \ldots, R_{\alpha_n} (u_n)  \big) \\
&= (-1)^{|u_1| + \cdots + |u_{i-1}|} ~ \eta_k \big(  R_{\alpha_1} (u_1),  \ldots, R_{\alpha_i \cdots \alpha_{i+l-1}} \big( \sum_{s=1}^l (\theta_l)^{[s]}_{\alpha_i , \ldots, \alpha_{i+l-1}} (u_i, \ldots, u_{i+l-1})  \big), \ldots, u_r, \ldots, R_{\alpha_n} (u_n) \big)  \\
&= (-1)^{|u_1| + \cdots + |u_{i-1}|} ~ (\theta_k)^{[r-l+1]}_{\alpha_1, \ldots, \alpha_i \cdots \alpha_{i+l-1}, \ldots, \alpha_n} \big( u_1, \ldots, u_{i-1}, \sum_{s=1}^l (\theta_l)^{[s]}_{\alpha_i, \ldots, \alpha_{i+l-1}} (u_i, \ldots, u_{i+l-1}), \ldots, u_n   \big) \\
&= (\theta_k \diamond_i \theta_l)^{[r]}_{\alpha_1, \ldots, \alpha_n} (u_1, \ldots, u_n).
\end{align*}
Therefore, in any case, we obtain the identities (\ref{hom-dend-fam-iden}) as the pair $(M, \{ \eta_k \}_{k \geq 1})$ is a representation of the $A_\infty$-algebra $(A, \{ \mu_k \}_{k \geq 1})$.
\end{proof}

Next, we show that a Dend$_\infty$-family algebra has an associated homotopy relative Rota-Baxter family algebra. Moreover, the induced Dend$_\infty$-family algebra structure coincides with the given one.

\begin{thm}
(i) Let $(D, \{ \theta_k \}_{k \geq 1})$ be a Dend$_\infty$-family algebra. Then $(D \otimes {\bf k} \Omega, \{ \overline{\theta}_k \}_{k \geq 1})$ is a Dend$_\infty$-algebra, where
\begin{align*}
(\overline{\theta}_k)^{[r]} \big( x_1 \otimes \alpha_1, \ldots, x_k \otimes \alpha_k \big) := (\theta_k)^{[r]}_{\alpha_1, \ldots, \alpha_k} (x_1, \ldots, x_k) \otimes \alpha_1 \cdots \alpha_k,
\end{align*}
for $x_1 \otimes \alpha_1, \ldots, x_k \otimes \alpha_k \in D \otimes {\bf k} \Omega$ and $[r] \in C_k$.

(ii) The pair $(D \otimes {\bf k} \Omega, \{ \mu_k \}_{k \geq 1})$ is an $A_\infty$-algebra, where
\begin{align*}
\mu_k \big(  x_1 \otimes \alpha_1, \ldots, x_k \otimes \alpha_k \big) := \sum_{r=1}^k (\theta_k)^{[r]}_{\alpha_1, \ldots, \alpha_k} (x_1, \ldots, x_k) \otimes \alpha_1 \cdots \alpha_k, \text{ for } x_i \otimes \alpha_i \in D \otimes {\bf k} \Omega.
\end{align*}
Moreover, $(D, \{ \eta_k \}_{k \geq 1})$ is a representation of the $A_\infty$-algebra $(D \otimes {\bf k} \Omega, \{ \mu_k \}_{k \geq 1})$, where
\begin{align*}
\eta_k \big(  x_1 \otimes \alpha_1, \ldots, x_r , \ldots, x_k \otimes \alpha_k \big) := (\theta_k)^{[r]}_{\alpha_1, \ldots, \alpha_k} (x_1, \ldots, x_k),
\end{align*}
for $x_i \otimes \alpha_i \in D \otimes {\bf k} \Omega$ $(i=1, \ldots, r-1, r+1, \ldots, k)$ and $x_r \in D$. On the right-hand side of the above defining identity, we take $\alpha_r$ to be any element of $\Omega$.

(iii) The triple $\big(  (D \otimes {\bf k} \Omega , \{ \mu_k \}_{k \geq 1}), (D, \{ \eta_k \}_{k \geq 1}), \{ R_\alpha \}_{\alpha \in \Omega}  \big)$ is a strict homotopy relative Rota-Baxter family algebra, where
\begin{align*}
R_\alpha : D \rightarrow D \otimes {\bf k} \Omega,~ R_\alpha (x) = x \otimes \alpha, ~ \text{ for } \alpha \in \Omega, x \in D.
\end{align*}
Further, the induced Dend$_\infty$-family algebra structure on $D$ coincides with the given one.
\end{thm}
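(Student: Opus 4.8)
The plan is to handle the three parts in order, leaning on the two propositions already established (that a Dend$_\infty$-family algebra yields an $\Omega$-$A_\infty$-algebra, and that an $\Omega$-$A_\infty$-algebra yields an ordinary $A_\infty$-algebra after tensoring with ${\bf k} \Omega$) together with the preceding theorem that a strict homotopy relative Rota-Baxter family algebra induces a Dend$_\infty$-family algebra. The single computational input feeding all three parts is the observation that, for each of the defining operations, the semigroup labels propagate through the partial compositions exactly as in the family identities, while the factor ${\bf k} \Omega$ records the running product of all the labels.

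For part (i) I would establish directly the compatibility of partial compositions
\[
(\overline{\theta}_k \diamond_i \overline{\theta}_l)^{[r]} (x_1 \otimes \alpha_1, \ldots, x_n \otimes \alpha_n) = (\theta_k \diamond_i \theta_l)^{[r]}_{\alpha_1, \ldots, \alpha_n} (x_1, \ldots, x_n) \otimes \alpha_1 \cdots \alpha_n .
\]
The crux is that the inner operation $(\overline{\theta}_l)^{[s]} (x_i \otimes \alpha_i, \ldots, x_{i+l-1} \otimes \alpha_{i+l-1})$ produces an element carrying the label $\alpha_i \cdots \alpha_{i+l-1}$, which is exactly the merged label appearing in the family composition $(\theta_k)^{[r]}_{\alpha_1, \ldots, \alpha_i \cdots \alpha_{i+l-1}, \ldots, \alpha_n}$; feeding it into $\overline{\theta}_k$ then multiplies this merged label with the remaining ones to recover $\alpha_1 \cdots \alpha_n$. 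I would verify this in each of the three ranges $1 \le r \le i-1$, $i \le r \le i+l-1$ and $i+l \le r \le n$, noting that the Koszul signs coincide automatically since $|x_j \otimes \alpha_j| = |x_j|$. Summing over $k+l = n+1$ and $1 \le i \le k$ and invoking the family identity (\ref{hom-dend-fam-iden}) at the fixed tuple $\alpha_1, \ldots, \alpha_n$ yields the Dend$_\infty$-algebra identities for $\{ \overline{\theta}_k \}$.

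Part (ii) follows with little additional effort. Since $\mu_k = \sum_{r=1}^k (\overline{\theta}_k)^{[r]}$ is precisely the total operation of the Dend$_\infty$-algebra produced in part (i), the $A_\infty$-identities for $\{ \mu_k \}$ are immediate from the fact that Dend$_\infty$-algebras split $A_\infty$-algebras; alternatively one reads them off by composing the two propositions (Dend$_\infty$-family $\to$ $\Omega$-$A_\infty$ $\to$ $A_\infty$). For the representation statement I would rerun the computation of part (i) with exactly one entry drawn from $D$ rather than $D \otimes {\bf k} \Omega$, so that the relevant outer or inner operation becomes $\eta_k$; the same label bookkeeping shows that the representation identity, for the configuration with the distinguished $D$-input in position $r$, reduces exactly to the family identity (\ref{hom-dend-fam-iden}) for the marker $[r]$.

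Finally, for part (iii) I would check the strict identity (\ref{strict-iden}) by substituting $R_\alpha (x) = x \otimes \alpha$: the left-hand side equals $\mu_k (x_1 \otimes \alpha_1, \ldots, x_k \otimes \alpha_k) = \sum_r (\theta_k)^{[r]}_{\alpha_1, \ldots, \alpha_k} (x_1, \ldots, x_k) \otimes \alpha_1 \cdots \alpha_k$, while each summand on the right equals $R_{\alpha_1 \cdots \alpha_k} \big( \eta_k ( \ldots, u_r, \ldots ) \big) = (\theta_k)^{[r]}_{\alpha_1, \ldots, \alpha_k} (u_1, \ldots, u_k) \otimes \alpha_1 \cdots \alpha_k$, so the two sides agree termwise. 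For the concluding assertion, I would substitute $R_\alpha (x) = x \otimes \alpha$ into the formula $(\theta_k)^{[r]}_{\alpha_1, \ldots, \alpha_k} (u_1, \ldots, u_k) = \eta_k \big( R_{\alpha_1} (u_1), \ldots, u_r, \ldots, R_{\alpha_k} (u_k) \big)$ from the previous theorem; by the definition of $\eta_k$ this returns exactly the original $(\theta_k)^{[r]}_{\alpha_1, \ldots, \alpha_k}$, so the induced Dend$_\infty$-family structure coincides with the given one. I expect the only genuine obstacle to be the careful index and label bookkeeping in part (i), tracking how the merged label $\alpha_i \cdots \alpha_{i+l-1}$ lands in the correct slot and how the running product of labels is preserved across all three ranges of the marker $[r]$; once that composition identity is in hand, parts (ii) and (iii) are essentially formal.
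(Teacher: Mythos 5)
Your proposal is correct and matches the paper's proof in all essentials: part (i) by the label-bookkeeping computation showing the family identities transport across the tensor with ${\bf k}\Omega$ (the paper delegates this to its earlier proposition on $\Omega$-$A_\infty$-algebras), part (ii) by invoking the splitting of the Dend$_\infty$-structure into the $A_\infty$-structure and reducing the representation conditions to the family identities, and part (iii) by the same termwise substitution of $R_\alpha(x) = x \otimes \alpha$ into the strict identity and into the induced-structure formula. No gaps; your extra explicitness in the three-range case analysis of part (i) is exactly what the paper's citation of its Proposition on $\Omega$-$A_\infty$-algebras implicitly covers.
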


\begin{proof}
(i) The proof is similar to the proof of Proposition \ref{omega-to-ordinary}. Hence we will not repeat it here.

(ii) Since $(D \otimes {\bf k} \Omega , \{ \overline{\theta}_k \}_{k \geq 1}  )$ is a Dend$_\infty$-algebra, it follows that $(D \otimes {\bf k} \Omega, \{ \mu_k \}_{k \geq 1})$ is an $A_\infty$-algebra, where
\begin{align*}
\mu_k = \sum_{r=1}^k (\overline{\theta}_k)^{[r]}, \text{ for } k \geq 1. 
\end{align*}
This proves the first part. The second part is easy to verify as the conditions for representation are equivalent to the Dend$_\infty$-family algebra conditions (\ref{hom-dend-fam-iden}).

(iii) For any $\alpha_1, \ldots, \alpha_k \in \Omega$ and $x_1, \ldots, x_k \in D$, we have
\begin{align*}
\mu_k \big(  R_{\alpha_1} (x_1), \ldots, R_{\alpha_k} (x_k) \big) =~& \mu_k \big( x_1 \otimes \alpha_1, \ldots, x_k \otimes \alpha_k   \big) \\
=~& \sum_{r=1}^k \big(  (\theta_k)^{[r]}_{\alpha_1, \ldots, \alpha_k} (x_1, \ldots, x_k)  \big) \otimes \alpha_1 \cdots \alpha_k \\
=~& \sum_{r=1}^k R_{\alpha_1 \cdots \alpha_k} \big(    (\theta_k)^{[r]}_{\alpha_1, \ldots, \alpha_k} (x_1, \ldots, x_k)    \big) \\
=~& \sum_{r=1}^k R_{\alpha_1 \cdots \alpha_k} \big(  \eta_k \big(  R_{\alpha_1} (x_1), \ldots, x_r, \ldots, R_{\alpha_k} (x_k)  \big)    \big).
\end{align*}
This completes the first part. Finally, if $(D, \{  {\theta}'_k  \}_{k \geq 1})$ is the induced Dend$_\infty$-family algebra, then
\begin{align*}
({\theta}'_k)^{[r]}_{\alpha_1, \ldots, \alpha_k} (x_1, \ldots, x_k) = \eta_k \big( R_{\alpha_1} (x_1), \ldots, x_r, \ldots, R_{\alpha_k} (x_k) \big) = (\theta_k)^{[r]}_{\alpha_1, \ldots, \alpha_k} (x_1, \ldots, x_k).
\end{align*}
Hence the result follows.
\end{proof}

\medskip

\noindent {\bf Acknowledgements.} 
The author would like to thank Indian Institute of Technology (IIT) Kharagpur for providing a beautiful academic environment where the research has been carried out.

\medskip

\noindent {\bf Data Availability Statement.} Data sharing is not applicable to this article as no new data were created or analyzed in this study.

\end{document}